\DeclareMathOperator{\Min}{Min}
\newcommand{\MMin}[1]{\Min(#1)}
\newcommand{\mmin}[1]{\min(#1)}
\DeclareMathOperator{\Max}{Max}
\newcommand{\MMax}[1]{\Max(#1)}
\newcommand{\mmax}[1]{\max(#1)}
\newcommand{\bbN}{\mathbb{N}}
\newcommand{\frR}{\mathfrak{R}}
\newcommand{\frS}{\mathfrak{S}}
\newcommand{\tpt}{$\mathbf{2\!+\!2}$}
\newcommand{\tptp}{(\tpt)}
\newcommand{\tpo}{$\mathbf{3\!+\!1}$}
\newcommand{\tpop}{(\tpo)}
\newcommand{\tpto}{(\tpt,\,\tpo)}
\newcommand{\tptn}{(\tpt,\,\N)}
\newcommand{\N}{$\mathbf{N}$}
\newcommand{\Co}{\mathcal{C}^{I}} 
\newcommand{\Ct}{\mathcal{C}^{II}} 
\newcommand{\D}{\mathcal{D}} 
\newcommand{\F}{\mathcal{F}}
\newcommand{\M}{\mathcal{M}} 
\newcommand{\co}{{C}^{I}}
\newcommand{\ct}{{C}^{II}} 
\newcommand{\cl}{\prec}
\newcommand{\cP}{\mathcal{P}}
\newcommand{\cA}{\mathcal{A}} 
\DeclareMathOperator{\asc}{asc}
\DeclareMathOperator{\des}{des}
\DeclareMathOperator{\ret}{ret}
\DeclareMathOperator{\pea}{pea}
\newcommand{\pat}{\raisebox{-0.2em}{\includegraphics[height=1em]{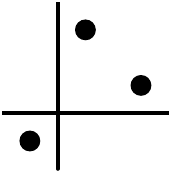}}}
\newcommand{\ppat}{\left(\pat\right)}
\DeclareMathOperator{\LRmax}{LRmax}
\DeclareMathOperator{\RLmax}{RLmax}
\DeclareMathOperator{\LRmin}{LRmin}
\DeclareMathOperator{\RLmin}{RLmin}
\DeclareMathOperator{\Av}{Av}
\newtheorem{theorem}{Theorem}[section]
\newtheorem{lemma}[theorem]{Lemma}
\newtheorem{observation}[theorem]{Observation}
\newtheorem{fact}[theorem]{Fact}
\newtheorem{proposition}[theorem]{Proposition}
\newtheorem{conjecture}[theorem]{Conjecture}
\theoremstyle{definition}
\newtheorem{definition}[theorem]{Definition}
\theoremstyle{remark}
\newtheorem{problem}{Problem}
\title{Catalan Pairs and Fishburn Triples}
\author{Vít Jelínek\thanks{
Computer Science Institute, Faculty of Mathematics and Physics,
Charles University, Malostranské náměstí 25, 118 00, Prague, 
\texttt{jelinek@iuuk.mff.cuni.cz}.
Supported by project CE-ITI (GBP202/12/G061) of the
Czech Science
Foundation.}
}
\begin{document}
\maketitle

\begin{abstract}
Disanto, Ferrari, Pinzani and Rinaldi have introduced the concept of
\emph{Catalan pair}, which is a pair of partial orders $(S,R)$ satisfying
certain axioms. They have shown that Catalan pairs provide a natural description
of objects belonging to several classes enumerated by Catalan numbers. 

In this paper, we first introduce another axiomatic structure $(T,R)$, which we
call the \emph{Catalan pair of type 2}, which describes certain Catalan
objects that do not seem to have an easy interpretation in terms of the original
Catalan pairs.

We then introduce \emph{Fishburn triples}, which are relational structures
obtained as a direct common generalization of the two types of Catalan pairs.
Fishburn triples encode, in a natural way, the structure of objects enumerated
by the Fishburn numbers, such as interval orders or Fishburn matrices. This
connection between Catalan objects and Fishburn objects allows us to associate
known statistics on Catalan objects with analogous statistics of Fishburn
objects. As our main result, we then show that several known equidistribution
results on Catalan statistics can be generalized to analogous results for
Fishburn statistics.
\end{abstract}


\section{Introduction}
The Catalan numbers $C_n=\frac{1}{n+1}\binom{2n}{n}$ are one of the most
ubiquitous number sequences in enumerative combinatorics. As of May 2013,
Stanley's Catalan Addendum~\cite{addendum} includes over 200 examples of
classes of combinatorial objects enumerated by Catalan numbers, and more
examples are constantly being discovered. 

The Fishburn numbers $F_n$ (sequence A022493 in OEIS~\cite{oeis}) are another
example of a counting sequence that arises in several seemingly unrelated
contexts. The first widely studied combinatorial class enumerated by Fishburn
numbers is the class of interval orders, also known as \tptp-free posets. Their
study was pioneered by
Fishburn~\cite{FishburnPrvni,FishburnDM,fishburn1985interval}. Later, more
objects counted by Fishburn numbers were identified, including
non-neighbor-nesting matchings~\cite{Stoimenow}, ascent sequences~\cite{BMCDK},
Fishburn matrices~\cite{dukes2010ascent,DJK}, several classes of
pattern-avoiding permutations~\cite{BMCDK,bivincular} or pattern-avoiding
matchings~\cite{Levande}.

It has been observed by several authors that various Fishburn classes contain
subclasses enumerated by Catalan numbers~\cite{DisantoAAM,CMS,DS,KimRoush}.
For instance, the Fishburn class of \tptp-free posets contains the subclasses
of \tpto-free posets and \tptn-free posets, which are both enumerated
by Catalan numbers. The aim of this paper is to describe a close relationship
between Catalan and Fishburn classes, of which the above-mentioned inclusions
are a direct consequence. 

At first sight, Catalan numbers and Fishburn numbers do not seem to have much
in common. The Catalan numbers can be expressed a simple formula
$C_n=\frac{1}{n+1}\binom{2n}{n}$, they have a simply exponential
asymptotic growth $C_n=\Theta(4^n n^{-3/2})$, and admit an algebraic generating
function, namely $(1-\sqrt{1-4x})/2$. In contrast, no simple formula for the
Fishburn numbers $F_n$ is known, their growth is superexponential
($F_n=\Theta(n!(6/\pi)^n\sqrt{n})$ as shown by Zagier~\cite{Zagier}), and their
generating function $\sum_{n\ge0}\prod_{k=1}^n (1-(1-x)^k)$, derived by
Zagier~\cite{Zagier}, is not even D-finite~\cite{BMCDK}.

Nevertheless, we will show that there is a close combinatorial relationship
between families of objects counted by Catalan numbers and those counted by
Fishburn numbers. To describe this relationship, it is convenient to represent
Catalan and Fishburn objects by relational structures satisfying certain
axioms. 

An example of such a structure are the so-called Catalan pairs, introduced by
Disanto, Ferrari, Pinzani and Rinaldi~\cite{DisantoAAM,DisantoENDM} (see
also~\cite{Bilotta}). In this paper, we first introduce another
Catalan-enumerated family of relational structures which we call Catalan pairs
of type 2. Next, we introduce a common generalization of the two types of
Catalan pairs, which we call Fishburn triples. 

This interpretation of Catalan objects and Fishburn objects by means of
relational structures allows us to detect a correspondence between known
combinatorial statistics on Catalan objects and their Fishburn counterparts.
This allows us to discover new equidistribution results for Fishburn statistics,
inspired by analogous previously known results for statistics of Catalan
objects.

In this paper, we need a lot of preparation before we can state and prove the
main results. After recalling some basic notions related to posets and 
relational structures (Subsection~\ref{ssec-relations}), we introduce 
interval orders and Fishburn matrices, and characterize \N-free 
and \tpop-free interval orders in terms of their Fishburn matrices 
(Subsection~\ref{ssec-matrices}). In Section~\ref{sec-catpairs}, we define 
Catalan pairs of type 1 and 2, which are closely related to \N-free and 
\tpop-free interval orders, respectively. We then observe that several familiar
Catalan statistics have a natural interpretation in terms of these pairs. 
Finally, in Section~\ref{sec-fish}, we introduce Fishburn triples, which are a 
direct generalization of Catalan pairs. We then state and prove our main 
results, which, informally speaking, show that certain equidistribution results 
on Catalan statistics can be generalized to Fishburn statistics.  

\subsection{Orders and Relations}\label{ssec-relations} 

A \emph{relation} (or, more properly, a \emph{binary relation}) on a set $X$ is
an arbitrary subset of the Cartesian product $X\times X$. For a relation
$R\subseteq X\times X$ and a pair of elements $x,y\in X$, we write $xRy$
as a shorthand for $(x,y)\in R$. The \emph{inverse} of $R$, denoted by $R^{-1}$,
is the relation that satisfies $xR^{-1}y$ if and only if $yRx$. Two elements
$x,y\in X$ are \emph{comparable by $R$} (or \emph{$R$-comparable}) if at least
one of the pairs $(x,y)$ and $(y,x)$ belongs to $R$.

A relation $R$ on $X$ is said to be \emph{irreflexive} if no element of $X$ is
comparable to itself. 
A relation $R$ is \emph{transitive} if $xRy$ and $yRz$ implies $xRz$ for each
$x,y,z\in X$. An irreflexive transitive relation is a \emph{partial order}. A
set $X$ together with a partial order relation $R$ on $X$ form a \emph{poset}.

An element $x\in X$ is \emph{minimal} in a relation $R$ (or $R$-minimal for
short) if there is no $y\in X$ such that~$yRx$. Maximal elements are defined
analogously. This definition agrees with the standard notion of minimal elements
in partial orders, but note that we will use this notion even when $R$ is not a
partial order. We let $\MMin{R}$ denote the set of minimal elements of $R$, and
$\mmin{R}$ its cardinality. Similarly, $\MMax{R}$ is the set of $R$-maximal
elements and $\mmax{R}$ its cardinality.

A \emph{relational structure} on a set $X$ is an ordered tuple
$\frR=(R_1,R_2,\dotsc,R_k)$ where each $R_i$ is a relation on $X$. The
relations $R_i$ are referred to as the \emph{components} of $\frR$, and the
cardinality of $X$ is the \emph{order} of~$\frR$. Given $\frR$ as above, and
given another relational structure $\frS=(S_1,\dotsc,S_m)$ on a set $Y$, we say
that \emph{$\frR$ contains $\frS$} (or $\frS$ is a \emph{substructure} of
$\frR$) if $k=m$ and there is an injection $\phi\colon Y\to X$ such that for
every $i=1,\dotsc,k$ and every $x,y\in Y$, we have $xS_i y$ if and only if
$\phi(x)R_i\phi(y)$. In such case we call the mapping $\phi$ an \emph{embedding}
of $\frS$ into $\frR$. If $\frR$ does not contain $\frS$, we say that $\frR$
\emph{avoids} $\frS$, or that $\frR$ is \emph{$\frS$-free}.
We say that $\frR$ and $\frS$ are \emph{isomorphic} if there is an embedding of
$\frS$ into $\frR$ which maps $Y$ bijectively onto~$X$. 

By a slight abuse of terminology, we often identify a relation $R$ of $X$ with a
single-component relational structure $\frR=(R)$. Therefore, e.g., saying
that a relation $R$ \emph{avoids} a relation $S$ means that the relational
structure $(R)$ avoids the relational structure $(S)$. 

\begin{figure}
\hfil\includegraphics[width=0.6\textwidth]{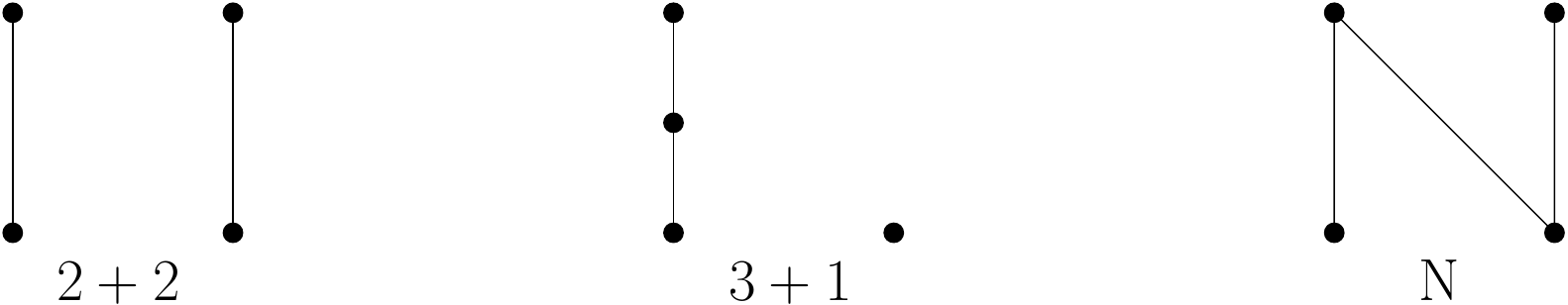}
\caption{The posets \tpt\ (left), \tpo\ (center), and \N\ (right), which will
frequently play the role of forbidden patterns in this paper.}\label{fig-pats}
\end{figure}

Throughout this paper, whenever we deal with the enumeration of relational
structures, we treat them as unlabeled objects, that is, we consider an entire
isomorphism class as a single object.

We assume the reader is familiar with the concept of Hasse diagram of a
partial order. We shall be mainly interested in partial orders that avoid some
of the three orders \tpt, \tpo, and \N, depicted on Figure~\ref{fig-pats}.
These classes of partial orders have been studied before. The \tptp-free posets
are known as \emph{interval orders}, and they are the prototypical example
of a class of combinatorial structures enumerated by Fishburn numbers. The
\N-free posets\footnote{Beware that some authors (e.g. Khamis~\cite{Khamis})
give the term `\N-free poset' a meaning subtly different from ours.} are also
known as \emph{series-parallel posets}. They are exactly the
posets that can be constructed from a single-element poset by a repeated
application of direct sums and disjoint unions. The \tpop-free
posets do not seem to admit such a simple characterization, and their structural
description is the topic of ongoing
research~\cite{Skandera,SkanderaReed,LewisZhang,GPMR}.

The \tpto-free posets, i.e., the posets avoiding both \tpt\ and \tpo, are also
known as \emph{semiorders}, and have been introduced by Luce~\cite{Luce} in the
1950s. They are enumerated by Catalan numbers~\cite{KimRoush}. The \tptn-free
posets are enumerated by Catalan numbers as well, as shown by Disanto et
al.~\cite{DisantoAAM,DPPR13}.

\subsection{Interval orders and Fishburn matrices}\label{ssec-matrices}
Let us briefly summarize several known facts about interval orders and their
representations. A detailed treatment of this topic appears, e.g., in Fishburn's
book~\cite{fishburn1985interval}.

Let $R=(X,\cl)$ be a poset. An \emph{interval representation} of $R$ is a
mapping $I$ that associates to every element $x\in X$ a closed
interval $I(x)=[l_x,r_x]$ in such a way that for any two elements
$x,y\in X$, we have $x\cl y$ if and only if $r_x<l_y$. Note that
we allow the intervals $I(x)$ to be degenerate, i.e., to consist of a single
point.

A poset has an interval representation if and only if it is \tptp-free, i.e., if
it is an interval order. An interval representation $I$ is \emph{minimal}, if it
satisfies these conditions:
\begin{itemize}
 \item for every $x\in X$, the endpoints of $I(x)$ are positive integers
 \item there is a positive integer 
$m\in\bbN$ such that for every
$k\in\{1,2,\dotsc,m\}$ there is an interval $I(x)$ whose right endpoint is $k$,
as well as an interval $I(y)$ whose left endpoint is~$k$. 
\end{itemize}

Each interval order $R$ has a unique minimal interval representation. The
integer $m$, which corresponds to the number of distinct endpoints in the
minimal representation, is known as \emph{the magnitude} of~$R$. Note that
$x\in X$ is a minimal element of $R$ if and only if the left endpoint of
$I(x)$ is equal to 1, and $x$ is a maximal element if and only if the right
endpoint of $I(x)$ is equal to~$m$.

Two elements $x,y$ of a poset $R=(X,\cl)$ are \emph{indistinguishable} if
for every element $z\in X$ we have the equivalences $x\cl z\iff y\cl z$ and
$z\cl x\iff z\cl y$. In an interval order $R$ with a minimal interval
representation~$I$, two elements $x,y$ are indistinguishable if and only if
$I(x)=I(y)$. An interval order is \emph{primitive} if it has no two
indistinguishable elements. Every interval order $R$ can be uniquely
constructed from a primitive interval order $R'$ by replacing each element
$x\in R'$ by a group of indistinguishable elements containing~$x$.

Given a matrix $M$, we use the term \emph{cell $(i,j)$ of $M$} to refer to the
entry in the $i$-th row and $j$-th column of $M$, and we let
$M_{i,j}$ denote its value. We assume that the rows of a matrix are numbered top
to bottom, that is, the top row has number 1. The \emph{weight} of a matrix $M$
is the sum of its entries\footnote{Some earlier papers use the term \emph{size
of $M$} instead of weight of~$M$. However, in other contexts the term `size'
often refers to the number of rows of a matrix. We therefore prefer to use the
less ambiguous term `weight' in this paper.}. Similarly, the weight of a row (or
a column, or a diagonal) of a matrix is the sum of the entries in this row (or
column or diagonal).

\begin{figure}
 \hfil\includegraphics[width=0.7\textwidth]{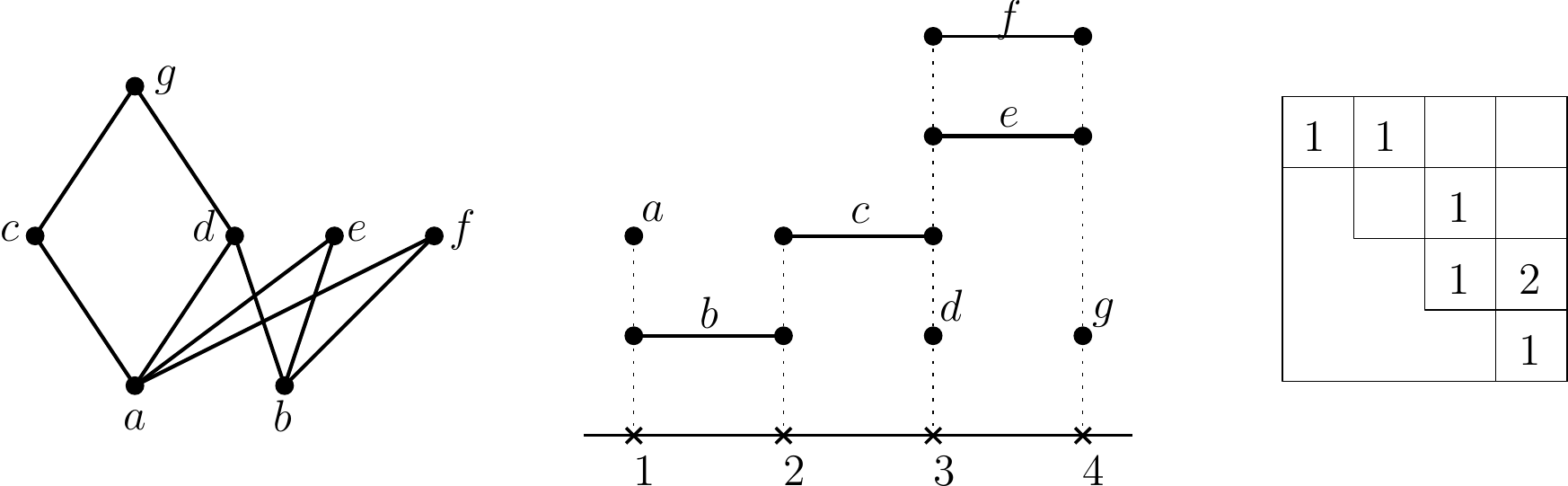}
\caption{Three Fishburn structures: an interval order, its minimal interval
representation, and its Fishburn matrix. In this paper,
we use the convention that cells of value 0 in a matrix are depicted as empty
boxes.}\label{fig-fish}
\end{figure}

A \emph{Fishburn matrix} is an upper-triangular square matrix of nonnegative
integers whose every row and every column has nonzero weight. In other words, an
$m\times m$ matrix $M$ of nonnegative integers is a Fishburn matrix if it
satisfies these conditions:
\begin{itemize}
 \item $M_{i,j}=0$ whenever $i>j$,
 \item for every $i\in\{1,\dotsc,m\}$ there is a $j$ such that $M_{i,j}>0$, and
 \item for every $j\in\{1,\dotsc,m\}$ there is an $i$ such that $M_{i,j}>0$.
\end{itemize}
Fishburn matrices were introduced by Fishburn~\cite{fishburn1985interval} as a
convenient way to represent interval orders. Given an interval order $R=(X,\cl)$
of magnitude $m$ with minimal representation $I$, we represent it by an $m\times
m$ matrix $M$ such that $M_{i,j}$ is equal to $|\{x\in X;\; I(x)=[i,j]\}|$. 
We then say that an element $x\in X$ is \emph{represented} by the cell $(i,j)$
of $M$, if $I(x)=[i,j]$. Thus, every element of $R$ is represented by a unique
cell of $M$, and $M_{i,j}$ is the number of elements of $R$ represented by cell
$(i,j)$ of $M$. This correspondence yields a bijection between Fishburn matrices
and interval orders. For an element $x\in X$, we let $c_x$ denote the cell of
$M$ representing $x$. Conversely, for a cell $c$ of $M$, we let $X_c$ be the set
of elements of $X$ represented by~$c$. Note that the value in cell $c$ is
precisely the cardinality of~$X_c$.

In the rest of this paper, we will use Fishburn matrices as our main family of
Fishburn-enumerated object. We now show how the basic features of interval
orders translate into matrix terminology.
\begin{observation}\label{obs-fish} Let $R=(X,\cl)$ be an interval order, and
let $M$ be the corresponding Fishburn matrix. Let $x$ and $x'$ be two elements
of $R$, represented respectively by cells $(i,j)$ and $(i',j')$ of $M$. The
following
holds:
\begin{itemize}
\item The size of $R$ is equal to the weight of $M$.
\item The number of minimal elements of $R$ is equal to the weight of the
first row of $M$, and the number of maximal elements of $R$ is equal to the
weight of the last column of $M$.
\item The elements $x$ and $x'$ are indistinguishable in $R$ if and only if
they are represented by the same cell of~$M$, i.e., $i=i'$ and $j=j'$.
\item We have $x'\cl x$ if and only if $j'<i$.
\end{itemize}
\end{observation}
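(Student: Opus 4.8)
The plan is to derive all four statements from the single observation that an element $x$ represented by cell $(i,j)$ has minimal interval $I(x)=[i,j]$, so that its left endpoint $l_x$ equals the row index $i$ and its right endpoint $r_x$ equals the column index $j$. Everything else is a matter of translating the relevant interval-order notions through this dictionary, together with the facts established earlier in Subsection~\ref{ssec-matrices}.

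First I would dispose of the last item, which is the most direct: by the defining property of an interval representation, $x'\cl x$ holds if and only if $r_{x'}<l_x$. Substituting $r_{x'}=j'$ and $l_x=i$ immediately yields $x'\cl x\iff j'<i$. Next, for the statement about indistinguishable elements, I would invoke the fact recalled above that in a minimal interval representation two elements are indistinguishable exactly when their intervals coincide. Since $I(x)=[i,j]$ and $I(x')=[i',j']$, the intervals coincide if and only if $(i,j)=(i',j')$, i.e., $x$ and $x'$ occupy the same cell.

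The first two items then follow by summation. Since each element of $X$ lies in exactly one set $X_c$, these sets partition $X$, and as the value in each cell $c$ equals $|X_c|$, the total weight of $M$ equals $|X|$, the size of $R$. For the minimal elements, I would use that $x$ is minimal precisely when $l_x=1$, i.e., when $x$ sits in a cell of the first row; counting such elements gives the weight of the first row. The claim for maximal elements is symmetric, using $r_x=m$ and the last column.

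I do not expect any genuine obstacle here: the content of the observation is entirely the cell--interval dictionary, and once the identifications $l_x=i$ and $r_x=j$ are in place, each item is a one-line verification. The only point demanding care is keeping the two endpoint conventions straight---the row index corresponds to the left endpoint and the column index to the right endpoint---so that the inequality in the fourth item comes out in the correct direction.
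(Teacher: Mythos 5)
Your argument is correct and is exactly the reasoning the paper leaves implicit: the Observation is stated without proof, relying on the dictionary $I(x)=[i,j]$ (so $l_x=i$, $r_x=j$) together with the facts recalled just before it (minimality iff left endpoint $1$, maximality iff right endpoint $m$, indistinguishability iff equal intervals in the minimal representation, and $x'\cl x$ iff $r_{x'}<l_x$). Each of your four verifications matches that intended justification.
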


\begin{figure}
\hfil\includegraphics[width=0.6\textwidth]{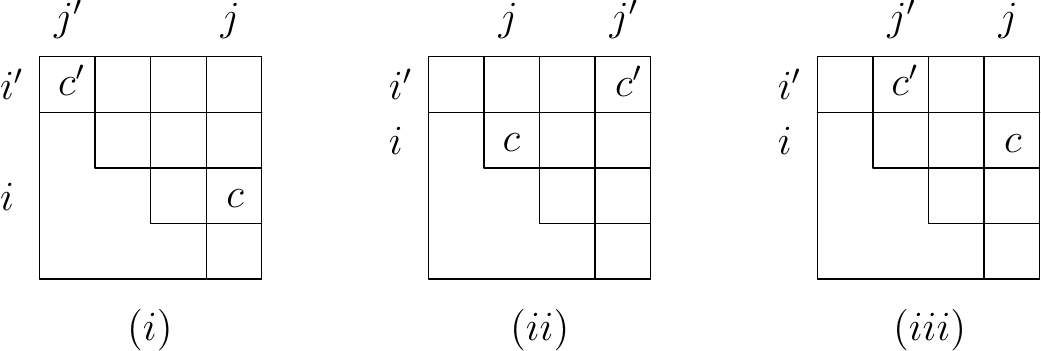}
\caption{Mutual positions of two cells $c$ and $c'$ in a Fishburn matrix: (i)
 $c$ is greater than $c'$, (ii) $c$ is strictly South-West of $c'$, and (iii)
$c$ is strictly South-East of~$c'$.} \label{cellpos}
\end{figure}

Let $M$ be a Fishburn matrix, and let $c=(i,j)$ and $c'=(i',j')$ be two cells 
of $M$ such that $i\le j$ and $i'\le j'$, i.e., the two cells are on or above 
the main diagonal. We shall frequently use the following terminology (see
Figure~\ref{cellpos}):
\begin{itemize}
 \item Cell $c$ is \emph{greater} than cell $c'$ (and $c'$ is \emph{smaller} 
than $c$), if $j'<i$. The cells $c$ and $c'$ are \emph{incomparable} if 
neither of them is greater than the other. These terms are motivated 
by the last part of Observation~\ref{obs-fish}. Note that two cells $c$ and $c'$
of $M$ are comparable if and only if the smallest rectangle containing both $c$
and $c'$ also contains at least one cell strictly below the main diagonal
of~$M$. 
\item Cell $c$ is \emph{South} of $c'$, if $i>i'$ and $j=j'$. North, West and 
East are defined analogously. Note that in all these cases, the two cells $c$ 
and $c'$ are incomparable.
\item Cell $c$ is \emph{strictly South-West} (or strictly SW) from $c'$ (and
$c'$ is strictly NE from $c$) if $i>i'$ and $j<j'$. This again implies that the
two cells are incomparable.
\item Cell $c$ is \emph{strictly South-East} (or strictly SE) from $c'$ (and
$c'$ is strictly NW from $c$) if $c$ and $c'$ are incomparable, and moreover
$i>i'$ and $j>j'$.
\item Cell $c$ is \emph{weakly SW} from $c'$ is $c$ is South, West of strictly
SW of $c'$. Weakly NE, weakly NW and weakly SE are defined analogously.
\end{itemize}

With this terminology, \tpop-avoidance and \N-avoidance of interval orders may 
be characterized in terms of Fishburn matrices, as shown by the next lemma,
whose first part essentially already appears in the work of Dukes et
al.~\cite{DJK}. Refer to Figure~\ref{fig-swse}.

\begin{lemma}\label{lem-swse}
Let $R=(X,\cl)$ be an interval order represented by a Fishburn matrix~$M$. 
Let $x$ and $y$ be two elements of $X$, represented by cells $c_x$ and $c_y$
of~$M$. 
\begin{enumerate}
\item
The cell $c_x$ is strictly SW of $c_y$ if and only if $X$ contains two
elements $u$ and $v$ which together with $x$ and $y$ induce a
copy of \tpo\ in $R$ such that $u\cl x \cl v$, and $y$ is incomparable to
each of $u,x,v$.
\item
The cell $c_x$ is strictly NW of $c_y$ if and only if $X$ contains two elements
$u$ and $v$ which together with $x$ and $y$ induce a copy of \N\ in
$R$ such that $u\cl y$, $u\cl v$ and $x\cl v$, and the remaining pairs among
$u,v,x,y$ are incomparable.
\end{enumerate}

In particular, $R$ is \tpop-free if and only if the matrix $M$ has
no two distinct nonzero cells in a strictly SW position, and 
$R$ is \N-free if and only if $M$ has no two distinct nonzero cells
in a strictly NW position.
\end{lemma}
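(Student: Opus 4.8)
The plan is to push everything through the minimal interval representation $I$, writing $I(x)=[l_x,r_x]$ so that the cell $c_x$ is the pair $(l_x,r_x)$ (row $l_x$, column $r_x$), and recalling from Observation~\ref{obs-fish} that $x'\cl x$ holds exactly when $r_{x'}<l_x$. Under this dictionary, ``$c_x$ is strictly SW of $c_y$'' reads $l_y<l_x\le r_x<r_y$, while ``$c_x$ is strictly NW of $c_y$'' reads $l_x<l_y\le r_x<r_y$, the middle inequality $l_y\le r_x$ in the latter being exactly the incomparability of $x$ and $y$. The one tool that drives both forward implications is the minimality of $I$: for every $k\in\{1,\dots,m\}$ some interval has right endpoint $k$ and some interval has left endpoint $k$. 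This is what I would use to manufacture the witnesses $u$ and $v$, after which every claim reduces to comparing integer endpoints.

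For the forward direction of part~(1), assume $l_y<l_x\le r_x<r_y$. Since $1\le l_y\le l_x-1$ and $r_x+1\le r_y\le m$, minimality supplies an element $u$ with $r_u=l_x-1$ and an element $v$ with $l_v=r_x+1$. Then $r_u<l_x$ gives $u\cl x$, and $r_x<l_v$ gives $x\cl v$, so $u\cl x\cl v$ is a chain. For the three pairs $\{u,y\},\{v,y\},\{x,y\}$ I would check that both defining inequalities fail; e.g. $r_u=l_x-1\ge l_y$ rules out $u\cl y$, while $l_u\le r_u<r_y$ rules out $y\cl u$. The four elements are distinct (for instance $y\ne x$ because $c_x\ne c_y$, and $y\ne u,v$ because $y$ is comparable to neither while $u,v$ lie on the chain), so $u,x,v,y$ induce a \tpo\ of the required shape. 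For the converse I read two chains of inequalities straight off the hypothesis: $u\cl x$ together with $u\not\cl y$ gives $l_y\le r_u<l_x$, hence $l_x>l_y$; and $x\cl v$ together with $y\not\cl v$ gives $r_x<l_v\le r_y$, hence $r_x<r_y$. Thus $c_x$ is strictly SW of $c_y$.

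Part~(2) is completely analogous. Assuming $l_x<l_y\le r_x<r_y$, I take $u$ with $r_u=l_y-1$ and $v$ with $l_v=r_x+1$, both available by minimality since $1\le l_x\le l_y-1$ and $r_x+1\le r_y\le m$. Now $r_u=l_y-1<l_y$ gives $u\cl y$, while $r_x<l_v$ and $r_u=l_y-1\le r_x-1<l_v$ give $x\cl v$ and $u\cl v$; the remaining pairs $\{u,x\},\{v,y\},\{x,y\}$ are incomparable by the same ``both inequalities fail'' check, where the overlap $l_y\le r_x$ is precisely what forces $x$ and $y$ incomparable. This is a copy of \N. The converse reverses the reading: $u\cl y$ with $u\not\cl x$ gives $l_x\le r_u<l_y$, and $x\cl v$ with $y\not\cl v$ gives $r_x<l_v\le r_y$, so $l_x<l_y$ and $r_x<r_y$; as $x,y$ are incomparable by hypothesis, $c_x$ is strictly NW of $c_y$.

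Finally, the ``in particular'' clause is an immediate corollary. Any copy of \tpo\ in $R$ is a $3$-chain together with an element incomparable to all of it, which after labelling the chain $u\cl x\cl v$ and the isolated vertex $y$ is exactly the configuration of part~(1); likewise each copy of \N\ matches part~(2). Hence $R$ contains \tpo\ iff some pair $x,y$ has $c_x$ strictly SW of $c_y$, and choosing arbitrary representatives shows this is equivalent to $M$ having two nonzero cells in strictly SW position (distinctness of the two cells is automatic, as their rows differ); the \N\ statement is symmetric with NW in place of SW. I expect the only genuinely fiddly part to be the disciplined bookkeeping of all six pairwise comparabilities in each forward direction; the appeals to minimality and the rest are routine integer arithmetic.
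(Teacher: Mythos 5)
Your proof is correct and takes essentially the same route as the paper's: the forward directions read the strict SW/NW inequalities off the comparabilities with the witnesses $u,v$, and the converses manufacture $u$ and $v$ from the minimality of the interval representation, which is exactly the nonzero-row/column condition on $M$ that the paper invokes. The only differences are cosmetic --- you phrase everything in interval endpoints where the paper uses matrix cells, you pick slightly different (equally valid) witnesses, and you write out part~(1), which the paper delegates to Dukes et al.
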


\begin{figure}
 \hfil\includegraphics[width=\textwidth]{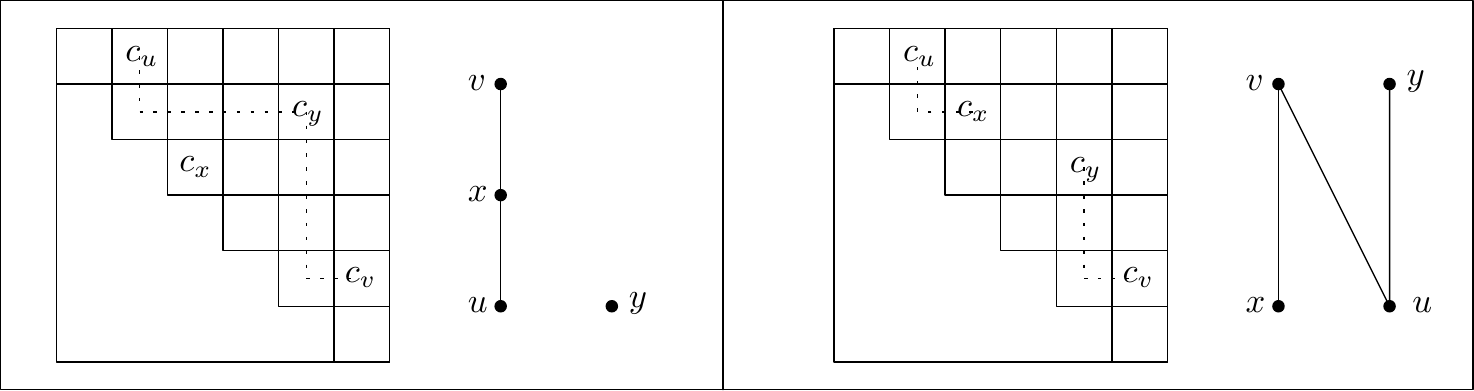}
\caption{The two parts of Lemma~\ref{lem-swse}. Left: if $c_x$ is strictly SW
of $c_y$, we get an occurrence of \tpo. Right: if $c_x$ is strictly NW
of $c_y$, we get an an occurrence of \N.}\label{fig-swse}
\end{figure}

\begin{proof}
We will only prove the second part of the lemma, dealing with \N-avoidance. The 
first part can be proved by a similar argument, as shown by Dukes at
al.~\cite[proof of Proposition 16]{DJK}. 

Suppose first that $R$ contains a copy of \N\ induced by four elements $u$,
$v$, $x$ and $y$, which form exactly three comparable pairs $u\cl y$, $u\cl v$
and $x\cl v$. Let $c_u=(i_u,j_u)$, $c_v=(i_v,j_v)$, $c_x=(i_x,j_x)$ and
$c_y=(i_y,j_y)$ be the cells of $M$ representing these four elements. We claim
that $c_x$ is strictly NW from~$c_y$. To see this, note that $c_x$ is smaller
than $c_v$ while $c_y$ is not smaller than $c_v$, hence $j_x<i_v\le j_y$.
Similarly, $c_y$ is greater than $c_u$ while $c_x$ is not, implying that $i_x\le
j_u<i_y$. Since $c_x$ and $c_y$ are incomparable, it follows that $c_x$ is
strictly NW from~$c_y$.

Conversely, suppose that $M$ has two nonzero cells $c_x=(i_x,j_x)$ and 
$c_y=(i_y,j_y)$, with $c_x$ being strictly NW from~$c_y$. We thus have the
inequalities $i_x<i_y\le j_x<j_y$, where the inequality $i_y\le j_x$ follows
from the fact that $c_x$ and $c_y$ are incomparable. Let $c_u$ be any nonzero
cell in column~$i_x$. This choice guarantees that $c_u$ is incomparable to $c_x$
and smaller than~$c_y$. Similarly, let $c_v$ be a nonzero cell in row~$j_y$.
Then $c_v$ is incomparable to $c_y$ and greater than both $c_x$ and~$c_u$. Any
four elements $x$, $y$, $u$ and $v$ of $R$ represented by the four cells
$c_x$, $c_y$, $c_u$ and $c_v$ induce a copy of~\N.
\end{proof}

\section{Catalan pairs}\label{sec-catpairs}

We begin by defining the concept of Catalan pairs of type 1, originally
introduced by Disanto et al.~\cite{DisantoAAM}, who called them simply `Catalan
pairs'.
\begin{definition}
A \emph{Catalan pair of type 1} (or \emph{C1-pair} for short) is a relational 
structure $(S,R)$ on a finite set $X$ satisfying the following axioms.
\begin{itemize}
 \item[C1a)] $S$ and $R$ are both partial orders on~$X$.
 \item[C1b)] Any two distinct elements of $X$ are comparable by exactly one of
the orders $S$ and $R$.
 \item[C1c)] For any three distinct elements $x,y,z$ satisfying $xSy$ and
$yRz$, we have $xRz$.
\end{itemize}
We let $\Co_n$ denote the set of unlabeled C1-pairs on $n$ vertices.
\end{definition}

Note that axiom C1c might be replaced by the seemingly weaker condition that
$X$ does not contain three distinct elements $x$, $y$ and $z$ satisfying
$xSy$, $yRz$ and $xSz$ (see the left part of Figure~\ref{fig-forb}). Indeed, if
$xSy$ and $yRz$ holds for some $x$, $y$ and $z$, then both $zRx$ and $zSx$ would
contradict axiom C1a or C1b, so the role of C1c is merely to exclude the
possibility $xSz$, leaving $xRz$ as the only option. This also shows that the
definition of C1-pairs would not be affected if we replaced C1c by the following
axiom, which we denote C1c*: ``For any three distinct elements $x,y,z$
satisfying $xSy$ and $zRy$, we have $zRx$.''

We remark that it is easy to check that in a C1-pair $(S,R)$ on a vertex set
$X$, the relation $S\cup R$ is a linear order on~$X$.

As shown by Disanto et al.~\cite{DisantoAAM},  if $(S,R)$ is a C1-pair, then 
$R$ is a \tptn-free poset, and conversely, for any \tptn-free poset 
$R$, there is, up to isomorphism, a unique C1-pair $(S,R)$. Since there are, up
to isomorphism, $C_n$ distinct \tptn-free posets on an $n$-elements set,
this implies that C1-pairs are enumerated by Catalan numbers. In fact, the next
lemma shows that the first component of a C1-pair $(S,R)$ can be easily
reconstructed from the Fishburn matrix representing the second component.

\begin{figure}
 \hfil\includegraphics[width=0.6\textwidth]{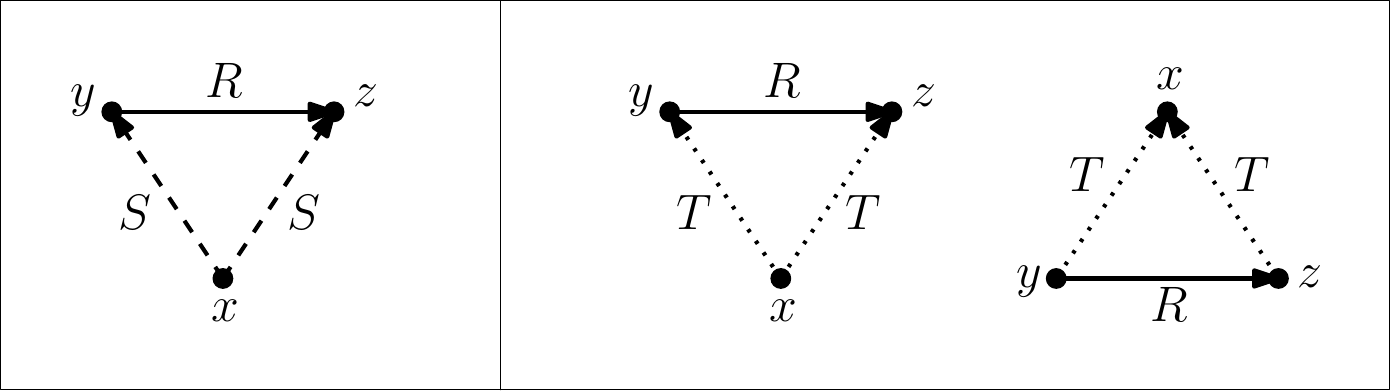}
\caption{Left: the substructure forbidden in C1-pairs by axiom C1c. Right: the
two substructures forbidden in C2-pairs by axiom C2c.}\label{fig-forb} 
\end{figure}

\begin{lemma}\label{lem-c1mat} Let $(S,R)$ be a C1-pair, and let $M$ be the 
Fishburn matrix representing the poset~$R$. Then $S$ has the following 
properties:
\begin{itemize}
\item[(a)] If $c$ is a nonzero cell of $M$ and $X_c$ the set of elements
represented by $c$, then $X_c$ is a chain in~$S$, that is, the restriction of 
$S$ to $X_c$ is a linear order.
\item[(b)] If $c$ and $d$ are distinct nonzero cells of $M$, representing sets
of elements $X_c$ and $X_d$ respectively, and if $c$ is weakly SW from $d$,
then for any $x\in X_c$ and $y\in X_d$ we have $xSy$. 
\item[(c)] Apart from the situations described in parts (a) and (b), no other 
pair $(x,y)\in X^2$ is $S$-comparable. 
\end{itemize}
\end{lemma}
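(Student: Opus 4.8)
The plan is to characterize the relation $S$ completely by using axiom C1b to determine which pairs are $S$-comparable, and axiom C1c (equivalently C1c*) to pin down the direction of $S$-comparability. The starting observation is that by C1b, two distinct elements $x,y$ are $S$-comparable precisely when they are $R$-incomparable. By the last part of Observation~\ref{obs-fish} and the definition of comparable cells, two elements $x\in X_c$ and $y\in X_d$ are $R$-comparable if and only if the cells $c$ and $d$ are comparable (one is greater than the other). Hence $S$-comparability holds exactly when either $c=d$ (the case of part (a), including two distinct elements in the same cell) or $c$ and $d$ are distinct but incomparable. The incomparable mutual positions of two cells on or above the diagonal were enumerated just before Lemma~\ref{lem-swse}: they are South/North, East/West, strictly SW/NE, and strictly SE/NW. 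Part (b) claims $S$ points ``from the weakly SW cell to the weakly NE cell,'' where weakly SW bundles together South, West, and strictly SW. So the heart of the proof is to show that for each incomparable position, the direction of $S$ is forced, and that the strictly SE/NW position is exactly the residual case left unspecified by (b), which part (c) then disallows from contributing any new comparabilities beyond (a) and (b)—but note that SE/NW cells are still $S$-comparable, so (c) must be read as asserting these fall under (b) after all, which is the subtle point I will need to verify.

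First I would handle part (a): within a single nonzero cell $c$, all elements of $X_c$ are pairwise $R$-incomparable (same interval, so indistinguishable, giving no $R$-relations among them), hence pairwise $S$-comparable by C1b; since $S$ is a partial order, its restriction to $X_c$ is a partial order in which every two elements are comparable, i.e. a linear order, proving (a). Next, for part (b), I would take $x \in X_c$, $y \in X_d$ with $c$ weakly SW of $d$ and $c\neq d$. These cells are incomparable, so $x$ and $y$ are $S$-comparable; I must show $xSy$ rather than $ySx$. The mechanism is axiom C1c applied to a well-chosen third element or a chain of intermediate cells. Concretely, writing $c=(i,j)$ and $d=(i',j')$ with $i\ge i'$ and $j\le j'$ (weakly SW), I would exhibit a witness: because $M$ is a Fishburn matrix, columns and rows have nonzero weight, so I can find auxiliary nonzero cells and use C1c to propagate the direction of $S$. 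The cleanest route is to first prove the ``unit'' cases—$d$ directly North of $c$, and $d$ directly East of $c$—and then chain them, since any weakly-SW/NE relation decomposes into northward and eastward steps.

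For the unit step suppose $c$ is South of $d$, i.e. $c=(i,j)$, $d=(i',j)$ with $i>i'$. Pick any element $z$ in a nonzero cell of the last column (a maximal element), or more usefully an element $w$ witnessing comparability: since $i>i'$, there is room for a cell $e$ with $c$ smaller than $e$ but $d$ incomparable to $e$, or vice versa, letting me invoke C1c to deduce $xSy$. I expect the main obstacle to be exactly this: carefully producing, from the Fishburn-matrix conditions, the third element needed to trigger C1c and checking the $R$-relations have the right direction, while simultaneously ruling out the reverse orientation $ySx$ (which C1c* would contradict). Once both unit directions are established with forced orientation, transitivity of $S$ composes them along any staircase path from $c$ up-and-right to $d$, giving $xSy$ for all weakly SW $c$. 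Finally, part (c) is a counting/exhaustion argument: every $S$-comparable pair arises from $R$-incomparable cells, the incomparable positions are exactly those listed, and I must show that the strictly SE position of $c$ relative to $d$ is the same as $d$ being strictly NW of $c$, i.e. $d$ is weakly SW of $c$ in the reversed sense—so it is already covered by part (b) with the roles of $c$ and $d$ interchanged. Verifying that (a) and (b) together already account for all incomparable-cell pairs, with no leftover unconstrained pairs, completes (c); the delicate point I will double-check is that ``strictly SE'' and ``strictly NW'' are mutually inverse, so that (b) applied to the pair $(d,c)$ covers precisely the pairs not literally matching the hypothesis of (b) for $(c,d)$.
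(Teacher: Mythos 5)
Your outline for parts (a) and (b) is essentially the paper's: (a) is immediate from $R$-indistinguishability plus C1b, and (b) is proved by assuming $ySx$ and producing a third element $z$ that triggers C1c or contradicts transitivity of $S$. But you leave the witness unconstructed and hedge with ``$c$ smaller than $e$ \dots or vice versa.'' Only one of those alternatives can work: when $c=(i_c,j_c)$ is, say, South of $d=(i_d,j_d)$ (same column), any cell $e$ that is greater than $c$ is also greater than $d$, so you cannot separate them from above. The paper's choice is a nonzero cell $e$ in \emph{column} $i_d$ (which exists since $M$ is a Fishburn matrix); such an $e$ is smaller than $c$ but incomparable to $d$, and then either orientation of the forced $S$-comparability between $z\in X_e$ and $y$ yields a contradiction (one via transitivity of $S$ against $zRx$, the other via axiom C1c). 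This is the step you flagged as ``the main obstacle,'' and it is genuinely the content of (b); as written, your proof of (b) is incomplete, though the strategy is right and the staircase decomposition is an unnecessary detour (the paper handles weakly SW in one stroke by splitting on $i_c>i_d$ versus $j_c<j_d$).

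The real gap is in part (c). You assert that when $c$ is strictly SE of $d$ the pair ``falls under (b) after all'' with the roles of $c$ and $d$ interchanged, and you plan to verify that strictly SE and strictly NW are mutually inverse so that (b) applied to $(d,c)$ covers it. That verification will fail: strictly NW is indeed the inverse of strictly SE, but \emph{weakly SW} is the inverse of \emph{weakly NE}, not of strictly NW. If $c$ is strictly SE of $d$ (so $i_c>i_d$ and $j_c>j_d$, incomparable), then neither $c$ is weakly SW of $d$ nor $d$ is weakly SW of $c$, so part (b) applies in neither direction, while the two elements are still $S$-comparable by C1b --- which would outright contradict (c). The correct resolution, and the one the paper uses, is that this configuration simply cannot occur: $R$ is \N-free (cited from Disanto et al.\ just before the lemma), and by Lemma~\ref{lem-swse} an \N-copy in $R$ is equivalent to two nonzero cells in strictly NW position. (If you prefer not to cite the \N-freeness, you can derive it: take the \N-witness $u\cl y$, $u\cl v$, $x\cl v$ from Lemma~\ref{lem-swse} and check that $xSy$ contradicts C1c* via $uRy$, while $ySx$ contradicts C1c via $xRv$.) Without excluding the strictly SE/NW case, your case analysis for (c) does not close.
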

\begin{proof}
Let $(S,R)$ be a C1-pair. Property (a) of the Lemma follows directly from the
fact that the elements of $X_c$ form an antichain in $R$. To prove property
(b), fix distinct nonzero cells $c=(i_c,j_c)$ and $d=(i_d,j_d)$ such that $c$
is weakly SW from~$d$, and choose $x\in X_c$ and $y\in X_d$ arbitrarily. Since
$x$ and $y$ are $R$-incomparable by Observation~\ref{obs-fish}, we must have
either $xSy$ or $ySx$. Suppose for contradiction that $ySx$ holds. As $c$ is
weakly SW from $d$, we know that $i_c>i_d$ or $j_c<j_d$. Suppose that $i_c>i_d$,
as the case $j_c<j_d$ is
analogous. Let $e$ be any nonzero cell in column $i_d$ of $M$, and let $z\in X$
be an element represented by~$e$. Note that $e$ is smaller than $c$ and
incomparable to~$d$. It follows that $y$ and $z$ are comparable by $S$.
However, if $zSy$ holds, then we get a contradiction with the transitivity of
$S$, due to $ySx$ and $zRx$. On the other hand, if $ySz$ holds, we see that
$x$, $y$ and $z$ induce the substructure forbidden by axiom C1c.

To see that property (c) holds, note that by Lemma~\ref{lem-swse}, the matrix 
$M$ has no two nonzero cells in strictly NW position. Thus, if $x$ and $y$ are 
distinct elements of $X$ represented by cells $c_x$ and $c_y$, then either 
$c_x=c_y$ and $x,y$ are $S$-comparable by property (a), or $c_x$ and $c_y$ are 
in a weakly SW position and $x,y$ are $S$-comparable by (b), or one of the two 
cells is smaller than the other, which means that the two elements are 
$R$-comparable.
\end{proof}

Various Catalan-enumerated objects, such as noncrossing matchings, Dyck paths,
or 132-avoiding permutations, can be encoded in a natural way as C1-pairs 
(see~\cite{DisantoAAM}). However, there are also examples of Catalan objects, 
such as nonnesting matchings or \tpto-free posets, possessing a natural
underlying structure that satisfies a different set of axioms. This motivates
our next definition.

\begin{definition} A \emph{Catalan pair of type 2} (or \emph{C2-pair} for short)
is a relational structure $(T,R)$ on a finite set $X$ with the following
properties.
\begin{itemize}
 \item[C2a)] $R$ and $T\cup R$ are both partial orders on~$X$.
 \item[C2b)] Any two distinct elements of $X$ are comparable by exactly one of
the relations $T$ and~$R$.
 \item[C2c)] There are no three distinct elements $x,y,z\in X$ satisfying $xTy$,
$xTz$ and $yRz$, and also no three elements $x,y,z\in X$ satisfying $yTx$,
$zTx$ and $yRz$.
\end{itemize}
We let $\Ct_n$ denote the set of unlabeled C2-pairs on $n$ vertices.
\end{definition}

\begin{lemma}\label{lem-C2} If $(T,R)$ is a C2-pair on a vertex set $X$, then
$R$ is a \tpto-free poset. 
\end{lemma}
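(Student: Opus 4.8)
The plan is to show that if $(T,R)$ is a C2-pair, then the poset $R$ cannot contain either of the forbidden patterns \tpt\ or \tpo, by deriving a contradiction from axiom C2c in each case. Since $R$ is already a partial order by C2a (being a sub-relation of the partial order $T\cup R$, and in fact explicitly assumed to be a partial order), the only thing left to verify is \tpt-freeness and \tpo-freeness. The key structural observation I would use throughout is that, by C2b, any two distinct elements that are \emph{incomparable in $R$} must be comparable by $T$; this is what lets me translate $R$-incomparabilities in a forbidden pattern into concrete $T$-comparabilities, which can then be fed into the prohibitions of C2c.

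First I would handle \tpt-freeness. Suppose for contradiction that $R$ contains a copy of \tpt, say on elements $a,b,c,d$ with $aRb$ and $cRd$ being the two comparable pairs and all four ``cross'' pairs incomparable in $R$. In particular $a,c$ are $R$-incomparable and $b,d$ are $R$-incomparable, so by C2b each of these pairs is $T$-comparable. I would then consider the $T$-relation between $a$ and $c$: whichever way it points, I want to pair it with a suitable $R$-relation to trigger one of the two forbidden configurations of C2c. For example, if $cTa$ holds, then together with $cRd$ and the appropriate $T$-comparability of $a$ and $d$ (again $a,d$ are $R$-incomparable, so $T$-comparable) I can try to exhibit a triple matching one of the patterns $xTy,xTz,yRz$ or $yTx,zTx,yRz$. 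The main work is a careful case analysis over the directions of the forced $T$-comparabilities among the cross pairs, checking that in every case one of the two C2c-forbidden triples appears. I expect symmetry (the two halves of C2c are mirror images under reversing $R$) to collapse the cases into essentially one argument.

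Next I would treat \tpo-freeness by the same strategy. Suppose $R$ contains a copy of \tpo\ on elements $a,b,c,d$, where $a,b,c$ form a $3$-chain $aRbRc$ and $d$ is $R$-incomparable to all of $a,b,c$. Then the pairs $\{a,d\},\{b,d\},\{c,d\}$ are each $R$-incomparable, hence each is $T$-comparable by C2b. The natural move is to look at the $T$-direction between $d$ and $b$ (the middle element of the chain). If $dTb$, then combined with $dTa$ or $dTc$ and one of the chain relations $aRb$, $bRc$ I aim to produce a triple of the form $xTy,xTz,yRz$; if instead $bTd$, I aim for the mirror form $yTx,zTx,yRz$ using $bRc$ together with $dTc$ or similar. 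Again the core is to verify that the forced $T$-orientations, once intersected with the existing $R$-relations $aRb$ and $bRc$, cannot avoid both prohibited triples.

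The step I expect to be the main obstacle is making the case analysis genuinely exhaustive and clean rather than merely suggestive: in each forbidden pattern there are several cross pairs whose $T$-direction is only constrained (not determined) by C2b, and I must ensure that \emph{every} consistent assignment of these directions activates one of the two clauses of C2c. The delicate point is that C2c forbids triples with a very specific shape — two $T$-edges sharing a common source (or a common target) plus an $R$-edge between the two other endpoints — so I must pick the auxiliary element and orient the argument so that the shared vertex of the two $T$-edges is exactly the element I intend. I would organize this by first fixing the ``pivot'' element (the source or sink of the two $T$-edges) dictated by the orientation forced in each subcase, and then reading off which $R$-edge of the pattern supplies the required $yRz$. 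Once the pivot is chosen correctly, each subcase should close in one line, so the real content is the bookkeeping that guarantees coverage of all orientations.
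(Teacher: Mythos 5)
Your overall strategy (use C2b to force $T$-comparability of the $R$-incomparable pairs in a would-be copy of \tpt\ or \tpo, then contradict C2c) is the right starting point, but as described it has a genuine gap in the \tpt\ case. You propose to run an exhaustive case analysis over the orientations of the four cross $T$-edges and check that each orientation activates one of the two clauses of C2c. It does not: label the copy of \tpt\ by $aRb$, $cRd$, and consider the cyclic orientation $aTc$, $cTb$, $bTd$, $dTa$. Here no vertex is the source of two $T$-edges and no vertex is the sink of two $T$-edges, so neither clause of C2c can be applied, and your case analysis stalls. This orientation is excluded not by C2c but by axiom C2a, namely the transitivity of $T\cup R$: from $aTc$ and $cRd$ we get $a\,(T\cup R)\,d$, and since $a$ and $d$ are $R$-incomparable this forces $aTd$, contradicting $dTa$. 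Your write-up never invokes the transitivity of $T\cup R$, and without it the argument cannot be completed.

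The paper's proof uses that transitivity step as the engine rather than a case analysis: given $xRy$ and $uRv$ with all cross pairs $R$-incomparable, it takes the single forced comparability between $x$ and $u$, assumes $xTu$ without loss of generality, and then derives the \emph{second} $T$-edge $xTv$ from $xTu$ and $uRv$ via transitivity of $T\cup R$ (the alternative $xRv$ being excluded). The triple $xTu$, $xTv$, $uRv$ then violates the first clause of C2c immediately, with no enumeration of orientations. For the \tpo\ case your plan does in fact go through as stated, since all three cross edges are incident to the extra element $d$ and the pigeonhole principle guarantees two of them share $d$ as a common source or common sink; but the same transitivity trick handles that case in one line as well. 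To repair your proof, add the transitivity of $T\cup R$ to your toolkit and use it to propagate a chosen $T$-edge along an $R$-edge of the pattern.
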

\begin{proof}
Let $(T,R)$ be a C2-pair. Suppose for contradiction that $R$ contains a copy of 
\tpt\ induced by four elements $\{x,y,u,v\}$, with $xRy$ and $uRv$. Any two 
$R$-incomparable elements must be comparable in $T$,
so we may assume, without loss of generality, that $xTu$ holds. Then $xTv$
holds by transitivity of $T\cup R$, contradicting axiom C2c.
Similar reasoning shows that $R$ is \tpop-free.
\end{proof}

As in the case of C1-pairs, a C2-pair $(T,R)$ is uniquely determined by its
second component, and its first component can be easily reconstructed from the
Fishburn matrix representing the second component. 

\begin{lemma}\label{lem-C2enum}
Let $R$ be \tpto-free poset represented by a Fishburn matrix~$M$. Let $T$ 
be a relation satisfying these properties:
\begin{itemize}
\item[(a)] If $c$ is a nonzero cell of $M$ and $X_c$ the set of elements
represented by $c$, then $X_c$ is a chain in~$T$, that is, the restriction of 
$T$ to $X_c$ is a linear order.
\item[(b)] If $c$ and $d$ are distinct nonzero cells of $M$, representing sets
of elements $X_c$ and $X_d$ respectively, and if $c$ is weakly NW from $d$,
then for any $x\in X_c$ and $y\in X_d$ we have $xTy$. 
\item[(c)] Apart from the situations described in parts (a) and (b), no other 
pair $(x,y)\in X^2$ is $T$-comparable.
\end{itemize}
Then $(T,R)$ is a C2-pair. Moreover, if $(T',R)$ is another C2-pair, then
$(T,R)$ and $(T',R)$ are isomorphic relational structures. It follows that
on an $n$-element vertex set $X$, there are $C_n$ isomorphism types of Catalan
pairs, where $C_n$ is the $n$-th Catalan number. 
\end{lemma}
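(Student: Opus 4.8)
The plan is to prove the two assertions separately: first that the relation $T$ specified by (a)--(c) actually makes $(T,R)$ a C2-pair, and then that the C2-axioms \emph{force} any competing first component $T'$ to satisfy the same three properties, so that $(T',R)$ can differ from $(T,R)$ only by a relabelling inside the cells.

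For the existence direction I would lean on the cell geometry of semiorder matrices. Since $R$ is \tpto-free, Lemma~\ref{lem-swse} tells us $M$ has no two distinct nonzero cells in strictly SW position, so every pair of $R$-incomparable elements sits in cells that are either equal or in a weakly NW/SE position. Matching this against (a)--(c) gives axiom C2b at once: the $T$-comparable pairs are exactly the $R$-incomparable ones, and each gets a single orientation (the within-cell chain order in case (a), the weakly-NW orientation in case (b)). For C2a I would exhibit $T\cup R$ as an explicit linear order $\ll$: order the nonzero cells by left endpoint, breaking ties by right endpoint (this is a total order precisely because the semiorder condition forbids $i_x<i_y$ together with $j_x>j_y$ among nonzero cells), and order elements within a cell by their $T$-chain. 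A one-line check shows $xRy$ implies $i_x<i_y$ and $xTy$ implies $c_x\ll c_y$, so $T\cup R\subseteq\,\ll$; being total, $T\cup R$ then \emph{equals} $\ll$ and is a linear order, giving C2a. Axiom C2c is a short endpoint computation: from $xTy$ we get $j_x\le j_y$, so $xTy$, $xTz$, $yRz$ would force $j_x\le j_y<i_z$, i.e. $c_x$ smaller than $c_z$ and hence $xRz$, contradicting $xTz$ (as $T$ and $R$ are disjoint); the second forbidden configuration is excluded symmetrically using left endpoints, via $yTx\Rightarrow i_y\le i_x$.

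The heart of the argument is the uniqueness direction, where I must show that \emph{any} C2-pair $(T',R)$ satisfies (a)--(c). Property (a) is immediate, since $X_c$ is an $R$-antichain, so $T'\cup R$ restricted to $X_c$ coincides with $T'$ there and is a partial order that is total, hence a linear order. The main obstacle is forcing the orientation of (b): if $c_x$ is weakly NW of $c_y$ I must rule out $yT'x$. I would argue by contradiction and build a witness using the defining property of a Fishburn matrix that \emph{every row and every column is nonzero}. If $j_x<j_y$, pick any element $w$ in a nonzero cell of row $j_y$; then $xRw$ (since $j_x<j_y$) while $w$ is $R$-incomparable to $y$, and $w$ is automatically distinct from $x$ and $y$ because it is $R$-above $x$ but not above $y$. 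Transitivity of $T'\cup R$ applied to $yT'x$ and $xRw$ yields $yT'w$, so $yT'x$, $yT'w$, $xRw$ is exactly the first configuration forbidden by C2c. If instead $j_x=j_y$ and $i_x<i_y$, pick $w$ in a nonzero cell of column $i_x$; then $wRy$ while $w$ is $R$-incomparable to $x$, and $wRy$ with $yT'x$ gives $wT'x$, so $wT'x$, $yT'x$, $wRy$ is the second forbidden configuration. Either way we reach a contradiction, so $xT'y$, and (c) then follows from C2b exactly as in the existence direction.

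Finally I would assemble the count. Having shown every C2-pair with second component $R$ satisfies the same (a)--(c), the only remaining freedom is the linear order chosen inside each cell; sending the $k$-th element of the $T$-chain of $X_c$ to the $k$-th element of the $T'$-chain of $X_c$ defines a bijection preserving cells, hence preserving $R$ (which depends only on cells) and matching $T$ with $T'$ (since (b) treats each cell uniformly), i.e. an isomorphism $(T,R)\cong(T',R)$. Thus the map $(T,R)\mapsto R$ descends to a well-defined bijection between isomorphism types of C2-pairs and isomorphism types of their second components; by Lemma~\ref{lem-C2} these second components are exactly the \tpto-free posets, which are enumerated by $C_n$. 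I expect the witness construction forcing the orientation in (b) to be the one genuinely delicate point; everything else is routine once the weakly-NW/SE description of incomparable cells in a semiorder matrix is available.
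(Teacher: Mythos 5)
Your proposal is correct and follows essentially the same route as the paper: both directions hinge on Lemma~\ref{lem-swse} (no two nonzero cells in strictly SW position), the endpoint computations for C2c are identical, and the uniqueness step forces the orientation in (b) by picking a witness in a nonzero row or column and invoking transitivity of $T'\cup R$ together with C2c, exactly as the paper does (your case split on $j_x<j_y$ versus $i_x<i_y$ is just the symmetric variant of the paper's). The only additions are cosmetic — you spell out C2a via an explicit lexicographic linear order where the paper calls it easy, and your parenthetical justification for that order being total is unnecessary (lexicographic order on cells is total regardless; the semiorder condition is needed for C2b, not for totality).
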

\begin{proof}
Let $R$ be a \tpto-free partial order on a ground set~$X$, 
represented by a Fishburn matrix $M$. Let $T$ be a relation satisfying 
properties (a), (b) and (c) of the lemma. 

We claim that $(T,R)$ is a C2-pair. It is easy to see that $(T,R)$ satisfies
axiom C2a. Let us verify that C2b holds as well, i.e., any two distinct
elements $x, y\in X$ are comparable by exactly one of $T$, $R$. Since clearly
no two elements can be simultaneously $T$-comparable and $R$-comparable, it is
enough to show that any two $R$-incomparable elements $x,y\in X$ are
$T$-comparable. If $x$ and $y$ are also $R$-indistinguishable, then they are
represented by the same cell $c$ of~$M$, and they are $T$-comparable by (a). If
$x$ and $y$ are $R$-distinguishable, then they are represented by two distinct
cells, say $c_x$ and $c_y$. By Lemma~\ref{lem-swse}, $M$ has no pair of nonzero
cells in strictly SW position, and hence $c_x$ and $c_y$ must be in weakly NW
position. Hence $x$ and $y$ are $T$-comparable by (b). 

To verify axiom C2c, assume first, for contradiction, that there are three
elements $x,y,z\in X$ satisfying $xTy$, $xTz$ and $yRz$. Let $c_x=(i_x,j_x)$,
$c_y=(i_y,j_y)$ and $c_z=(i_z,j_z)$ be the cells of $M$ representing $x$, $y$
and $z$, respectively. Since $c_y$ is smaller than $c_z$, we see that $j_y<i_z$.
Since $(x,y)$ belongs to $T$, we see that either $c_x=c_y$ or $c_x$ is weakly
NW from $c_y$. In any case, $c_x$ is smaller than $c_z$, contradicting $xTz$.
An analogous argument shows that there can be no three elements $x,y,z$
satisfying $yTx$, $zTx$ and~$yRz$. We conclude that $(T,R)$ is a C2-pair.

It remains to argue that for the \tpto-free poset $R$, there is, up to
isomorphism, at most one relation $T$ forming a C2-pair with~$R$. 
Fix a relation $T'$ such that $(T',R)$ is a C2-pair. We will first show that
$T'$ satisfies the properties (a), (b) and (c) of the lemma.
Property (a) follows directly from axioms C2a and~C2b. 

To verify property (b), fix elements $x,y\in X$ represented by distinct cells
$c_x=(i_x,j_x)$ and $c_y=(i_y,j_y)$ such that $c_x$ is weakly NW from $c_y$. We
claim that $(x,y)\in T'$. Suppose for contradiction that this is not the
case. Since $x$ and $y$ are $R$-incomparable, this means that $(y,x)\in T'$
by axiom C2b. Since $c_x$ is weakly NW from $c_y$, we know that $i_x<i_y$ or
$j_x<j_y$. Suppose that $i_x<i_y$ (the case $j_x<j_y$ is analogous). Let $c_z$
be any nonzero cell in column~$i_x$ and $z\in X$ an element represented
by~$c_z$. This choice of $z$ guarantees that $(z,y)\in R$ while $z$ and $x$ are
$R$-incomparable. From $zRy$ and $yT'x$ we deduce, by the transitivity of $R\cup
T'$ and the $R$-incomparability of $x$ and $z$, that $(z,x)$ belongs to $T'$,
which contradicts axiom C2c. The fact that $T'$ satisfies (c) follows from
axiom C2b and the fact that no two nonzero cells of $M$ are in strictly SW
position. 

We conclude that when  $(T,R)$ and $(T',R)$ are C2-pairs, the relations $T$
and $T'$ may only differ by prescribing different linear orders on the classes
of $R$-indistinguishable elements. This easily implies that $(T,R)$ and
$(T',R)$ are isomorphic relational structures.
\end{proof}

\subsection{Statistics on Catalan pairs}\label{ssec-catstats}
We have seen that Catalan pairs naturally encode the structure of Fishburn
matrices representing \tptn-free and \tpto-free posets. However, to
exploit known combinatorial properties of Catalan-enumerated objects, it is
convenient to relate Catalan pairs to more familiar Catalan objects. Our Catalan
objects of choice are the Dyck paths. 

\begin{figure}
 \hfil\includegraphics{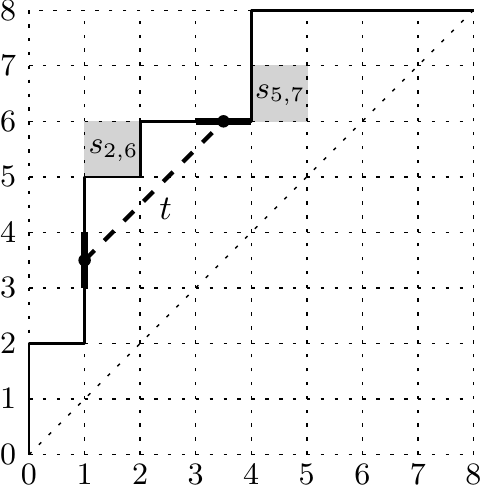}
\caption{A Dyck path (solid) with a tunnel $t$ (dashed). The two steps 
associated to $t$ are highlighted in bold. The unit square $s_{5,7}$ is
below the path while $s_{2,6}$ is above it.}\label{fig-tunnel}
\end{figure}

\begin{definition}
 A \emph{Dyck path} of order $n$ is a lattice path $P$ joining the point $(0,0)$
with the point $(n,n)$, consisting of $n$ \emph{up-steps} and $n$
\emph{right-steps}, where an up-step joins a point $(i,j)$ to $(i,j+1)$ and a
right-step joins $(i,j)$ to $(i+1,j)$, and moreover, every point $(i,j)\in P$
satisfies $i\le j$. We let $\D_n$ denote the set of Dyck paths of order~$n$.

For a Dyck path $P$, we say that a step $s$ of $P$ \emph{precedes} a step $s'$
of $P$ if $s$ appears on $P$ before $s'$ when we follow $P$ from $(0,0)$
to~$(n,n)$.

Given a Dyck path $P$, a \emph{tunnel} of $P$ is a segment $t$ 
parallel to the diagonal line of equation $y=x$, such that the bottom-left
endpoint of $t$ is in the middle of an up-step of $P$, the top-right endpoint
is in the middle of a right-step of $P$, and all the internal points of $t$ are
strictly below the path~$P$. See Figure~\ref{fig-tunnel}. We refer to the
up-step and the right-step that contain the two endpoints of $t$ as \emph{the
steps associated to~$t$}.
\end{definition}

Note that a Dyck path of order $n$ has exactly $n$ tunnels, and every
step of the path is associated to a unique tunnel.  

Let $t_1$ and $t_2$ be two distinct tunnels of a Dyck path $P$. Let $u_i$ and
$r_i$ be the up-step and right-step associated to $t_i$, respectively. Suppose
that $u_1$ precedes $u_2$ on the path~$P$. If $r_2$ precedes $r_1$ on $P$, we
say that $t_2$ is \emph{nested} within~$t_1$. If $r_1$ precedes $r_2$, then we
easily see that $r_1$ also precedes $u_2$ on~$P$. In such case, we say that
$t_1$ \emph{precedes}~$t_2$.

The following construction, due to Disanto et al.~\cite{DisantoAAM},
shows how to represent a Dyck path by a C1-pair. 

\begin{fact}[\cite{DisantoAAM}]\label{fac-dc1} Let $P$ be a Dyck path and let
$X$ be the set of tunnels of $P$. Define a relational structure $\co(P)=(S,R)$
on $X$ as follows: for two distinct tunnels $t_1$ and $t_2$, put $(t_1,t_2)\in
S$ if and only if $t_1$ is nested within $t_2$, and $(t_1,t_2)\in R$ if and only
if $t_1$ precedes $t_2$. Then $\co(P)=(S,R)$ is a C1-pair, and this
construction yields a bijection between Dyck paths of order $n$ and isomorphism
types of C1-pairs of order~$n$. 
\end{fact}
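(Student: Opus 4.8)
The plan is to treat the two assertions separately: first verify that $\co(P)=(S,R)$ satisfies the three C1-axioms, and then establish that the construction is a bijection by combining a reconstruction argument with the known Catalan count of C1-pairs.

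For the axioms, I would start from the dichotomy recorded after the definition of tunnels: for any two distinct tunnels, exactly one of them is nested within the other, or exactly one precedes the other, and these alternatives are mutually exclusive. Writing $u_t$ and $r_t$ for the steps associated with a tunnel $t$, with $u_t$ preceding $r_t$, and ordering two tunnels so that $u_1$ precedes $u_2$, the two cases are separated precisely by whether $r_2$ precedes $r_1$ (nesting) or $r_1$ precedes $r_2$ (precedence). Since $S$ encodes nesting and $R$ encodes precedence, this is exactly axiom C1b. Irreflexivity of $S$ and $R$ is immediate; transitivity of $R$ follows from chaining $r_1\prec u_2\prec r_2\prec u_3$ (using that $u_2$ precedes $r_2$ inside $t_2$), and transitivity of $S$ from chaining $u_3\prec u_2\prec u_1$ together with $r_1\prec r_2\prec r_3$, giving C1a. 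For C1c, if $xSy$ and $yRz$ then $r_x\prec r_y\prec u_z$, hence $r_x\prec u_z$, which says $xRz$. Thus $\co(P)$ is a C1-pair.

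For bijectivity, the structural heart is the claim that $S$ is a \emph{forest order} whose plane structure is recorded by $R$; I would prove this purely from the axioms, so that it applies to any C1-pair. If $tSs_1$ and $tSs_2$ with $s_1\ne s_2$, then $s_1$ and $s_2$ must be $S$-comparable: otherwise, by C1b, say $s_1Rs_2$, and applying C1c to $tSs_1$, $s_1Rs_2$ yields $tRs_2$, contradicting $tSs_2$ via C1b. Hence the set of tunnels containing any given tunnel is an $S$-chain, so $S$ is the order relation of a forest. Any two $S$-incomparable tunnels are $R$-comparable by C1b, so $R$ restricts to a linear order on each sibling class (the children of a common $S$-parent, and the roots). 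Therefore $(S,R)$ records exactly the plane forest of tunnels of $P$. This is precisely the classical correspondence between Dyck paths of order $n$ and plane forests on $n$ nodes: each up-step opens a node closed by its matching right-step, nesting of tunnels is ancestry, and precedence is the left-to-right order of siblings. I expect this identification — confirming that $(S,R)$ faithfully encodes the plane forest and that this encoding coincides with the standard Dyck-path/plane-forest bijection — to be the main obstacle, since it is where the geometry of tunnels and the abstract axioms must be reconciled.

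Finally, because passing from $P$ to its plane forest of tunnels is injective and $(S,R)$ is merely an encoding of that forest, distinct Dyck paths of order $n$ yield non-isomorphic C1-pairs, so $P\mapsto\co(P)$ is injective on isomorphism types. There are $C_n$ paths in $\D_n$, and, as recalled above from Disanto et al., exactly $C_n$ isomorphism types of C1-pairs of order $n$. An injective map between two finite sets of equal cardinality is a bijection, which completes the argument; moreover the forest reconstruction of the previous paragraph provides the inverse map explicitly.
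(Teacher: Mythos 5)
The paper gives no proof of this statement at all: it is labelled a \emph{Fact} and attributed to Disanto et al., so there is no in-paper argument to compare yours against. Your proof is essentially correct, and it is worth noting that it follows exactly the template the paper itself uses for the analogous C2-pair statement (Lemma~\ref{lem-dc2}): verify the axioms directly, check that distinct Dyck paths yield non-isomorphic pairs, and then conclude surjectivity from the fact that both sides are counted by $C_n$ (the Catalan enumeration of C1-pairs being available from the paper's earlier discussion of \tptn-free posets, so there is no circularity). Your axiom checks are clean --- the noncrossing dichotomy gives C1b, the chains of step-precedences give C1a and C1c --- and your structural lemma that $S$-up-sets are chains (hence $S$ is a forest order with siblings linearly ordered by $R$) correctly yields injectivity on isomorphism types.

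One soft spot: you claim at the end that the forest reconstruction ``provides the inverse map explicitly,'' but as written it does not quite. To reconstruct $P$ from an abstract C1-pair and verify that $\co(P)$ recovers the original pair, you must also show that $R$ is \emph{determined} by $S$ together with the sibling and root orders --- i.e., that for $S$-incomparable $x,y$ the direction of $R$ between them is forced by the direction of $R$ between the children of their least common $S$-ancestor (or between their roots). This does follow from C1c and its equivalent form C1c* (propagating $R$ down the $S$-chains), but you only assert it. Since your counting argument already closes the surjectivity gap, this omission does not invalidate the proof; it only means the ``explicit inverse'' remark is not fully substantiated.
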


There is also a simple way to encode a Dyck path by a C2-pair, which we now
describe. Let $P$ be a Dyck path of order $n$ and let
$(i,j)\in\{1,\dotsc,n\}^2$ be a lattice point. Let $s_{i,j}$ be the axis-aligned
unit square whose top-right corner is the point $(i,j)$ (see
Figure~\ref{fig-tunnel}). We say that
$s_{i,j}$ is \emph{above} the Dyck path $P$ if the interior of $s_{i,j}$ is
above $P$ (with the boundary of $s_{i,j}$ possibly overlapping with $P$).
Notice that $s_{i,j}$ is above $P$ if and only if the $i$-th right-step of $P$
is preceded by at most $j$ up-steps. If
$s_{i,j}$  is not above $P$, we say that it is \emph{below}~$P$; see
Figure~\ref{fig-tunnel}.

\begin{lemma}\label{lem-dc2}
Let $P$ be a Dyck path of order $n$, and let $X$ be the set $\{1,2,\dotsc,n\}$.
Define a relational structure $\ct(P)=(T,R)$ on $X$ as follows: for any
$i,j\in X$, put $(i,j)\in T$ if and only if $i<j$ and $s_{i,j}$ is below $P$,
and put $(i,j)\in R$ if and only if $i<j$ and $s_{i,j}$ is above $P$. Then 
$\ct(P)$ is a C2-pair, and this construction is a bijection between
Dyck paths of order $n$ and isomorphism types of C2-pairs.
\end{lemma}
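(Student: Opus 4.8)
The plan is to encode a Dyck path $P$ by the heights of its right-steps and to convert the geometric ``above/below'' dichotomy into arithmetic, after which the axioms can be read off directly. For a path $P$ of order $n$ let $h_i$ be the height (the $y$-coordinate) of its $i$-th right-step, equivalently the number of up-steps that precede this right-step. Since $P$ moves only up and right, $h_1\le h_2\le\cdots\le h_n$; the defining condition $i\le j$ for points $(i,j)\in P$ gives $h_i\ge i$; and as $P$ ends at $(n,n)$ we get $h_n=n$. Within the open vertical strip $i-1<x<i$ the path consists solely of its $i$-th right-step, at height $h_i$, so the interior of $s_{i,j}$ lies above $P$ exactly when it sits above height $h_i$, i.e.\ when $h_i<j$. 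First I would record the two descriptions this yields: $(i,j)\in R\iff h_i<j$ (which already forces $i<j$, since $h_i\ge i$) and $(i,j)\in T\iff i<j\le h_i$.

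Next I would check the axioms C2a--C2c from these formulas. For C2b, for each $i<j$ exactly one of $h_i<j$ and $h_i\ge j$ holds, so $\{i,j\}$ is comparable by exactly one of $T$ and $R$; moreover no pair $(i,j)$ with $i\ge j$ belongs to $T\cup R$, so in fact $T\cup R$ equals the strict linear order $<$ on $\{1,\dots,n\}$. That makes $T\cup R$ a partial order, and $R$ is visibly irreflexive (as $h_i\ge i$) and transitive (from $h_i<j$ and $h_j<k$ one gets $h_i<j\le h_j<k$), giving C2a. The substantive step is C2c. If $iTj$, $iTk$, $jRk$ held, then $k\le h_i$, while $i<j$ yields $h_i\le h_j$ by monotonicity and $jRk$ yields $h_j<k$, so $k\le h_i\le h_j<k$, impossible; and if $jTi$, $kTi$, $jRk$ held, then $i\le h_j<k$ from $jTi$ and $jRk$, contradicting the inequality $k<i$ coming from $kTi$. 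Hence $\ct(P)$ is a C2-pair.

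Finally I would deduce the bijection from a rigidity remark. Because $T\cup R$ is the full linear order on the labels, and both $\ct(P)$ and $\ct(P')$ live on $\{1,\dots,n\}$, any isomorphism between them is an order-automorphism of $(\{1,\dots,n\},<)$ and hence the identity; thus $\ct(P)\cong\ct(P')$ forces $T=T'$ and $R=R'$. Since the height is recovered via $h_i=\max(\{i\}\cup\{j:(i,j)\in T\})$, this forces $h_i=h'_i$ for all $i$ and therefore $P=P'$, so $P\mapsto\ct(P)$ is injective on isomorphism types. As there are $C_n$ Dyck paths of order $n$ and, by Lemma~\ref{lem-C2enum}, exactly $C_n$ isomorphism types of C2-pairs on $n$ vertices, an injection between these equinumerous finite sets is a bijection. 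I expect the only delicate point to be this last ``up to isomorphism'' bookkeeping, which the linear order $T\cup R$ renders routine; the one genuinely computational place is translating the geometric dichotomy into the inequalities governing $h_i$ and being careful with the boundary case $h_i=j-1$, where the $i$-th right-step lies along the bottom edge of $s_{i,j}$ yet the square is still above $P$.
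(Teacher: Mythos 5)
Your proof is correct and takes essentially the same route as the paper's, which simply declares the axiom verification and the injectivity ``routine'' and then invokes the Catalan count from Lemma~\ref{lem-C2enum} to upgrade the injection to a bijection; you have merely made the routine part explicit via the height sequence $h_i$ and the observation that $T\cup R$ is the full linear order, which rigidifies isomorphisms. One minor remark: your characterization ``$s_{i,j}$ is above $P$ $\iff h_i<j$'' is the geometrically correct one and the one consistent with Observation~\ref{obs-stat-c2}, whereas the paper's parenthetical ``preceded by at most $j$ up-steps'' appears to be off by one.
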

\begin{proof}
It is routine to verify that $\ct(P)$ satisfies the axioms of C2-pairs, and
that distinct Dyck paths map to distinct C2-pairs. Since both Dyck paths of
order $n$ and C2-pairs of order $n$ are enumerated by Catalan numbers, the
mapping is indeed a bijection. 
%
\end{proof}

We will now focus on combinatorial statistics on Catalan objects. For our
purposes, a \emph{statistic} on a set $\cA$ is any nonnegative integer function
$f\colon \cA\to\bbN_0$. For an integer $k$ and a statistic $f$, we use the
notation $\cA[f=k]$ as shorthand for $\{x\in\cA: f(x)=k\}$. This notation
extends naturally to two or more statistics, e.g., if $g$ is another statistic
on $\cA$, then $\cA[f=k, g=\ell]$ denotes the set $\{x\in\cA: f(x)=k,
g(x)=\ell\}$. Two statistics $f$ and $g$ are \emph{equidistributed} (or
\emph{have the same distribution}) on $\cA$, if $\cA[f=k]$ and $\cA[g=k]$ have
the same cardinality for every~$k$. The statistics $f$ and $g$ have
\emph{symmetric joint distribution} on $\cA$, if $\cA[f=k, g=\ell]$ and
$\cA[f=\ell, g=k]$ have the same cardinality for every $k$ and~$\ell$. Clearly,
if $f$ and $g$ have symmetric joint distribution, they are equidistributed.

Recall that for a binary relation $R$ on a set $X$, an
element $x\in X$ is minimal, if there is no $y\in X\setminus\{x\}$ such that
$(y,x)$ is in $R$. Recall also that $\MMin{R}$ is the set of minimal elements of
$R$, and $\mmin{R}$ is the cardinality of~$\MMin{R}$. Similarly, $\MMax{R}$
is the set of maximal elements of $R$, and $\mmax{R}$ its cardinality.

Let $P$ be a Dyck path of order $n$, and let $\co(P)=(S,R)$ be the
corresponding C1-pair, as defined in Fact~\ref{fac-dc1}. We will be interested
in the four statistics $\mmin{S}$, $\mmax{S}$, $\mmin{R}$ and $\mmax{R}$. It
turns out that these statistics correspond to well known statistics of Dyck
paths. To describe the correspondence, we need more terminology.

For a Dyck path $P$, the \emph{initial ascent} is the maximal sequence of
up-steps preceding the first right-step, and the \emph{final descent} is the
maximal sequence of right-steps following the last up-step. Let $\asc(P)$
and $\des(P)$ denote the length of the initial ascent and final descent of $P$,
respectively. A \emph{return} of $P$ is a right-step whose right endpoint
touches the diagonal line $y=x$. Let $\ret(P)$ denote the number of returns
of~$P$. Finally, a \emph{peak} of $P$ is an up-step of $P$ that is immediately
followed by a right-step, and $\pea(P)$ denotes the number of peaks
of~$P$.

\begin{observation}\label{obs-stat-c1} Let $P$ be a Dyck path and let
$(S,R)=\co(P)$ be the corresponding C1-pair.
\begin{itemize}
 \item A tunnel $t$ of $P$ is minimal in $R$ if and only if the up-step
associated to $t$ precedes the first right-step of $P$. In particular 
$\mmin{R}=\asc(P)$. Symmetrically, $t$ is maximal in $R$ if and only if its
associated right-step succeeds the last up-step of $P$. Hence,
$\mmax{R}=\des(P)$.
\item A tunnel $t$ of $P$ is maximal in $S$ if and only if its associated
right-step is a return of $P$. Hence, $\mmax{S}=\ret(P)$.
\item A tunnel $t$ of $P$ is minimal in $S$ if and only if its associated
up-step is immediately succeeded by its associated right-step. Hence,
$\mmin{S}=\pea(P)$.
\end{itemize}
\end{observation}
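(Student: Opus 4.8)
The plan is to carry every statement back to the \emph{matching} structure that the tunnels of $P$ impose on its steps. Recall that every tunnel $t$ is associated to a unique up-step $u_t$ and a unique right-step $r_t$, and that (since each step of $P$ belongs to a unique tunnel and there are equally many tunnels, up-steps, and right-steps) the assignments $t\mapsto u_t$ and $t\mapsto r_t$ are both bijections. First I would record, directly from the definitions of \emph{nested} and \emph{precedes}, the order in which the associated steps occur along $P$: for distinct tunnels $t_1,t_2$ we have $(t_1,t_2)\in S$ exactly when $u_{t_2}$ precedes $u_{t_1}$, $u_{t_1}$ precedes $r_{t_1}$, and $r_{t_1}$ precedes $r_{t_2}$ (so the ``interval'' $[u_{t_1},r_{t_1}]$ sits inside $[u_{t_2},r_{t_2}]$), and we have $(t_1,t_2)\in R$ exactly when $u_{t_1},r_{t_1},u_{t_2},r_{t_2}$ occur in this order (so the two intervals are disjoint and appear consecutively). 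Since any two tunnels are comparable by exactly one of $S,R$, these two cases are exhaustive, which is just the non-crossing property already built into the definitions.

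With this dictionary the statements about $R$ are immediate. A tunnel $t$ fails to be $R$-minimal exactly when some tunnel $t'$ precedes it, that is, when $r_{t'}$ precedes $u_t$; since every right-step is of the form $r_{t'}$, this happens precisely when some right-step precedes $u_t$. Hence $t$ is $R$-minimal iff $u_t$ precedes the first right-step of $P$, i.e.\ iff $u_t$ lies in the initial ascent. As $t\mapsto u_t$ is a bijection, the number of $R$-minimal tunnels equals the number of up-steps in the initial ascent, which is $\asc(P)$, giving $\mmin{R}=\asc(P)$. The identity $\mmax{R}=\des(P)$ then follows by the mirror-image argument, with the roles of openers and closers, and of ``first'' and ``last'', interchanged.

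For $S$ I would use the containment description. A tunnel $t$ is $S$-maximal iff it is not nested in any other tunnel, i.e.\ iff its interval $[u_t,r_t]$ is not enclosed by another tunnel's interval. I would then check that this is equivalent to $r_t$ being a \emph{return}: tracking the quantity $j-i$ along $P$ (the number of up-steps minus right-steps read so far, equal to the height above the diagonal), this quantity drops to $0$ immediately after $r_t$ precisely when no pair encloses $(u_t,r_t)$. Thus $S$-maximal tunnels correspond bijectively to returns and $\mmax{S}=\ret(P)$. Dually, $t$ is $S$-minimal iff no tunnel is nested in it, i.e.\ iff $[u_t,r_t]$ encloses no other tunnel's interval; by the non-crossing property the region strictly between $u_t$ and $r_t$ is balanced, so it is empty exactly when $u_t$ is immediately followed by $r_t$ on $P$ --- that is, when $u_t$ is a \emph{peak}. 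Hence $\mmin{S}=\pea(P)$.

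The routine bookkeeping (the four ``precedes/nested'' inequalities and the two step-to-tunnel bijections) is mechanical. The one step that needs genuine care, and which I expect to be the main obstacle, is the translation of the \emph{outermost} and \emph{innermost} tunnel conditions into the path features \emph{return} and \emph{peak}: this is where one must use that the tunnel matching is non-crossing, so that ``interval not enclosed'' coincides with ``height returns to $0$'' and ``interval encloses nothing'' coincides with ``opener and closer are adjacent''. Everything else is a direct unwinding of the definition of $\co(P)$.
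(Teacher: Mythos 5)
Your argument is correct, and it is precisely the direct verification that the paper leaves implicit (the statement is given as an Observation with no proof): you identify the tunnels with the non-crossing matching of up-steps to right-steps and translate $R$-minimality/maximality and $S$-maximality/minimality into the initial-ascent, final-descent, return, and peak conditions, using that $t\mapsto u_t$ and $t\mapsto r_t$ are bijections to convert each characterization into the stated count. All four case analyses are consistent with the definitions of \emph{nested} and \emph{precedes} and with the convention of Fact~\ref{fac-dc1} that $(t_1,t_2)\in S$ means $t_1$ is nested within $t_2$, so nothing further is needed.
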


Suppose now that $(T,R)=\ct(P)$ is a C2-pair representing a Dyck path $P$ by
the bijection of Lemma~\ref{lem-dc2}. We again focus on the statistics
$\mmin{T}$, $\mmax{T}$, $\mmin{R}$ and $\mmax{R}$. It turns out that they
again correspond to the Dyck path statistics introduced above.

\begin{observation}\label{obs-stat-c2}
Let $P$ be a Dyck path of order $n$ and let $(T,R)=\ct(P)$ be the corresponding
C2-pair.
\begin{itemize}
 \item An element $i\in\{1,2,\dotsc,n\}$ is minimal in $R$ if and only if the
unit square $s_{1,i}$ is below $P$, which happens if and only if $\asc(P)\ge
i$. In particular, $\asc(P)=\mmin{R}$. Symmetrically, $\des(P)=\mmax{R}$.
\item An element $i\in\{1,2,\dotsc,n\}$ is minimal in $T$ if and only if for
every $j<i$, the unit square $s_{j,i}$ is above $P$, which is if and only if
the point $(i-1,i-1)$ belongs to $P$. It follows that $\mmin{T}=\ret(P)$.
Symmetrically, $i\in\{1,2,\dotsc,n\}$ is maximal in $T$ if and only if for
every $j>i$ the square $s_{i,j}$ is above $P$, which is if and only if the
point $(i,i)$ belongs to~$P$. It follows that $\mmax{T}=\ret(P)$.
\end{itemize}
\end{observation}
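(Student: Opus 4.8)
The plan is to reduce every statement to a single nondecreasing sequence attached to $P$. For $i\in\{1,\dots,n\}$ let $h_i$ be the number of up-steps preceding the $i$-th right-step of $P$, so that this right-step runs from $(i-1,h_i)$ to $(i,h_i)$. Three elementary facts about $P$ will drive the whole argument: the heights are nondecreasing, $h_1\le h_2\le\cdots\le h_n$; the path stays weakly above the diagonal, so $h_i\ge i$; and it ends at $(n,n)$, so $h_n=n$. Reading the position of $s_{i,j}$ against the $i$-th right-step, which occupies the horizontal segment at height $h_i$ in column $i$, we see that for $i<j$ the square $s_{i,j}$ is above $P$ exactly when $h_i<j$ and below $P$ exactly when $h_i\ge j$ (consistent with the criterion recorded before Lemma~\ref{lem-dc2}). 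Translating the definition of $\ct(P)$, this means $(i,j)\in R$ iff $i<j$ and $h_i<j$, and $(i,j)\in T$ iff $i<j$ and $h_i\ge j$.

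First I would handle $R$. An element $i$ is $R$-minimal iff no $k<i$ has $(k,i)\in R$, i.e.\ iff $h_k\ge i$ for every $k<i$; since the $h_k$ are nondecreasing, the smallest of them is $h_1=\asc(P)$, so the whole family of inequalities collapses to $\asc(P)\ge i$, which is exactly the assertion that $s_{1,i}$ is below $P$. Summing over $i$ then gives $\mmin{R}=|\{i\colon \asc(P)\ge i\}|=\asc(P)$. Dually, $i$ is $R$-maximal iff $h_i\ge k$ for every $k>i$, and taking $k=n$ shows this is equivalent to $h_i=n$; the indices $i$ with $h_i=n$ are precisely the right-steps lying after the last up-step, i.e.\ the final descent, so $\mmax{R}=\des(P)$.

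Next I would treat $T$ by the same monotonicity trick, now applied to the opposite end of the sequence. The element $i$ is $T$-minimal iff $s_{k,i}$ is above $P$ for every $k<i$, i.e.\ iff $h_k<i$ for all $k<i$; the binding instance is $k=i-1$, giving $h_{i-1}\le i-1$, which together with $h_{i-1}\ge i-1$ forces $h_{i-1}=i-1$. This last equality says precisely that the $(i-1)$-th right-step ends at the diagonal point $(i-1,i-1)$, i.e.\ that $(i-1,i-1)\in P$ (the case $i=1$ being the trivial start point $(0,0)$). Symmetrically, $i$ is $T$-maximal iff $h_i<k$ for all $k>i$; the binding instance $k=i+1$ forces $h_i\le i$, hence $h_i=i$, which says $(i,i)\in P$.

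The one step that requires genuine care is the final count, where the two characterizations of $T$-extremal elements refer to different diagonal points. As $i$ runs over $\{1,\dots,n\}$ the points $(i-1,i-1)$ sweep out $(0,0),(1,1),\dots,(n-1,n-1)$, whereas the points $(i,i)$ sweep out $(1,1),\dots,(n,n)$. For $1\le\ell\le n$ the diagonal point $(\ell,\ell)$ lies on $P$ iff the right-step ending there is a return, while $(0,0)$ and $(n,n)$ always lie on $P$. Thus $\mmax{T}$ counts exactly the returns, giving $\mmax{T}=\ret(P)$ directly, and for $\mmin{T}$ the always-present contribution of $(0,0)$ exactly offsets the missing contribution of $(n,n)$ (itself always a return), so that $\mmin{T}=\ret(P)$ as well. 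I expect this off-by-one matching of the two diagonal ranges---rather than any of the order-theoretic reductions, which are all immediate from monotonicity---to be the only delicate point.
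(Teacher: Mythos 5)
Your proof is correct and takes essentially the same route the paper intends: the Observation is stated without a separate proof, its inline chain of equivalences being exactly the direct geometric verification you carry out rigorously via the height sequence $h_i$, its monotonicity, and the bound $h_i\ge i$. One small point: your criterion ``$s_{i,j}$ is above $P$ iff $h_i<j$'' is the correct one, but it in fact corrects, rather than matches, the paper's parenthetical remark before Lemma~\ref{lem-dc2}, which says the $i$-th right-step is ``preceded by at most $j$ up-steps'' where it should say ``fewer than $j$'' (with ``at most $j$'' the map $P\mapsto\ct(P)$ would not even be injective).
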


The statistics $\asc$, $\des$, $\ret$ and $\pea$ are all very well studied. We
now collect some known facts about them. 

\begin{fact}\label{fac-catstat} Let $\D_n$ be the set of Dyck paths of
order~$n$.
\begin{itemize}
 \item The sets $\D_n[\asc=k]$, $\D_n[\des=k]$ and $\D_n[\ret=k]$ all have
cardinality $\frac{k}{2n-k}\binom{2n-k}{n}$. See \cite[A033184]{oeis}. Among the 
three statistics $\asc$, $\des$ and $\ret$, any two have 
symmetric joint distribution on~$\D_n$.
 \item The set $\D_n[\pea=k]$ has cardinality
$N(n,k)=\frac{1}{k}\binom{n-1}{k-1}\binom{n}{k-1}$. The numbers $N(n,k)$ are
known as Narayana numbers \cite[A001263]{oeis}. The Narayana numbers satisfy 
$N(n,k)=N(n,n-k+1)$, and hence $\D_n[\pea=k]$ has the same cardinality as 
$\D_n[\pea=n-k+1]$.
 \item In fact, for any $n$, $k$, $\ell$, and $m$, the set
 $\D_n[\asc=k,\ret=\ell,\pea=m]$ has the same cardinality as the set 
$\D_n[\asc=\ell,\ret=k,\pea=n-m+1]$. This follows, e.g., from an
involution on Dyck paths constructed by Deutsch~\cite{DeutschInvol}.
\end{itemize}
\end{fact}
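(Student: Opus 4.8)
The plan is to separate the three cardinality claims, the Narayana claim, and the joint-distribution claims, establishing the single-variable counts first by elementary lattice-path arguments and then upgrading to the joint statements using explicit bijections. First I would pin down the common single-variable distribution of $\asc$, $\des$ and $\ret$. A path in $\D_n[\asc\ge k]$ is precisely one beginning with $k$ consecutive up-steps, i.e.\ a lattice path from $(0,k)$ to $(n,n)$ staying in the region $i\le j$; reflecting the paths that first violate this constraint across the line $i=j+1$ gives $|\D_n[\asc\ge k]|=\binom{2n-k}{n}-\binom{2n-k}{n+1}$, and the successive difference $|\D_n[\asc\ge k]|-|\D_n[\asc\ge k+1]|$ collapses to the ballot number $\frac{k}{2n-k}\binom{2n-k}{n}$. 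For $\des$ I would invoke the reversal map $P\mapsto P^{r}$, which reads the step sequence of $P$ backwards while exchanging up-steps with right-steps: this is an involution on $\D_n$ with $\asc(P^{r})=\des(P)$, $\des(P^{r})=\asc(P)$, and it fixes $\ret$ because it reflects the diagonal contacts of $P$ across the anti-diagonal. For $\ret$ I would use the first-return decomposition, writing a path with $\ret=\ell$ as an ordered concatenation of $\ell$ primitive Dyck paths, so that $\sum_n|\D_n[\ret=\ell]|x^n=(xC(x))^\ell$ for the Catalan generating function $C(x)=\sum_{n\ge0}C_nx^n$; extracting the coefficient reproduces the same ballot number.

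For the peak statistic I would encode a path with $\pea=m$ by its two interleaved compositions: the list of maximal up-runs $(a_1,\dots,a_m)$ and the list of maximal right-runs $(b_1,\dots,b_m)$, each a composition of $n$ into $m$ positive parts subject to the dominance condition $a_1+\dots+a_i\ge b_1+\dots+b_i$ for all $i<m$. Counting such pairs by a cycle-lemma (or Lindström--Gessel--Viennot) argument yields the Narayana number $N(n,m)=\frac1m\binom{n-1}{m-1}\binom{n}{m-1}$. The symmetry $N(n,k)=N(n,n-k+1)$ I would then read off directly from the product formula, since it reduces to the binomial identity $\frac1k\binom{n}{k-1}=\frac1{n-k+1}\binom{n}{k}$.

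It remains to establish the joint statements. The symmetric joint distribution of $(\asc,\des)$ is immediate from the reversal involution above, which exchanges the two statistics and hence matches $\D_n[\asc=k,\des=\ell]$ with $\D_n[\asc=\ell,\des=k]$. For the pairs involving $\ret$ and for the full triple statement, the key input is Deutsch's involution $\phi$ on $\D_n$, which I would use essentially as a black box: it satisfies $\asc(\phi(P))=\ret(P)$, $\ret(\phi(P))=\asc(P)$ and $\pea(\phi(P))=n+1-\pea(P)$. Projecting onto the first two coordinates gives the symmetric joint distribution of $(\asc,\ret)$, and sandwiching $\phi$ between two applications of the reversal map (which trades $\des$ for $\asc$ and back while preserving $\ret$) produces a chain of equinumerosities proving the $(\des,\ret)$ symmetry; the third coordinate of $\phi$ simultaneously reproves the Narayana symmetry of the second bullet. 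The third bullet is then exactly the assertion that $\phi$ performs all three transformations $\asc\leftrightarrow\ret$ and $\pea\mapsto n+1-\pea$ at once.

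The reflection count, the arch decomposition, the composition encoding, and the reversal bijection are all routine. The main obstacle is the third bullet: producing a \emph{single} map that transposes $\asc$ and $\ret$ while complementing $\pea$ is genuinely delicate, since the three statistics are controlled by very different features of a path---its opening run, its contacts with the diagonal, and its corners---and nothing in the elementary arguments above couples them. This is precisely the content of Deutsch's involution, which I would import rather than reconstruct.
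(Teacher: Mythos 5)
Your proposal is correct, but note that the paper does not actually prove this statement: it is labelled a \emph{Fact} and is supported entirely by citations (OEIS entries A033184 and A001263 for the single-statistic distributions, and Deutsch's involution for the joint one), so there is no internal proof to compare against. What you have done is reconstruct proofs of the citable ingredients, and your reconstruction is sound. The reflection count $|\D_n[\asc\ge k]|=\binom{2n-k}{n}-\binom{2n-k}{n+1}$ and its difference, the first-return decomposition giving $(xC(x))^\ell$, the up-run/right-run encoding for Narayana numbers, and the reversal involution (which is exactly the paper's ``trivial involution'' restricted to Dyck paths, i.e.\ reflection in the line $x+y=n$) all check out; in particular your observation that reversal preserves $\ret$ while exchanging $\asc$ and $\des$ is correct, since diagonal contacts map to diagonal contacts. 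The one genuinely nontrivial ingredient --- a single map realizing $\asc\leftrightarrow\ret$ together with $\pea\mapsto n+1-\pea$ --- you import from Deutsch exactly as the paper does, and your conjugation $r\circ\phi\circ r$ to obtain the $(\des,\ret)$ symmetry from the $(\asc,\ret)$ one is a correct and necessary supplement, since the first bullet asserts symmetry for \emph{any} two of the three statistics. In short, your argument is a legitimate filling-in of a result the paper treats as known; the only caveat is that the precise statistic-exchanging properties you attribute to Deutsch's involution are taken on faith from the reference, which is the same position the paper itself is in.
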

For future reference, we rephrase some of these facts in the terminology of
Catalan pairs. Recall that $\Co_n$ and $\Ct_n$ denote respectively the set of
C1-pairs and the set of C2-pairs of order~$n$.

\begin{proposition}\label{pro-cat1}
For any $n\ge 0$, there is a bijection $\psi\colon\Co_n\to\Ct_n$ with the
following properties. Let $(S,R)\in\Co_n$ be a C1-pair, and let
$(T',R')\in\Ct_n$ be its image under~$\psi$. Then
\begin{itemize}
 \item $\mmax{S}=\mmax{T'}$, and 
 \item $\mmax{R}=\mmax{R'}$.
\end{itemize}
\end{proposition}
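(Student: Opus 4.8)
The plan is to realize $\psi$ by routing through Dyck paths, exploiting the two encodings already constructed. By Fact~\ref{fac-dc1}, the map $\co\colon\D_n\to\Co_n$ sending a Dyck path $P$ to the C1-pair $\co(P)=(S,R)$ is a bijection, and by Lemma~\ref{lem-dc2}, the map $\ct\colon\D_n\to\Ct_n$ sending $P$ to the C2-pair $\ct(P)=(T,R)$ is also a bijection. I would therefore simply define
\[
\psi = \ct\circ\co^{-1}\colon \Co_n\to\Ct_n,
\]
which is automatically a bijection, being a composition of two bijections between finite sets of the same cardinality $C_n$.

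To verify the two statistic identities, I would take an arbitrary C1-pair $(S,R)\in\Co_n$, let $P$ be the unique Dyck path with $\co(P)=(S,R)$, and observe that by construction $\psi(S,R)=\ct(P)=(T',R')$. Now I invoke the two preceding observations directly. Observation~\ref{obs-stat-c1} gives $\mmax{S}=\ret(P)$ and $\mmax{R}=\des(P)$, while Observation~\ref{obs-stat-c2} gives $\mmax{T'}=\ret(P)$ and $\mmax{R'}=\des(P)$. Comparing these four equalities yields $\mmax{S}=\ret(P)=\mmax{T'}$ and $\mmax{R}=\des(P)=\mmax{R'}$, which is exactly what the proposition asserts.

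The reason there is no genuine obstacle here is that both statistics on each side collapse onto the same pair of familiar Dyck-path statistics: the maximal-element counts $\mmax{S}$ and $\mmax{T'}$ both equal the number of returns $\ret(P)$, and the counts $\mmax{R}$ and $\mmax{R'}$ both equal the final-descent length $\des(P)$. Consequently the \emph{identity} map on Dyck paths already transports the statistics correctly, so the entire combinatorial content of the proposition is contained in Observations~\ref{obs-stat-c1} and~\ref{obs-stat-c2}. The only point worth checking carefully is that, when I pass from $(S,R)$ to $\ct(P)$, I am genuinely applying both encodings to the \emph{same} underlying Dyck path $P$; but this is immediate from the definition $\psi=\ct\circ\co^{-1}$, so the verification reduces to quoting the two observations and chaining the resulting equalities.
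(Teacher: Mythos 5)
Your proposal is correct and is essentially identical to the paper's own proof: the paper also defines $\psi$ by $(S,R)=\co(P)\mapsto\ct(P)$ for the unique Dyck path $P$, and then quotes Observations~\ref{obs-stat-c1} and~\ref{obs-stat-c2} to get $\mmax{S}=\ret(P)=\mmax{T'}$ and $\mmax{R}=\des(P)=\mmax{R'}$. Nothing is missing.
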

\begin{proof}
Given a C1-pair $(S,R)\in\Co_n$, fix the Dyck path $P\in\D_n$ satisfying
$(S,R)=\co(P)$, and define the C2-pair $(T',R')=\ct(P)$. The mapping
$(S,R)\mapsto(T',R')$ is the bijection~$\psi$. By Observations~\ref{obs-stat-c1}
and \ref{obs-stat-c2}, we have $\mmax{S}=\ret(P)=\mmax{T'}$, and
$\mmax{R}=\des(P)=\mmax{R'}$.
\end{proof}

\begin{proposition}\label{pro-cat2}
For each $n\ge 0$, the two statistics $\mmax{S}$ and $\mmax{R}$ have symmetric
joint distribution on C1-pairs $(S,R)\in\Co_n$.
\end{proposition}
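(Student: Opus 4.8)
The plan is to transport the statement across the bijection $\co$ between C1-pairs and Dyck paths, turning it into an equidistribution statement about familiar Dyck-path statistics that is already available to us. First I would recall that, by Fact~\ref{fac-dc1}, the map $P\mapsto\co(P)$ is a bijection between Dyck paths of order $n$ and C1-pairs of order $n$; write $(S,R)=\co(P)$. By Observation~\ref{obs-stat-c1}, under this correspondence the two statistics in question read off directly from $P$: namely $\mmax{S}=\ret(P)$ and $\mmax{R}=\des(P)$. Consequently, for all $k,\ell$ the set $\Co_n[\mmax{S}=k,\mmax{R}=\ell]$ is in bijection with $\D_n[\ret=k,\des=\ell]$, and likewise with the indices swapped.

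The claim then reduces to showing that $\ret$ and $\des$ have symmetric joint distribution on $\D_n$, i.e.\ that $|\D_n[\ret=k,\des=\ell]|=|\D_n[\ret=\ell,\des=k]|$ for all $k,\ell$. But this is exactly a special case of Fact~\ref{fac-catstat}, which asserts that any two of the three statistics $\asc$, $\des$, $\ret$ have symmetric joint distribution on $\D_n$. Pushing this symmetry back through the bijection $\co$ yields $|\Co_n[\mmax{S}=k,\mmax{R}=\ell]|=|\Co_n[\mmax{S}=\ell,\mmax{R}=k]|$, which is precisely the assertion of the proposition.

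Because the only combinatorial content sits in Fact~\ref{fac-catstat}, I do not expect any genuine obstacle here; the proof is a routine transfer of a known symmetry along the established bijection. The one point deserving care is the verification that $\mmax{S}$ and $\mmax{R}$ really correspond to $\ret$ and $\des$ rather than to some other pair of statistics, but this is exactly what Observation~\ref{obs-stat-c1} supplies. Should one prefer a self-contained argument that avoids citing the joint symmetry, the natural alternative would be to exhibit an explicit involution on $\D_n$ that interchanges the number of returns with the length of the final descent while preserving enough of the remaining structure; constructing such an involution directly, rather than quoting it, would be the most technical part of that route.
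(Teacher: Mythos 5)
Your proof is correct and is essentially identical to the paper's: the paper also deduces the proposition by combining Observation~\ref{obs-stat-c1} (which identifies $\mmax{S}=\ret(P)$ and $\mmax{R}=\des(P)$ under the bijection of Fact~\ref{fac-dc1}) with the first part of Fact~\ref{fac-catstat} on the symmetric joint distribution of $\ret$ and $\des$. You have simply written out the transfer along the bijection in more detail than the paper's one-line proof.
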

\begin{proof} This follows from Observation~\ref{obs-stat-c1} and from the
first part of Fact~\ref{fac-catstat}.
\end{proof}

\section{Fishburn triples}\label{sec-fish}

In this section, we will show that objects from certain Fishburn-enumerated
families can be represented by a triple $(T,S,R)$ of relations, satisfying
axioms which generalize the axioms of Catalan pairs. This will allow us to
extend the statistics $\asc$, $\des$, $\ret$ and $\pea$ to Fishburn objects,
where they admit a natural combinatorial interpretation. We will then show, as
the main results of this paper, that some of the classical equidistribution
results listed in Fact~\ref{fac-catstat} can be extended to Fishburn objects.

\begin{definition} A \emph{Fishburn triple} (or \emph{F-triple} for short) is a
relational structure $(T,S,R)$ on a set $X$ satisfying the following axioms:
\item[Fa)] $S$, $R$, and $T\cup R$ are partial orders on $X$.
\item[Fb)] Any two distinct elements of $X$ are comparable by exactly one of
$T$, $S$, or~$R$.
\item[C1c)] For any three distinct elements $x,y,z$ satisfying $xSy$ and $yRz$
we have $xRz$.
\item[C1c*)] For any three distinct elements $x,y,z$ satisfying $xSy$ and $zRy$
we have~$zRx$.
\item[C2c)] There are no three distinct elements $x,y,z\in X$ satisfying $xTy$,
$xTz$ and $yRz$, and also no three elements $x,y,z\in X$ satisfying $yTx$,
$zTx$ and $zRy$.
\end{definition}
As with Catalan pairs, the Fishburn triples may equivalently be described as
structures avoiding certain substructures of size at most three; see
Figure~\ref{fig-forb2}.

\begin{figure}
 \includegraphics[width=\textwidth]{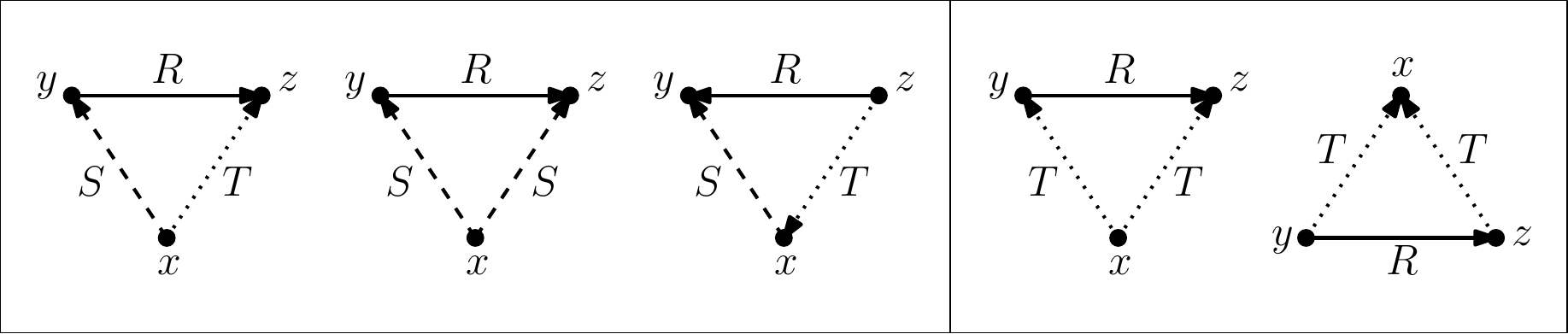}
\caption{Left: the three minimal structures satisfying axioms Fa and Fb, but
not C1c or C1c*. Right: the two structures excluded by axiom
C2c.}\label{fig-forb2}
\end{figure}

Observe that a relational structure $(S,R)$ is a C1-pair if and only if
$(\emptyset,S,R)$ is an F-triple, and a relational structure $(T,R)$ is a
C2-pair if and only if $(T,\emptyset,R)$ is an F-triple. In this way,
F-triples may be seen as a common generalization of the two types of
Catalan pairs. 

Note also that $(T,S,R)$ is an F-triple if and only if $(T^{-1},S, R^{-1})$ is
an F-triple. We will refer to the mapping $(T,S,R)\mapsto(T^{-1},S, R^{-1})$ as
the \emph{trivial involution} on F-triples. We may restrict the trivial
involution to C1-pairs and C2-pairs, with a C1-pair $(S,R)$ being mapped to
$(S,R^{-1})$ and a C2-pair $(T,R)$ being mapped to $(T^{-1},R^{-1})$. When
representing Catalan pairs of either type as Dyck paths, as we did in
Subsection~\ref{ssec-catstats}, the trivial involution acts on Dyck paths of
order $n$ as a mirror reflection whose axis is the line $x+y=n$.

\begin{lemma}\label{lem-fish} If $(T,S,R)$ is an F-triple on a vertex set
$X$, then $R$ is an interval order. Let $M$ be the Fishburn matrix of $R$. Let
$x$ and $y$ two elements of $X$, represented by cells $c_x$ and $c_y$ of~$M$. 
\begin{enumerate}
 \item If $c_x$ is strictly SW of $c_y$, then $(x,y)\in S$.
 \item If $c_x$ is strictly NW of $c_y$, then $(x,y)\in T$.
 \item If $(x,y)\in S$, then $c_x$ is weakly SW of $c_y$.
 \item If $(x,y)\in T$, then $c_x$ is weakly NW of $c_y$.
\end{enumerate}
\end{lemma}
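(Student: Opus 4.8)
The plan is to prove the statement in two stages: first that $R$ is \tpt-free (hence an interval order, so that the matrix $M$ is defined), and then the four positional claims by a single uniform template. Throughout I will use only the axioms Fa, Fb, C1c, C1c*, C2c, the transitivity of $T\cup R$, and the defining feature of the minimal interval representation, namely that the magnitude $m$ of $R$ is such that every value in $\{1,\dots,m\}$ occurs both as a left endpoint (a nonzero row) and as a right endpoint (a nonzero column) of some cell of~$M$.

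For the first stage, suppose for contradiction that $R$ contains a copy of \tpt\ induced by four distinct elements with $aRb$, $cRd$, and the four cross pairs $(a,c),(a,d),(b,c),(b,d)$ all $R$-incomparable. The key observation — which I expect to be the main obstacle — is that no cross pair can be $S$-comparable. Indeed, each of the eight possibilities (say $aSc$, $cSa$, and so on) combines with one of the chain relations $aRb$ or $cRd$ through axiom C1c or C1c* to force a forbidden $R$-comparability between two cross-incomparable elements; for instance $aSc$ together with $cRd$ yields $aRd$ by C1c, while $aSd$ together with $cRd$ yields $cRa$ by C1c*. Having ruled out $S$, all four cross pairs are $T$-comparable, which is precisely the configuration handled in Lemma~\ref{lem-C2}: taking $aTc$ (without loss of generality, using the symmetry that swaps the two chains) and applying transitivity of $T\cup R$ to $aTc$ and $cRd$ gives $aTd$ (since $a,d$ are $R$-incomparable), and then $aTc$, $aTd$, $cRd$ contradict axiom~C2c. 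Hence $R$ is \tpt-free, so $M$ exists.

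For the second stage, write $c_x=(i_x,j_x)$ and $c_y=(i_y,j_y)$, recalling that the row index is the left endpoint and the column index the right endpoint, so $uRw$ holds exactly when $j_u<i_w$. The uniform device is this: whenever an endpoint inequality such as $j_x<j_y$ holds, the magnitude property supplies a nonzero cell with left endpoint $j_x+1$ (which lies in $\{1,\dots,m\}$ by that inequality), and any element $v$ it represents satisfies $xRv$ but not $yRv$; symmetrically, an inequality $i_x>i_y$ produces, from a nonzero cell with right endpoint $i_x-1$, an element $u$ with $uRx$ but not $uRy$. For the forward claims (1) and (2) I would generate both witnesses from the two strict inequalities defining the strictly SW (resp.\ NW) position, and then eliminate each of the three ``wrong'' relations between $x$ and $y$: a misdirected $S$-edge is killed by C1c or C1c* applied with $u$ or $v$, and a misdirected $T$-edge is killed by first deducing a further $T$-edge from it via transitivity of $T\cup R$ and then invoking C2c; the lone survivor is the asserted $(x,y)\in S$ or $(x,y)\in T$. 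For the reverse claims (3) and (4) I would instead assume an endpoint inequality fails — e.g.\ to get $j_x\le j_y$ when $xSy$, assume $j_x>j_y$, produce $v$ with $yRv$ but not $xRv$, and derive $xRv$ from $xSy$ and $yRv$ by C1c, a contradiction — and the remaining inequalities ($i_x\ge i_y$ under $xSy$; both $i_x\le i_y$ and $j_x\le j_y$ under $xTy$) follow by the same recipe using C1c*, C2c and transitivity.

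One point is worth flagging: the reverse arguments actually establish the non-strict inequalities $i_x\ge i_y,\ j_x\le j_y$ (for $S$) and $i_x\le i_y,\ j_x\le j_y$ (for $T$), which coincide with weakly SW and weakly NW precisely when $c_x\ne c_y$. The degenerate case $c_x=c_y$ satisfies these inequalities trivially, so indistinguishable elements need no separate treatment.
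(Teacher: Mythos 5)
Your proof is correct and follows essentially the same route as the paper's: establish \tpt-freeness by eliminating $S$-comparabilities among cross pairs via C1c/C1c* and $T$-comparabilities via transitivity of $T\cup R$ plus C2c, then prove each positional claim by producing witness elements from nonzero rows/columns of $M$ and ruling out all but one relation between $x$ and $y$. The only cosmetic difference is that you build the witnesses for claims (1) and (2) directly from the endpoint inequalities, where the paper invokes Lemma~\ref{lem-swse} (whose proof constructs them the same way); your explicit handling of the degenerate case $c_x=c_y$ is a welcome extra precision.
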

\begin{proof}
Let $(T,S,R)$ be an F-triple on a set~$X$. Let us prove that $R$
is \tptp-free. For contradiction, assume $X$ contains four distinct elements
$x,x',y,y'$, such that $(x,x')$ and $(y,y')$ belong to $R$, while all other
pairs among these four elements are $R$-incomparable. By axiom Fb, $x$ and $y$
are either $S$-comparable or $T$-comparable. However, if $xSy$ holds, then C1c
implies that $xRy'$ holds as well, which is impossible. If $xTy$
holds, then transitivity of $T\cup R$ implies $xTy'$ or $xRy'$. However, $xTy'$
is excluded by axiom C2c and $xRy'$ contradicts the choice of $x,x',y,y'$. This
shows that $R$ is \tptp-free.

We now prove the four numbered claims of the lemma. Let $M$ be the Fishburn
matrix of $R$, and let $x$ and $y$ be two elements of $X$ represented by cells
$c_x=(i_x,j_x)$ and $c_y=(i_y,j_y)$. 

To prove the first claim, suppose that $c_x$ is strictly SW of~$c_y$. By
Lemma~\ref{lem-swse}, there are two elements $u,v\in X$ such that $u,v,x,y$
induce a copy of \tpo, where $(u,x)$, $(x,v)$, and $(u,v)$ belong to $R$, and
$y$ is $R$-incomparable to $x$, $u$, and~$v$. Now $ySx$ would imply $yRv$ by
C1c, which is impossible, since $y$ and $v$ are $R$-incomparable. Next, $yTx$
would imply $yTv$ by transitivity of $T\cup R$, contradicting C2c. Similarly,
$xTy$ implies $uTy$, again contradicting $C2c$. This leaves $xSy$ as the only
option.

To prove the second claim, suppose that $c_x$ is strictly NW of $c_y$. By
Lemma~\ref{lem-swse}, there are elements $u$ and $v$ such that $u$, $v$, $x$ and
$y$ induce a copy of \N\ in $R$, with precisely the three pairs $(x,v)$, $(u,v)$
and $(u,y)$ belonging to~$R$. We want to prove that $(x,y)$ is in $T$. Consider
the alternatives: $yTx$ forces $yTv$ by transitivity of $T\cup R$, contradicting
C2c; on the other hand $xSy$ forces $xRu$ by C1c*, while $ySx$ forces $yRv$ by
C1c, which both contradict the choice of $u$ and $v$. We conclude that $yTx$ is
the only possibility.

For the third claim, proceed by contradiction, and assume that $xSy$ holds, but
$c_x$ is not weakly SW of~$c_y$. This means that $i_x<i_y$ or $j_x>j_y$.
Suppose that $i_x<i_y$, the other case being analogous. Let $c_z$ be a nonzero
cell of $M$ in column $i_x$, and let $z$ be an element represented by~$c_z$. By
the choice of $z$, we know that $zRy$ holds and that $x$ and $z$ are
$R$-incomparable. This contradicts axiom C1c*.

Finally, suppose that $xTy$ holds and $c_x$ is not weakly NW from~$c_y$.
Since $x$ and $y$ are $R$-incomparable, this means that $i_x>i_y$ or $j_x>j_y$.
Suppose that $i_x>i_y$. Let $z$ be an element represented by a cell in column
$i_y$, so that $zRx$ holds, while $y$ and $z$ are $R$-incomparable. Transitivity
of $T\cup R$ implies $zTy$, contradicting C2c.
\end{proof}

Suppose we are given an interval order $R$ on a set $X$, with a corresponding
Fishburn matrix $M$, and we would like to extend $R$ into an F-triple $(T,S,R)$.
The four conditions in Lemma~\ref{lem-fish} put certain constraints on $T$ and
$S$, but in general they do not determine $T$ and $S$ uniquely.
In particular, if $x$ and $y$ are two elements represented by cells $c_x$ and
$c_y$ that belong to the same row or column of $M$, then Lemma~\ref{lem-fish}
does not say whether $x$ and $y$ should be $T$-comparable or $S$-comparable. 

To obtain a unique Fishburn triple for a given $R$, we need to impose additional
restrictions to disambiguate the relations between elements represented by
cells in the same row or column of~$M$. We will consider two ways of imposing
such restrictions. In the first way, all ambiguous pairs end up
$S$-comparable, while in the second way they will be $T$-comparable.

\begin{definition}\label{def-fsft}
Let $R$ be an interval order on a set $X$, and let $M$ be its Fishburn
matrix. The \emph{Fishburn triple of type 1} of $R$ (or \emph{F1-triple of
$R$}) is the relational structure $(T_1,S_1,R)$ on the set $X$, in which $T_1$ 
and $S_1$ are determined by these rules:
\begin{itemize}
 \item For any two elements $x,y\in X$, represented by distinct cells $c_x$ and
$c_y$ in $M$, we have $xT_1y$ if and only if $c_x$ is strictly NW from $c_y$, 
and
we have $xS_1y$ if and only if $c_x$ is weakly SW from $c_y$.
 \item If $c$ is a cell of $M$ and $X_c\subseteq X$ the set of elements
represented by $c$, then $S_1$ induces a chain in $X_c$ and $T_1$ induces an
antichain in~$X_c$.
\end{itemize}
Similarly, for $R$ and $M$ as above, a \emph{Fishburn triple of type 2} of $R$
(or \emph{F2-triple of $R$}) is the relational structure $(T_2,S_2,R)$ on $X$,
with $T_2$ and $S_2$ defined as follows:
\begin{itemize}
 \item For any two elements $x,y\in X$, represented by distinct cells $c_x$ and
$c_y$ in $M$, we have $xT_2y$ if and only if $c_x$ is weakly NW from $c_y$, and
we have $xS_2y$ if and only if $c_x$ is strictly SW from $c_y$.
 \item If $c$ is a cell of $M$ and $X_c\subseteq X$ the set of elements
represented by $c$, then $S_2$ induces an antichain in $X_c$ and $T_2$ induces a
chain in~$X_c$.
\end{itemize}
\end{definition}

Note that the F1-triple and the F2-triple are up to isomorphism uniquely
determined by the interval order~$R$.

\begin{lemma}\label{lem-fsft}
For an interval order $R$, its F1-triple $(T_1,S_1,R)$ and its F2-triple
$(T_2,S_2,R)$ are Fishburn triples. 
\end{lemma}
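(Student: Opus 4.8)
The plan is to reduce every axiom to arithmetic on cell coordinates. Write $c_x=(i_x,j_x)$ for the cell representing $x$, and recall from Observation~\ref{obs-fish} that $xRy$ holds exactly when $c_x$ is smaller than $c_y$, i.e. $j_x<i_y$. The positional clauses of Definition~\ref{def-fsft} then translate into inequalities: ``$xS_1y$'' (same cell, or $c_x$ weakly SW of $c_y$) is equivalent to ``$c_x,c_y$ incomparable and $i_x\ge i_y$, $j_x\le j_y$'', while ``$xT_1y$'' is equivalent to ``$c_x$ strictly NW of $c_y$'', that is $i_x<i_y$, $j_x<j_y$ together with the incomparability inequality $i_y\le j_x$; dually ``$xT_2y$'' becomes ``$c_x,c_y$ incomparable and $i_x\le i_y$, $j_x\le j_y$'' and ``$xS_2y$'' becomes ``$c_x$ strictly SW of $c_y$'', i.e. $i_x>i_y$, $j_x<j_y$. (On two above-diagonal cells the inequalities $i_x\ge i_y$, $j_x\le j_y$ already force incomparability, so in the $S$-clauses incomparability is automatic.) With these equivalences each axiom becomes a short deduction. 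I would carry out the F1-triple in full and obtain the F2-triple by the mirror argument, interchanging NW with SW and the roles of $T$ and $S$.

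For axiom Fb I would split on whether $x$ and $y$ share a cell. If they do, they are $R$-incomparable and the chain/antichain convention of Definition~\ref{def-fsft} makes them comparable by exactly one of $T,S$. If their cells differ, then either the cells are comparable, giving $R$-comparability, or they are incomparable, in which case their relative position lies in exactly one of the disjoint classes NW/SE and SW/NE, each assigned to a single relation. Mutual exclusivity is immediate, since the defining inequalities of the three relations are pairwise incompatible.

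Axiom Fa carries the only real work. The component $R$ is an interval order by hypothesis. The relation $S_1$ is obtained from the cell relation ``$i_c\ge i_d$ and $j_c\le j_d$'', which is plainly a partial order on cells, by blowing up each cell into an $S_1$-chain, so $S_1$ is a partial order. For $T_1\cup R$ I would show that the cell relation ``$c$ is strictly NW of $d$, or $c$ is smaller than $d$'' is a strict partial order; irreflexivity and asymmetry are clear because either alternative forces $i_c<i_d$. Transitivity splits into four cases according to the type of each of the two steps, and three of them are immediate since a smaller-than step pushes $j$ below the next row index. I expect the remaining case, two consecutive strictly-NW steps, to be the main obstacle: here $j_c<i_e$ may fail, so one argues that if $c$ and $e$ are comparable then $c$ is necessarily the smaller cell, and otherwise $i_c<i_e$, $j_c<j_e$ with incomparability, so the relation holds either way. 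Since $T_1$ is an antichain within each cell, $T_1\cup R$ is the antichain blow-up of this cell relation and hence a partial order. The partial-order property of $S_2$ and $T_2\cup R$ follows by the symmetric computation.

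It remains to check C1c, C1c*, and C2c by direct inequality chains. For C1c, $xS_1y$ gives $j_x\le j_y$ and $yRz$ gives $j_y<i_z$, so $j_x<i_z$, i.e. $xRz$; C1c* is dual, using $i_x\ge i_y$ from $xS_1y$ and $j_z<i_y$ from $zRy$ to get $j_z<i_x$. For C2c, assuming $xT_1y$, $xT_1z$ and $yRz$, the incomparability from $xT_1z$ gives $i_z\le j_x$, while $xT_1y$ gives $j_x<j_y$ and $yRz$ gives $j_y<i_z$, yielding the impossible chain $i_z\le j_x<j_y<i_z$; the second clause is excluded by the analogous chain $i_x\le j_z<i_y<i_x$. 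The F2 versions are identical once one notes that $xT_2y$ always places $c_x,c_y$ in incomparable (possibly equal) cells with $i_x\le i_y$, $j_x\le j_y$. Assembling these verifications shows that both triples satisfy Fa through C2c, hence are Fishburn triples.
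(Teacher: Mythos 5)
Your proof is correct and takes essentially the same approach as the paper: both reduce the axioms C1c, C1c*, and C2c to short chains of inequalities on the row/column indices of the representing cells. The only difference is that you spell out the verification of axioms Fa and Fb (in particular the transitivity of $T_1\cup R$, where your case analysis of two consecutive strictly-NW steps is handled correctly), which the paper dismisses as clearly satisfied.
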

\begin{proof}
Consider the F1-triple $(T_1,S_1,R)$. Clearly, it satisfies the axioms 
Fa and Fb of F-triples. To check axiom C1c, pick three elements $x,y,z\in 
X$, with $xS_1y$ and $yRz$, and let $c_x=(i_x,j_x)$, $c_y=(i_y,j_y)$ and
$c_z=(i_z,j_z)$ be the corresponding cells of $M$. Then $xS_1y$ implies $j_x\le
j_y$, and $yRz$ implies $j_y<i_z$. Together this proves $j_x< i_z$, and
consequently $xRz$, and axiom C1c holds. Axiom C1c* can be proved by an
analogous argument.

To prove axiom C2c, consider again three elements $x,y,z\in X$ represented by
the cells $c_x=(i_x,j_x)$, $c_y=(i_y,j_y)$ and $c_z=(i_z,j_z)$. To prove
the first part of the axiom, assume for contradiction that $xT_1y$, $xT_1z$ and
$yRz$ holds. Since $c_x$ is strictly NW from $c_y$, we have $j_x<j_y$. From
$yRz$ we get $j_y<i_z$, hence $j_x<i_z$ implying $xRz$, which is a
contradiction. The second part of axiom C2c is proved analogously.

An analogous reasoning applies to $(T_2,S_2,R)$ as well.
\end{proof}

Note that the trivial involution on F-triples maps F1-triples to F1-triples and 
F2-triples to F2-triples.

From Lemma~\ref{lem-swse}, we deduce that an interval order $R$ is
\N-free if and only if its F1-triple has the form $(\emptyset,S,R)$, which
means that $(S,R)$ is a C1-pair, while $R$ is \tpop-free if and only
if its F2-triple has the form $(T,\emptyset,R)$, implying that $(T,R)$ is a
C2-pair. Thus, F1-triples are a generalization of C1-pairs, while F2-triples
generalize C2-pairs. 

Our main goal is to use F1-triples and F2-triples to identify combinatorial
statistics on Fishburn objects that satisfy nontrivial equidistribution
properties. Inspired by the Catalan statistics explored in
Subsection~\ref{ssec-catstats}, we focus on the statistics that can be expressed
as the number of minimal or maximal elements of a component of an F1-triple or
an F2-triple. 

For an interval order $R$, let $(T_1,S_1,R)$ be its F1-triple and 
$(T_2,S_2,R)$ its F2-triple. We may then consider the number of minimal and 
maximal elements in each of the five relations $T_1$, $S_1$, $T_2$, $S_2$, 
and $R$, for a total of ten possible statistics. 

In fact, we have no nontrivial result for the two statistics $\mmin{S_1}$ and
$\mmin{S_2}$. Furthermore, the trivial involution on F-triples maps the minimal
elements of $T_1$ to maximal elements of $T_1$ and vice versa, and the same is 
true for $T_2$ and $R$ as well. Therefore we will not treat the 
statistics $\mmin{T_1}$, $\mmin{T_2}$ and $\mmin{R}$ separately, 
and we focus on the five statistics $\mmax{S_1}$, $\mmax{S_2}$,
$\mmax{T_1}$, $\mmax{T_2}$, and $\mmax{R}$.

To gain an intuition for these five statistics, let us describe
them in terms of Fishburn matrices. Let $(T_1,S_1,R)$ and
$(T_2,S_2,R)$ be as above, and let $M$ be the Fishburn matrix representing the
interval order~$R$. 

Recall from Observation~\ref{obs-fish} that $\mmax{R}$ equals  the weight of the
last column of~$M$. To get a similar description for the remaining four
statistics of interest, we need some terminology. Let us say that a cell $c$ of
the matrix $M$ is \emph{strong-NE extreme cell} (or \emph{sNE-cell} for short),
if $c$ is a nonzero cell of $M$, and any other cell strongly NE from $c$ is a
zero cell. Similarly, $c$ is a \emph{weak-NE extreme cell} (or \emph{wNE-cell})
if it is a nonzero cell and any other cell weakly NE from $c$ is a zero cell.
Note that every weak-NE extreme cell is also a strong-NE extreme cell; in
particular, being strong-NE extreme is actually a weaker property than being
weak-NE extreme. In an obvious analogy, we will also refer to sSE-cells,
wSE-cells, etc.

\begin{observation}\label{obs-extreme}
Let $R$ be an interval order represented by a Fishburn matrix~$M$, let 
$(T_1,S_1,R)$ be its F1-triple and $(T_2,S_2,R)$ be its F2-triple. Then 
\begin{itemize}
 \item $\mmax{S_1}$ is equal to the number of wNE-cells of $M$,
 \item $\mmax{S_2}$ is equal to the total weight of the sNE-cells of $M$,
 \item $\mmax{T_1}$ is equal to the total weight of the sSE-cells of $M$, and
 \item $\mmax{T_2}$ is equal to the number of wSE-cells of $M$.
\end{itemize}
\end{observation}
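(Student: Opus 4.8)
The plan is to handle all four statistics by a single template, since each of $S_1,S_2,T_1,T_2$ is defined in Definition~\ref{def-fsft} by the same two-layer recipe: for elements $x,y$ lying in \emph{distinct} nonzero cells $c_x,c_y$, membership in the relation is dictated purely by the relative position of $c_x$ and $c_y$, while for elements lying in a \emph{common} cell the relation is prescribed to be either a chain or an antichain. First I would record a uniform criterion for maximality. Fix one of the four relations and call it $U$; recall that $x$ is $U$-maximal exactly when there is no $y$ with $xUy$. Splitting over whether such a hypothetical $y$ lies in $c_x$ or in another cell, $x$ is $U$-maximal if and only if (i) $x$ is maximal in the restriction of $U$ to $X_{c_x}$, and (ii) no nonzero cell other than $c_x$ lies in the direction that, according to Definition~\ref{def-fsft}, would place its occupants above $x$ in $U$.

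Next I would read off the relevant direction and the within-cell behaviour for each relation and translate (i)--(ii) into the language of extreme cells. For $S_1$ we have $xS_1y$ precisely when $c_x$ is weakly SW from $c_y$, i.e.\ when the occupied cell $c_y$ is weakly NE from $c_x$; hence condition (ii) says exactly that $c_x$ is a wNE-cell. Since $S_1$ restricts to a chain on each $X_c$, condition (i) is met by a single element of $c_x$, namely the top of that chain. Consequently each wNE-cell contributes exactly one $S_1$-maximal element and no other cell contributes any, so $\mmax{S_1}$ equals the number of wNE-cells. The relation $T_2$ is handled identically: $xT_2y$ means $c_y$ is weakly SE from $c_x$, so (ii) says $c_x$ is a wSE-cell, while (i) again singles out one element per such cell because $T_2$ is a chain within each cell; thus $\mmax{T_2}$ equals the number of wSE-cells. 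The two remaining relations differ only in that their within-cell order is an \emph{antichain}, so condition (i) becomes automatic and \emph{every} element of a qualifying cell is maximal. For $S_2$ the relation $xS_2y$ means $c_y$ is strictly NE from $c_x$, so (ii) identifies $c_x$ as an sNE-cell and the whole set $X_{c_x}$ is $S_2$-maximal; summing over such cells yields $\mmax{S_2}$ equal to the total weight of the sNE-cells. Symmetrically, $xT_1y$ means $c_y$ is strictly SE from $c_x$, identifying the sSE-cells, so $\mmax{T_1}$ equals the total weight of the sSE-cells.

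There is no genuinely hard step here; the argument is essentially bookkeeping, and the only point needing a moment's care is checking that the elements flagged in each case are maximal \emph{globally} rather than merely within their own cell. This is exactly what the ``extreme'' condition secures: if, say, $c_x$ is a wNE-cell, then by definition every other cell weakly NE from $c_x$ is a zero cell, so no element outside $X_{c_x}$ can sit above an element of $X_{c_x}$ in $S_1$; condition (ii) thus has no cross-cell obstruction and the within-cell analysis of (i) is conclusive. The remaining three cases are verbatim analogues with the direction (NE versus SE) and the weak/strong distinction interchanged, the latter correlating precisely with the chain/antichain alternative that decides whether a qualifying cell contributes a single maximal element or its full weight.
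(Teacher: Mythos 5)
Your argument is correct and is exactly the routine verification that the paper leaves implicit (the statement is given as an Observation with no written proof): unpack Definition~\ref{def-fsft}, note that $x$ is maximal in one of the four relations iff no other nonzero cell lies in the corresponding weak/strict NE or SE direction from $c_x$ and $x$ is not dominated within $X_{c_x}$, and observe that the chain/antichain alternative inside a cell decides whether a qualifying cell contributes one element or its full weight. Nothing is missing.
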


We are finally ready to state our main results. Let $\F_n$ denote the set of
interval orders on $n$ elements.

\begin{theorem}\label{thm-fish1} Fix $n\ge 0$. There is an involution
$\phi\colon \F_n\to \F_n$ with these properties. Suppose $R\in\F_n$ is an 
interval order with F1-triple $(T_1,S_1,R)$ and F2-triple $(T_2,S_2,R)$. Let 
$R'=\phi(R)$ be its image under $\phi$, with F1-triple $(T'_1,S'_1,R')$ and 
F2-triple $(T'_2,S'_2,R')$. Then the following holds:
\begin{itemize}
 \item $\mmax{S_1}=\mmax{T'_2}$, and hence
$\mmax{T_2}=\mmax{S'_1}$, since $\phi$ is an involution,
 \item $\mmax{S_2}=\mmax{T'_1}$, and hence
$\mmax{T_1}=\mmax{S'_2}$, and
 \item $\mmax{R}=\mmax{R'}$.
\end{itemize}
In other words, the pair of statistics $(\mmax{S_1},\mmax{T_2})$ and the pair of
statistics $(\mmax{S_2},\mmax{T_1})$ both have symmetric joint distribution on
$\F_n$, and the symmetry of both these pairs is witnessed by the same involution
$\phi$ which additionally preserves the value of $\mmax{R}$.
\end{theorem}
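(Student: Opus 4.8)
The plan is to push everything down to Fishburn matrices and then construct $\phi$ there. By Observations~\ref{obs-fish} and~\ref{obs-extreme}, if $M$ is the Fishburn matrix of $R$ and $M'$ that of $R'=\phi(R)$, the three desired identities become: (i) the number of wNE-cells of $M$ equals the number of wSE-cells of $M'$; (ii) the total weight of the sNE-cells of $M$ equals the total weight of the sSE-cells of $M'$; and (iii) the weight of the last column of $M$ equals that of $M'$. Since $\phi$ is an involution, the reversed identities follow automatically, so it suffices to produce an involution on Fishburn matrices of weight $n$ realizing (i)--(iii). The conceptual guide is that $c_x$ is weakly NE of $c_y$ exactly when the interval of $x$ contains that of $y$, so being wNE-extreme means \emph{containment-maximal}, whereas being wSE-extreme means \emph{maximal in the product order on the endpoint pairs $(l,r)$}. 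The map that would literally interchange these two orders is the vertical flip $i\mapsto m+1-i$ of the rows; unfortunately it destroys upper-triangularity, so no geometric symmetry of the triangular shape can be used. Indeed the only such symmetry, the anti-diagonal transpose $\sigma\colon(i,j)\mapsto(m+1-j,m+1-i)$ underlying the trivial involution, fixes the NE direction and merely swaps NW with SE. Hence $\phi$ must be a genuinely combinatorial construction.

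To organize it I would exploit the unique decomposition of an interval order into a primitive interval order together with the multiplicities of its indistinguishable classes (recalled in Subsection~\ref{ssec-matrices}). The two \emph{count} statistics $\mmax{S_1}$ and $\mmax{T_2}$ depend only on the support of $M$, i.e.\ on the primitive interval order, while the two \emph{weight} statistics $\mmax{S_2}$, $\mmax{T_1}$ and the last-column weight $\mmax{R}$ are obtained by summing multiplicities over sets of cells (the sNE-cells, the sSE-cells, and the last column) that are themselves determined by the support alone. This suggests building $\phi$ in two layers: first an involution $\phi_0$ on primitive orders, i.e.\ on Fishburn matrices whose entries are all $0$ or $1$, together with a bijection $\beta$ from the nonzero cells of a support to those of its image, such that $\beta$ carries sNE-cells onto sSE-cells and the last column onto the last column; then $\phi(M)$ is obtained from $\phi_0$ of the support of $M$ by transplanting, through $\beta$, the multiplicity of each cell $c$ to the cell $\beta(c)$. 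One checks that this $\phi$ lands in Fishburn matrices (positivity of row and column weights and upper-triangularity being inherited from $\phi_0$), that it is an involution whenever $\beta$ is chosen self-inverse, and that the properties of $\beta$ yield (ii) and (iii) by summing multiplicities, while $\phi_0$ yields (i) at the level of supports.

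The main obstacle is the construction of the support-level involution $\phi_0$ and its cell-bijection $\beta$: for every primitive interval order I must produce a new one in which the count of containment-maximal intervals and the count of $(l,r)$-maximal intervals are interchanged, the sNE- and sSE-cells are matched bijectively, and the set of intervals with right endpoint $m$ is preserved. Because the morally correct vertical flip is unavailable, $\phi_0$ has to be given by an explicit procedure; I would attempt a recursive peeling of the matrix from the last column inward, or a reorganization that processes the cells diagonal by diagonal, maintaining the NE- and SE-staircase profiles and exchanging their roles. Verifying that such a procedure is well-defined, is an involution, and meets \emph{all} of the matching requirements simultaneously, rather than any one of them in isolation, is the technical heart of the argument. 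As consistency checks, $\phi$ restricted to \N-free orders (where $T_1=\emptyset$ and $(S_1,R)$ is a C1-pair) should land among \tpop-free orders (where $(T_2,R)$ is a C2-pair) and recover the Catalan correspondence of Proposition~\ref{pro-cat1}, and the composition $\sigma\circ\phi$ should act on the five statistics exactly as predicted by the trivial involution.
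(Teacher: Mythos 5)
Your reduction is sound and matches the paper's strategy: pass to Fishburn matrices via Observations~\ref{obs-fish} and~\ref{obs-extreme}, split a matrix into its primitive support plus multiplicities, and look for an involution on supports together with a column-preserving, self-inverse cell bijection through which multiplicities are transplanted (the paper's bijection reverses the top-to-bottom order of the nonzero cells within each column, which is exactly what is needed because the sNE-cells of a column are its topmost nonzero cells and the sSE-cells its bottommost ones). Your observation that the count statistics depend only on the support while the weight statistics are sums of multiplicities over support-determined cell sets is also correct and is implicitly used in the paper.

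However, the proposal stops exactly where the proof begins: you never construct the support-level involution $\phi_0$, and you say yourself that this is ``the technical heart.'' A plan that defers its central construction to an unspecified ``recursive peeling'' or ``diagonal by diagonal'' procedure is not a proof, and it is not at all obvious that such a procedure exists with all the required properties simultaneously. The paper fills this hole with the \emph{extension} mechanism: every primitive Fishburn matrix is obtained from the $1\times 1$ matrix by a unique sequence of valid extensions, each extension being recorded by a code word over $\{D,S,I\}$ describing whether each $1$-cell of the last column is duplicated, shifted, or ignored; the involution $\phi_0$ rebuilds the matrix using the \emph{reversed} code at every step. Observation~\ref{obs-extend} is the key lemma making this work --- it shows that under code reversal the sNE/sSE and wNE/wSE roles of the cells in each column are exchanged while all column weights are preserved, and that these properties persist through subsequent extensions. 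Your ``peeling from the last column inward'' is in the right spirit (it is the inverse of the extension process), but without the code-reversal rule and the cell-by-cell bookkeeping of Observation~\ref{obs-extend}, the claims that $\phi_0$ is well defined, is an involution, and matches sNE-cells with sSE-cells column by column remain unverified. As it stands the argument has a genuine gap.
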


\begin{theorem}\label{thm-fish2} Let $(T_1,S_1,R)$ be the F1-triple of an
interval order~$R$. For any $n\ge 0$, the pair of statistics
$(\mmax{S_1},\mmax{R})$ has a symmetric joint distribution over~$\F_n$.
\end{theorem}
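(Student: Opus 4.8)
The first step is to restate everything in the language of Fishburn matrices. By Observation~\ref{obs-fish}, $\mmax{R}$ equals the weight of the last column of the Fishburn matrix $M$ representing $R$, and by Observation~\ref{obs-extreme}, $\mmax{S_1}$ equals the number of wNE-cells of $M$. Moreover, reading the cell $(i,j)$ as the interval $[i,j]$, a nonzero cell is a wNE-cell precisely when $[i,j]$ is maximal under containment among the distinct intervals occurring in the minimal representation. Since interval orders of order $n$ correspond bijectively to Fishburn matrices of weight $n$, the theorem is equivalent to a purely matrix-theoretic claim: there is a weight-preserving involution $\iota$ on Fishburn matrices that exchanges the number of wNE-cells with the weight of the last column, that is, $\#\mathrm{wNE}(\iota M)=\mathrm{lastcol}(M)$ and $\mathrm{lastcol}(\iota M)=\#\mathrm{wNE}(M)$.

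The plan is to build $\iota$ as a \emph{conjugation} that trades the NE staircase of wNE-cells for the last column. Two features of $M$ carry the relevant data: the last column, whose entries record the multiplicities of the maximal elements, and the NE frontier, namely the antichain of wNE-cells, which — as noted after the definition of wNE-cells — is a staircase whose cells have strictly increasing column index as the row index increases. I would encode these two pieces of data as a pair of integer sequences (the successive entries of the last column, and the weights along the wNE staircase) and define $\iota$ so that it interchanges the roles of these sequences, in the spirit of transposing a Young-diagram-shaped region, while leaving the interior of $M$ essentially untouched. The design constraint is a correct specialization on the Catalan subfamilies: when $R$ is \N-free the matrix $M$ has no two nonzero cells in strictly NW position (Lemma~\ref{lem-swse}), and $\iota$ should reduce to the classical Dyck-path involution witnessing the symmetric joint distribution of $\ret=\mmax{S}$ and $\des=\mmax{R}$ recorded in Fact~\ref{fac-catstat}, thereby reproving Proposition~\ref{pro-cat2}. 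Checking the instances of weight at most $3$ by hand reveals the intended pairing — for example the antichain $[\,n\,]$ and the diagonal $I_n$ must swap, as do the V-shaped order and the ``chain-plus-isolated-point'' order at weight $3$ — which both constrains and guides the construction.

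The hard part is to make this transformation global and, simultaneously, (i) preserve total weight, (ii) preserve the Fishburn property that every row and every column has nonzero weight, and (iii) be a genuine involution effecting exactly the stated swap. The difficulty is that the two statistics live at different corners of $M$ — the last column on the right edge, the wNE staircase on the upper-right frontier — so a naive local move easily violates the nonzero-row/column conditions or fails to square to the identity. I would therefore most likely define $\iota$ recursively on the magnitude $m$ of $M$, peeling off the outermost layer of the NE staircase together with the last column and reinserting it in swapped form, then prove involutivity and the Fishburn property by induction. I also considered deriving the result from Theorem~\ref{thm-fish1}: its involution $\phi$ preserves $\mmax{R}$ and exchanges $\mmax{S_1}$ with $\mmax{T_2}$, and the trivial involution preserves $\mmax{S_1}$ while sending $\mmax{R}$ to $\mmin{R}$; however, composing these only relates $(\mmax{S_1},\mmax{R})$ to $(\mmax{T_2},\mmax{R})$ or to $(\mmax{S_1},\mmin{R})$, neither of which closes the loop to the required transpose $(\mmax{R},\mmax{S_1})$. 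This is why a direct involution, rather than a reduction to Theorem~\ref{thm-fish1}, appears to be the natural route.
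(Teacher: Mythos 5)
Your reduction of the theorem to a statement about Fishburn matrices is correct and matches the paper's own setup: by Observations~\ref{obs-fish} and~\ref{obs-extreme}, the claim is that the number of wNE-cells and the weight of the last column have symmetric joint distribution over Fishburn matrices of weight~$n$. Your side remark is also right that composing the involution of Theorem~\ref{thm-fish1} with the trivial involution cannot yield the result. But the heart of your argument --- the involution $\iota$ --- is never actually constructed. What you give is a design brief (``encode the two boundary sequences and interchange them'', ``define $\iota$ recursively by peeling off the outermost layer, then prove involutivity and the Fishburn property by induction'') together with an honest list of the obstacles, none of which is overcome. This is a genuine gap, and not an incidental one: the paper explicitly states that no direct bijective proof is known and poses the construction of such a bijection as an open problem (the ``missing bijection'' paragraph and the accompanying Problem at the end of the paper). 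There is moreover a concrete warning sign against the specific shape of your plan: as the paper observes from Figure~\ref{fig-biject}, the symmetry of Theorem~\ref{thm-fish2} fails when restricted to matrices with a fixed number of rows, so any witnessing involution must change the dimension of the matrix; a conjugation that ``leaves the interior of $M$ essentially untouched'' while swapping two boundary sequences is therefore unlikely to work as described, and your requirement that it specialize to the Dyck-path involution on the \N-free subfamily does not rescue it.

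The paper takes an entirely different route. It defines the refined generating function
\[
F(x,y,z)=\sum_{M\in \M} x^{\w(M)} y^{\lc(M)} z^{\nne(M)},
\]
where $\w$, $\lc$ and $\nne$ denote the weight, the weight of the last column, and the number of wNE-cells, and proves (Proposition~\ref{pro-fish2}) the closed form
$F(x,y,z)=xyz\sum_{n \ge 0} \poch{(1-xy)(1-xz)}{1-x}{n}$,
which is manifestly symmetric in $y$ and $z$. The derivation uses the same extension-and-inflation machinery as the proof of Theorem~\ref{thm-fish1}: a recursion $P_{k+1}(x,y,z)=(xz+xy+y)P_k(x,x+y+xy,z)-yP_k(x,y,z)$ for primitive matrices, leading to a functional equation that is solved by iteration, followed by the inflation substitution. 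If you want to complete your write-up as stated, you would need to either supply the involution in full detail (thereby solving the paper's open problem) or switch to an enumerative argument of this kind.
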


In matrix terminology, Theorem~\ref{thm-fish2} states that the statistics
`number of wNE-cells' and `weight of the last column' have symmetric joint
distribution over Fishburn matrices of weight~$n$; see Figure~\ref{fig-biject}.

\begin{figure}
 \hfil\includegraphics{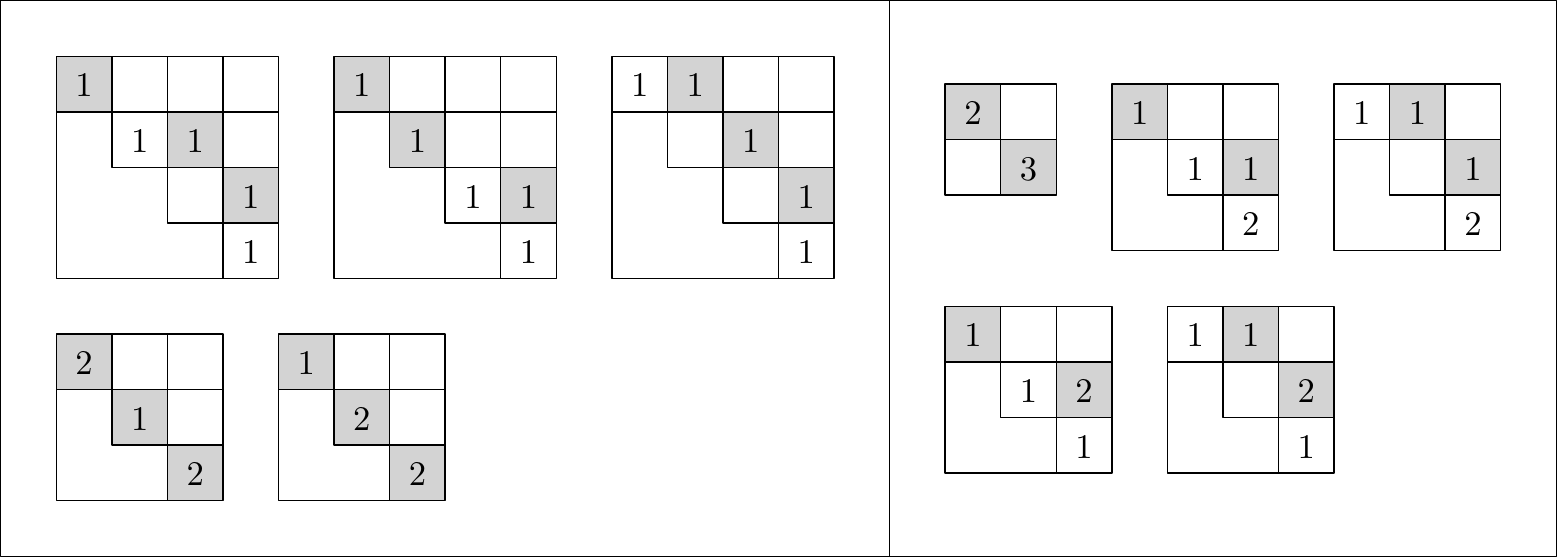}
\caption{Illustration of the symmetric joint distribution of the number of
wNE-cells and the weight of last column over Fishburn matrices. Left: the
Fishburn matrices of weight 5, with 3 wNE-cells, and last column of weight 2.
Right: the Fishburn matrices of weight 5, with 2 wNE-cells, and last column of
weight 3. The wNE-cells are shaded.}\label{fig-biject}
\end{figure}

Note that by combining Theorems~\ref{thm-fish1} and~\ref{thm-fish2}, we may
additionally deduce that $\mmax{R}$ and $\mmax{T_2}$ also have symmetric joint
distribution over~$\F_n$.

Before we present the proofs of the two theorems, let us point out how they 
relate to the results on Catalan statistics discussed previously.
Theorem~\ref{thm-fish1} is a generalization of Proposition~\ref{pro-cat1}. To
see this, consider the situation when $R$ is \tptn-free. With the notation of
Theorem~\ref{thm-fish1}, this implies that $T_1=\emptyset$ and $(S_1,R)$ is a
C1-pair. Consider then the F2-triple $(T'_2,S'_2,R')$. Theorem~\ref{thm-fish1}
states that $\mmax{T_1}=\mmax{S'_2}$, but since $T_1=\emptyset$, it follows
that $S'_2=\emptyset$ as well, since $\emptyset$ is the only relation with $n$
maximal elements. Consequently, $R'$ has an F2-triple of the form
$(T'_2,\emptyset,R')$, hence $(T'_2,R')$ is a C2-pair and $R'$ is \tpto-free. 
We conclude that by restricting the mapping $\phi$ from Theorem~\ref{thm-fish1}
to \tptn-free posets $R$, we get a bijection from C1-pairs to C2-pairs with
the same statistic-preserving properties as in Proposition~\ref{pro-cat1}.

Theorem~\ref{thm-fish2} is inspired by Proposition~\ref{pro-cat2}, and can be
seen as extending the statement of this proposition from C1-pairs to F1-triples.

\subsection{Proofs}

To prove Theorems~\ref{thm-fish1} and \ref{thm-fish2}, it is more convenient to 
work with Fishburn matrices rather than relational structures, and to interpret 
the relevant statistics using Observation~\ref{obs-extreme}.

Recall that an interval order is primitive if it has no two indistinguishable 
elements. Primitive interval orders correspond to Fishburn matrices whose 
entries are equal to 0 or 1; we call such matrices \emph{primitive Fishburn 
matrices}. 

An \emph{inflation} of a primitive Fishburn matrix $M$ is an operation which 
replaces the value of each 1-cell of $M$ (i.e., a cell of value 1) by a
positive integer, while the 0-cells are left unchanged. Clearly, by inflating a
primitive Fishburn matrix we again obtain a Fishburn matrix, and any Fishburn
matrix can be uniquely obtained by inflating a primitive Fishburn matrix.

Another useful operation on primitive Fishburn matrices is the 
\emph{extension}. Informally speaking, it creates a primitive Fishburn 
matrix $P'$ with $k+1$ columns from a primitive Fishburn matrix $P$ with 
$k$-columns, by splitting the last column of $P$ into two new columns.
Formally, suppose that $P=(P_{i,j})_{i,j=1}^k$ is a $k$-by-$k$ primitive 
Fishburn matrix. We say that a $(k+1)$-by-$(k+1)$ matrix 
$P'=(P'_{i,j})_{i,j=1}^{k+1}$ is an extension of $P$, if $P'$ has the following 
properties (see Figure~\ref{fig-extend}):
\begin{itemize}
 \item The last row of $P'$ consists of $k$ 0-cells followed by a 1-cell. In 
other words, for $j\le k$ we have $P'_{k+1,j}=0$, while $P_{k+1,k+1}=1$.
 \item For every $j<k$ and for every $i\le j$, we have $P_{i,j}=P'_{i,j}$. That 
is, the first $k-1$ columns of $P'$ are identical to the first $k-1$ columns 
of $P$, except for an extra 0-cell in the last row.
 \item If $P_{i,k}=0$ for some $i$, then $P'_{i,k}=P'_{i,k+1}=0$. That 
is, each 0-cell in the last column of $P$ gives rise to two 0-cells in the same
row and in the last two columns of~$P'$.
 \item If $P_{i,k}=1$, then there are three options for the values of 
$P'_{i,k}$ and $P'_{i,k+1}$:
\begin{enumerate}
 \item $P'_{i,k}=P'_{i,k+1}=1$. In such case we say that the 1-cell $P_{i,k}$ 
is \emph{duplicated} into $P'_{i,k}$ and $P'_{i,k+1}$.
 \item $P'_{i,k}=0$ and $P'_{i,k+1}=1$. We then say that $P_{i,k}$ is 
\emph{shifted} into $P'_{i,k+1}$.
 \item $P'_{i,k}=1$ and $P'_{i,k+1}=0$. We then say that $P_{i,k}$ is 
\emph{ignored} by the extension.
\end{enumerate}
\end{itemize}

\begin{figure}
\hfil\includegraphics[width=0.5\textwidth]{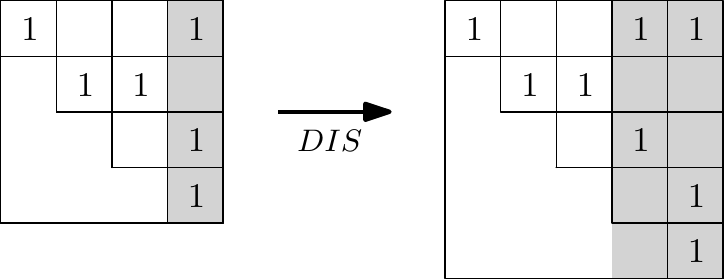}
\caption{Example of an extension of a primitive Fishburn
matrix. The word DIS is the code of the extension. }\label{fig-extend}
\end{figure}

We say that an extension of $P$ into $P'$ is \emph{valid}, if there is at least 
one 1-cell in the penultimate column of $P'$, or equivalently, if at least one 
1-cell in the last column of $P$ has been duplicated or ignored. It is easy to 
see that if $P'$ is a valid extension of a primitive Fishburn matrix $P$, then 
$P'$ is itself a primitive Fishburn matrix, and conversely, any primitive 
Fishburn matrix $P'$ with at least two columns is a valid extension of a unique 
primitive Fishburn matrix~$P$.

Note that a primitive Fishburn matrix $P$ whose last column has weight $m$ has 
exactly $3^m$ extensions; one of them is invalid and $3^m-1$ are valid. For 
convenience, we will represent each extension $P'$ of $P$ by a word 
$w=w_1\dotsb w_m$ of length $m$ over the alphabet $\{D,S,I\}$, defined as 
follows: suppose that $c_1,c_2,\dotsc,c_m$ are the 1-cells in the last column of 
$P$, listed in top-to-bottom order. Then $w_i$ is equal to $D$ (or $S$\, or 
$I$), if the cell $c_i$ is duplicated (or shifted, or ignored, respectively) in 
the extension~$P'$. We will call $w$ the \emph{code} of the extension from 
$P$ to~$P'$. Notice that the 1-cell in the bottom-right corner of $P'$ is not 
represented by any symbol of~$w$.

Given a word $w=w_1w_2\dotsb w_m$ of length $m$, the \emph{reverse} of $w$, 
denoted by $\overline{w}$, is the word $w_m w_{m-1}\dotsb w_1$. 

\begin{observation}\label{obs-extend}
Let $P$ be a $k$-by-$k$ primitive Fishburn matrix with $m$ 1-cells in the last
column, and let $P'$ be its valid extension, with code $w=w_1\dotsb w_m$. Let
$c$ be a 1-cell in the $j$-th column of~$P'$.
\begin{itemize}
 \item Suppose that $j<k$, which implies, in particular, that $c$ is also a
1-cell in~$P$. Then $c$ is an sNE-cell of $P'$ (or wNE-cell of $P'$, or
sSE-cell of $P'$, or wNE-cell of $P'$) if and only if it is a sNE-cell of $P$
(or wNE-cell of $P$, or sSE-cell of $P$, or wNE-cell of $P$, respectively).
 \item Suppose that $j=k$, which means that $c$ is also a 1-cell in $P$, and
this 1-cell was duplicated or ignored by the extension from $P$ to~$P'$. 
Suppose $c$ is the $i$-th 1-cell in the last column of $P$, counted from the
top (i.e., there are $i-1$ 1-cells above $c$ and $m-i$ 1-cells below $c$ in
the last column of $P$). Then $c$ is a wNE-cell of $P'$ if and only if $i=1$ and
$w_1=I$, while $c$ is a wSE-cell of $P'$ if and only if $i=m$ and $w_m=I$.
Furthermore, $c$ is an sNE-cell of $P'$ if and only if all the 1-cells of $P$
above it were ignored (i.e., $w_1=w_2=\dotsb=w_{i-1}=I$), while $c$ is an
sSE-cell of $P'$ if and only if all the 1-cells of $P$ below it were ignored
(i.e., $w_{i+1}=w_{i+2}=\dotsb=w_m=I$).
 \item If $j=k+1$, i.e., $c$ is in the last column of $P'$, then $c$ is an
sNE-cell and also an sSE-cell. Moreover, $c$ is a wNE-cell if and only if 
it is the topmost 1-cell of the last column of~$P'$, and it is a wSE-cell
if and only if it is the bottommost 1-cell of the last column of~$P'$.
\end{itemize}
\end{observation}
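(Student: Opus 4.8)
The plan is to turn each extreme-cell property into a statement about which cells of $P'$ occupy the relevant quadrant of the cell $c$ under consideration, and then to read off the answer from the code $w$. I would first record the nonzero pattern of $P'$ in terms of $P$ and $w=w_1\dotsb w_m$. Let $r_1<\dotsb<r_m$ be the rows of the $1$-cells $c_1,\dotsc,c_m$ in the last column of $P$, and let $c$ be the $p$-th of these (the index called $i$ in the statement). Columns $1,\dotsc,k-1$ of $P'$ coincide with those of $P$ apart from an extra $0$ in row $k+1$; the penultimate column $k$ of $P'$ carries a $1$ in row $r_q$ precisely when $w_q\in\{D,I\}$; the last column $k+1$ carries a $1$ in row $r_q$ precisely when $w_q\in\{D,S\}$, together with the new corner cell $(k+1,k+1)$; and the remainder of row $k+1$ is zero. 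I would note once and for all that this corner cell shares neither a row nor a column with any cell of columns $1,\dotsc,k$ and is \emph{greater} than every such cell, so it never lies in the NE- or SE-region of a cell of columns $1,\dotsc,k$ and may be ignored in the first two cases.

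For a cell $c=(i,j)$ with $j<k$, note that $i\le j<k$, so $c$ avoids row $k+1$, and its NE- and SE-regions meet only columns $j,j+1,\dotsc,k+1$. The columns $j,j+1,\dotsc,k-1$ are unchanged from $P$ to $P'$ in the rows $\le k$ that matter, so only columns $k$ and $k+1$ require attention. The key point is that, for a fixed row $r$, columns $k$ and $k+1$ of $P'$ contain a nonzero entry in row $r$ if and only if the last column of $P$ does, since each of $D,S,I$ places a $1$ in at least one of the two cells while a $0$ of $P$ places $0$ in both. Moreover the incomparability condition defining the strict SE-region depends only on the row index, so it is insensitive to whether the column is $k$ or $k+1$. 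Hence each of the strict and weak NE- and SE-regions of $c$ is nonempty in $P'$ exactly when the corresponding region is nonempty in $P$, and all four extreme-cell statuses are preserved.

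The computational core, and the step I expect to be the main obstacle, is the case $j=k$, where $c$ sits in row $r_p$ with $w_p\in\{D,I\}$ and every cell NE or SE of $c$ lies in column $k$ or $k+1$. I would enumerate them: the cells strictly NE of $c$ are those of column $k+1$ in rows $r_q$ with $q<p$ (present iff $w_q\in\{D,S\}$), and the cells strictly SE of $c$ are those of column $k+1$ in rows $r_q$ with $q>p$, the corner being comparable and hence excluded (present iff $w_q\in\{D,S\}$). Emptiness of these two regions gives the sNE condition $w_1=\dotsb=w_{p-1}=I$ and the sSE condition $w_{p+1}=\dotsb=w_m=I$. For the weak variants one adds, for wNE, the North-neighbours of column $k$ in rows $r_q$ with $q<p$ (present iff $w_q\in\{D,I\}$) and the East-neighbour $(r_p,k+1)$ (present iff $w_p=D$); symmetrically for wSE one adds the South-neighbours of column $k$ in rows $r_q$ with $q>p$ and the same East-neighbour. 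Requiring $c$ to be a wNE-cell forces, for every $q<p$, both that column $k+1$ be empty there ($w_q=I$) and that column $k$ be empty there ($w_q=S$); these clash, so no such $q$ exists and $p=1$, after which absence of the East-neighbour gives $w_1=I$. The wSE analysis is symmetric, yielding $p=m$ and $w_m=I$. The delicate part is exactly this bookkeeping, keeping the weak-versus-strict distinction and the doubly counted East-neighbour straight, which is where an error would most easily arise.

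Finally, when $j=k+1$ the strict NE- and SE-regions would require a column index exceeding $k+1$, which does not exist, so $c$ is automatically both an sNE- and an sSE-cell; for the weak variants only the North- and South-neighbours inside the last column survive, whence $c$ is a wNE-cell iff it is the topmost $1$ of the last column and a wSE-cell iff it is the bottommost, the latter being the corner. Apart from the case $j=k$, every case is thus a short region-counting argument.
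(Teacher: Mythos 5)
Your proof is correct: the paper states this as an Observation without supplying a proof, and your direct verification from the definitions (tracking which cells of $P'$ lie in the weak/strict NE- and SE-regions of $c$, noting that the corner cell $(k+1,k+1)$ is comparable to, hence never in the SE- or NE-region of, any cell in columns $1,\dotsc,k$, and reducing presence in columns $k$ and $k+1$ to the code letters $D$, $S$, $I$) is exactly the routine check the paper implicitly relies on. In particular your handling of the delicate $j=k$ case, including the clash between emptiness of columns $k$ and $k+1$ in rows above $c$ forcing $i=1$ for the wNE condition, is accurate.
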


\subsubsection{Proof of Theorem~\ref{thm-fish1}}
To prove Theorem~\ref{thm-fish1}, we first describe the involution $\phi$, and 
then verify that it has the required properties. 

Let $M$ be a $k$-by-$k$ Fishburn matrix. As explained above, $M$ can be 
constructed in a unique way from the 1-by-1 matrix \fbox{1} by a sequence of
$k-1$ 
valid extensions followed by an inflation. In particular, there is a sequence
of matrices $P_1, P_2,\dotsc,P_k,M$, where $P_1=\fbox{1}$, for $i>1$ the matrix
$P_i$ is an extension of $P_{i-1}$, and $M$ is an inflation of~$P_k$. 

\newcommand{\oP}{\overline{P}}

Define a new sequence $\oP_1,\dotsc,\oP_k$ as follows:
\begin{itemize}
 \item $\oP_1=P_1=\fbox{1}$.
 \item For each $i>1$, $\oP_i$ is an extension of $\oP_{i-1}$, and 
the code of $\oP_i$ is the reverse of the code of~$P_i$.
\end{itemize}

Observe that for any $1\le j\le i\le k$, the $j$-th column of $\oP_i$ has the 
same weight as the $j$-th column of~$P_i$. Furthermore, from
Observation~\ref{obs-extend}, we immediately deduce that for every $i$ and $j$
such that $1\le j\le i\le k$, the following relationships hold:
\begin{itemize}
 \item The number of sNE-cells in the $j$-th column of $P_i$ equals the number
of sSE-cells in the $j$-th column of~$\oP_i$. 
 \item The number of sSE-cells in the $j$-th column of $P_i$ equals the number
of sNE-cells in the $j$-th column of~$\oP_i$. 
 \item The number of wNE-cells in the $j$-th column of $P_i$ equals the number
of wSE-cells in the $j$-th column of~$\oP_i$. 
 \item The number of wSE-cells in the $j$-th column of $P_i$ equals the number
of wNE-cells in the $j$-th column of~$\oP_i$. 
\end{itemize}

\newcommand{\oM}{\overline{M}}
As the last step in the definition of $\phi$, we describe how to inflate
$\oP_k$ into a matrix $\oM$. Fix a column index $j\le k$. Let $m$ be the number
of 1-cells in the $j$-th column of $P_k$. Since $M$ is an inflation of $P_k$,
it has $m$ nonzero cells in its $j$-th column; let $x_1,x_2,\dotsc,x_m$ be the
weights of these non-zero cells, ordered from top to bottom. As we know,
$\oP_k$ also has $m$ 1-cells in its $j$-th column. We inflate these cells by
using values $x_m,x_{m-1},\dotsc,x_1$, ordered from top to bottom. Doing this
for each $j$, we obtain an inflation $\oM$ of $\oP_k$. We then define $\phi$ by
$\phi(M)=\oM$.

Let us check that $\phi$ has all the required properties. Clearly, $\phi$ is an
involution, and it preserves the weight of the last column (indeed, of any
column). Moreover, the number of wNE-cells of $M$ is equal to the number of
wSE-cells of $\oM$, since these numbers are not affected by inflations. It
remains to see that the total weight of the sNE-cells of $M$ equals the total
weight of the sSE-cells of~$\oM$. Fix a column $j\le k$, and suppose that $M$
has exactly $\ell$ sNE-cells in its $j$-th column. These must be the $\ell$
topmost nonzero cells of the $j$-th column of $M$, and in the notation of the
previous paragraph, their total weight is $x_1+x_2+\dotsb+x_\ell$. It follows
that $P_k$ also has $\ell$ sNE-cells in its $j$-th column, therefore $\oP_k$ has
$\ell$ sSE-cells in its $j$-th column, and these are the bottommost $\ell$
1-cells of the $j$-th column of $\oP_k$. In $\oM$, these $\ell$ cells have total
weight $x_1+\dotsb+x_\ell$. We see that the sNE-cells of $M$ have the same
weight as the sSE-cells of $\oM$, and Theorem~\ref{thm-fish1} is proved.

We remark that the involution $\phi$ actually witnesses more equidistribution
results than those stated in Theorem~\ref{thm-fish1}. E.g., the total weight of
wNE-cells of $M$ equals the total weight of wSE-cells of $\phi(M)$, and the
number of sNE-cells of $M$ equal the number of sSE-cells of~$\phi(M)$. These
facts do not seem to be easy to express in terms of F1-triples or F2-triples.

Moreover, from the construction of $\phi$ it is clear that the conclusions of
Theorem~\ref{thm-fish1} remain valid even when restricted to primitive Fishburn
matrices, or to Fishburn matrices of a given number of rows. No such restriction
is possible in Theorem~\ref{thm-fish2}, as seen from the examples in
Figure~\ref{fig-biject}.

\subsubsection{Proof of Theorem~\ref{thm-fish2}}
As with Theorem~\ref{thm-fish1}, our proof will be based on the concepts of
extension and inflation. However, in this case we are not able to give an
explicit bijection. Instead, we will proceed by deriving a formula for the
corresponding refined generating function.

\newcommand{\w}{w}
\newcommand{\lc}{lc}
\newcommand{\pc}{pc}
\newcommand{\nne}{ne}
\newcommand{\poch}[3]{\left(#1;#2\right)_{#3}}
\newcommand{\pochq}[2]{\left(#1\right)_{#2}}
\newcommand{\pochqn}[1]{\pochq{#1}{n}}

For a matrix $M$, let $\w(M)$ denote its weight, $\lc(M)$ the weight of its last
column, $\pc(M)$ the total weight of the columns preceding the last one (so
$\w(M)=\lc(M)+\pc(M)$), and $\nne(M)$ its number of wNE-cells. Our
goal is to show that the statistics $\lc$ and $\nne$ have symmetric joint
distribution over the set of Fishburn matrices of a given weight.

We will use the standard notation $\poch{a}{q}{n}$ for the
product $(1-a)(1-aq)(1-aq^2)\dotsb(1-aq^{n-1})$.

Let $\M$ be the set of nonempty Fishburn matrices. Our main object of interest
will be the generating function
\[
F(x,y,z)=\sum_{M\in \M} x^{\w(M)} y^{\lc(M)} z^{\nne(M)}.
\]
Theorem~\ref{thm-fish2} is equivalent to the identity $F(x,y,z)=F(x,z,y)$. 
Therefore, the theorem follows immediately from the following proposition.

\begin{proposition}\label{pro-fish2}
 The generating function $F(x,y,z)$ satisfies
\[
 F(x,y,z)=xyz\sum_{n \ge 0} \poch{\vphantom{\big(}(1-xy)(1-xz)}{1-x}{n}.
\]
\end{proposition}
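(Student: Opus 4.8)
The plan is to pass from Fishburn matrices to primitive Fishburn matrices via inflation, set up a functional equation for a catalytic refinement of $F$ built from the extension operation, and then solve that equation by iterating an affine substitution. Since inflation does not change the number of wNE-cells, we have $\nne(M)=\nne(P)$ when $M$ is an inflation of the primitive matrix $P$; summing $x^v$ over the value $v\ge 1$ of an inflated cell gives $\tfrac{x}{1-x}$ for a cell outside the last column, while a last-column cell also carries $y$ and contributes $\sum_{v\ge1}(xy)^v=\tfrac{xy}{1-xy}$. Writing $u=\tfrac{x}{1-x}$ and introducing a catalytic variable $t$ to mark the cells of the last column, I define
\[
G(t)=\sum_{P}z^{\nne(P)}\,u^{p(P)}\,t^{m(P)},
\]
where the sum is over all primitive Fishburn matrices $P$, $m(P)$ is the number of $1$-cells in the last column of $P$, and $p(P)$ is the number of its remaining $1$-cells. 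Then $F(x,y,z)=G\!\left(\tfrac{xy}{1-xy}\right)$, so it suffices to compute $G$.

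Next I would translate the extension operation into a transfer on $G$ using Observation~\ref{obs-extend}: an extension of $P$ with code $w=w_1\cdots w_m$ increases $\nne$ by $1$ exactly when the topmost last-column cell is ignored (i.e.\ $w_1=I$); the number of frozen cells grows by the number of duplicated and ignored letters; and the new last column receives one cell per duplicated or shifted letter, plus the new bottom-right cell. Summing over all valid codes (every word of $\{D,S,I\}^m$ except the single invalid word $S^m$) sends the monomial $t^m$ to $t\,\alpha\,\beta^{m-1}-t^{m+1}$, where $\beta=(u+1)t+u$ collects the three choices at a non-initial position, $\alpha=(u+1)t+uz$ does the same at the initial position (the extra $z$ recording the $\nne$-increment), and $t^{m+1}$ subtracts the invalid word. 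Since the $1\times1$ matrix contributes the base term $zt$ and every larger primitive matrix is a unique valid extension, this yields the functional equation
\[
(1+t)\,G(t)=zt+\frac{t\,\alpha}{\beta}\,G(\beta),
\]
with $\alpha,\beta$ the affine functions of $t$ just described.

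Finally I would solve this by iteration. Writing $\theta(t)=\beta(t)=(u+1)t+u$, which is affine with fixed point $-1$ so that $\theta^{(n)}(t)+1=(u+1)^n(t+1)$, repeated substitution gives
\[
G(t)=\sum_{n\ge0}\frac{z\,\theta^{(n)}(t)}{\theta^{(n)}(t)+1}\prod_{k=0}^{n-1}\frac{\theta^{(k)}(t)\,\alpha(\theta^{(k)}(t))}{\bigl(\theta^{(k)}(t)+1\bigr)\,\beta(\theta^{(k)}(t))}.
\]
Because $\beta(\theta^{(k)}(t))=\theta^{(k+1)}(t)$, the factors $\theta^{(k)}/\beta(\theta^{(k)})$ telescope to $\theta^{(0)}/\theta^{(n)}=t/\theta^{(n)}(t)$, so the $n$-th summand collapses to $\tfrac{z\,t}{\theta^{(n)}(t)+1}\prod_{k=0}^{n-1}\tfrac{\alpha(\theta^{(k)}(t))}{\theta^{(k)}(t)+1}$. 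Substituting $u=\tfrac{x}{1-x}$ and $t=\tfrac{xy}{1-xy}$ makes everything explicit: $u+1=\tfrac1{1-x}$ and $t+1=\tfrac1{1-xy}$, hence $\theta^{(k)}(t)+1=\tfrac{(1-x)^{-k}}{1-xy}$, and a short computation yields $\tfrac{\alpha(\theta^{(k)}(t))}{\theta^{(k)}(t)+1}=\tfrac{1-(1-xy)(1-xz)(1-x)^k}{1-x}$ together with $\tfrac{z\,t}{\theta^{(n)}(t)+1}=xyz\,(1-x)^n$. The powers $(1-x)^{\pm n}$ cancel and the $n$-th summand becomes exactly $xyz\,\poch{(1-xy)(1-xz)}{1-x}{n}$, giving the claimed identity.

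I expect the main obstacle to be the bookkeeping in the second step: correctly determining how $\nne$, the frozen-cell count, and the last-column size transform under each of the three letter types, isolating the special role of the first position for $\nne$, and excluding precisely the one invalid code, so that the transfer is genuinely $t^m\mapsto t\alpha\beta^{m-1}-t^{m+1}$. A secondary point is to justify the iteration as an identity of formal power series in $x$: one checks that after substitution each $\theta^{(k)}(t)$ and each factor of the product has positive order in $x$, so the $n$-th summand has $x$-order at least $n+1$ and the infinite sum is well defined. Once these are in place, the telescoping and the final simplification are routine.
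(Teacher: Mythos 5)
Your proof is correct and follows essentially the same route as the paper's: reduce to primitive Fishburn matrices, translate the extension operation (via its $\{D,S,I\}$ code, with the special role of the topmost last-column cell for the wNE-count) into a functional equation for a generating function with a catalytic variable marking the last column, and solve it by iterating the affine substitution. The only cosmetic difference is that you perform the inflation substitution up front and carry a single catalytic variable $t$, whereas the paper works with the primitive series $P_k(x,y,z)$ and inflates at the end; the two functional equations coincide under $G(t)=t\,P(u,t,z)$, and your explicit telescoping is exactly the ``simple calculation'' the paper defers to a reference.
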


Let us remark that the formula above is a refinement of a previously known
formula for the generating function
\[
G(x,y)=F(x,y,1)=\sum_{M\in\M} x^{\w(M)} y^{\lc(M)},
\]
which also corresponds to refined enumeration of interval orders with respect
to their number of maximal elements (see~\cite[sequence A175579]{oeis}). 
Formulas for $G(x,y)$ have been obtained in different contexts by several 
authors (namely Zagier~\cite{Zagier}, Kitaev and
Remmel~\cite{KitaevRemmel}, Yan~\cite{Yan}, Levande~\cite{Levande},  Andrews
and Jelínek~\cite{AJ}), and there are now three known expressions for this
generating function:
\begin{align*}
 G(x,y)&=\sum_{n\ge 0} \frac{xy}{1-xy}\poch{1-x}{1-x}{n}&
\text{\cite{KitaevRemmel}}\\
&=\sum_{n\ge 1} \poch{1-xy}{1-x}{n} &\text{\cite{Levande, Yan, Zagier}}\\
&=-1\!+\!\sum_{n\ge 0}
pq^n\poch{p}{q}{n}\poch{q}{q}{n} \text{with }p=\frac{1}{1-xy},\, 
q=\frac{1}{1-x}.&\text{\cite{AJ}}
\end{align*}
The second of these three expressions can be deduced from
Proposition~\ref{pro-fish2} by the substitution $z=1$ and a simple manipulation
of the summands. Other authors have derived formulas for generating functions of
Fishburn matrices (or equivalently, interval orders) refined with respect to
various statistics~\cite{BMCDK, indistin, SelfDual}, but to our knowledge, none
of them has considered the statistic~$\nne(M)$.

\begin{proof}[Proof of Proposition~\ref{pro-fish2}]
Our proof is an adaptation of the approach from our previous
paper~\cite{SelfDual}. We first focus on primitive Fishburn matrices. Let
$\cP_k$ be the set of primitive $k$-by-$k$ Fishburn matrices, and let
$\cP=\bigcup_{k\ge 1} \cP_k$. Define auxiliary generating functions
\begin{align*}
 P_k(x,y,z)&=\sum_{M\in\cP_k} x^{\pc(M)} y^{\lc(M)-1} z^{\nne(M)}, \text{ and}\\
 P(x,y,z)&=\sum_{k\ge 1} P_k(x,y,z).
\end{align*}
Since $\lc(M)$ is positive for every $M\in\cP$, the exponent of the factor
$y^{\lc(M)-1}$ in $P_k$ is nonnegative. The idea behind subtracting $1$ from
the exponent is that $y$ now only counts the 1-cells in the last column that
are not wNE-cells, while the unique wNE-cell of the last column is counted
by the variable $z$ only. Note that the wNE-cells of the previous columns
contribute to $x$ as well as to~$z$.

Consider a matrix $M\in \cP_k$, with $\w(M)=n$, $\lc(M)=m$ and $\nne(M)=\ell$.
This matrix contributes with the summand $x^{n-m} y^{m-1} z^\ell$
into~$P_k(x,y,z)$; let us call $x^{n-m} y^{m-1} z^\ell$ the \emph{value} of~$M$.
Let us determine the total value of the matrices obtained by extending~$M$. 
Let $c$ be the topmost 1-cell in the last column of~$M$. If $M'$ is an extension
of $M$, then $\nne(M')=\nne(M)+1$ if $c$ was ignored by the extension, and
$\nne(M')=\nne(M)$ otherwise. Any other 1-cell in the last column may be
ignored, duplicated or shifted, contributing respectively a factor $x$, or $xy$,
or $y$ into the value of $M'$. It follows that the total value of all the
matrices obtained by a valid extension of $M$ is equal to
\begin{multline*}
 xz (x^{n-m}(x+y+xy)^{m-1} z^{\ell}) + xy (x^{n-m}(x+y+xy)^{m-1} z^{\ell})\\ + y
(x^{n-m}(x+y+xy)^{m-1} z^{\ell}) - y x^{n-m}y^{m-1} z^{\ell},
\end{multline*}
where the first three summands are respectively the values of the matrices in
which the cell $c$ has been ignored, duplicated and shifted, and the final
subtracted term is the value of the matrix obtained by the invalid extension.
Note that an extension $M'$ also includes a 1-cell in the bottom-right corner,
which contributes a factor of $y$ into the value of $M'$, but the effect of
this extra factor is cancelled by the fact that the topmost 1-cell in the last
column of $M'$ does not contribute any factor of $y$ into the value of~$M'$.

Summing over all $M\in\cP_k$, and recalling that every matrix in $\cP_{k+1}$
may be uniquely obtained as a valid extension of a matrix in $\cP_k$, we deduce
that
\[
 P_{k+1}(x,y,z)=(xz+xy+y)P_k(x,x+y+xy,z)-yP_k(x,y,z).
\]
Summing this identity over all $k\ge 1$ and noting that $P_1(x,y,z)=z$, we
see that
\begin{align*}
 P(x,y,z)-z &= (xz+xy+y)P(x,x+y+xy,z) - yP(x,y,z), \text{ or equivalently,}\\
P(x,y,z) &= \frac{z}{1+y} + \frac{xz+xy+y}{1+y} P(x,x+y+xy,z). 
\end{align*}
From this functional equation, by a simple calculation 
(analogous to~\cite[proof of Theorem 2.1]{SelfDual}),
we obtain the formula
\begin{align}
 P(x,y,z)&=\sum_{n\ge 0} \frac{z}{(1+x)^n(1+y)}\prod_{i=0}^{n-1} \frac{ 
(1+x)^{i+1}(1+y)-1-x+xz}{(1+x)^i(1+y)}\notag\\
&=\frac{z}{1+y}\sum_{n\ge 0}\poch{\frac{1+x-xz}{(1+x)(1+y)}}{\frac{1}{1+x}}{n}.
\label{eq-p}
\end{align}
Any Fishburn matrix may be uniquely obtained by inflating a primitive
Fishburn matrix, which corresponds to the identity
\[
 F(x,y,z)=\frac{xy}{1-xy}P\left(\frac{x}{1-x},\frac{xy}{1-xy},z\right),
\]
where the factor $\frac{xy}{1-xy}$ on the right-hand side corresponds to the
contribution of the topmost 1-cell in the last column. Substituting
into~\eqref{eq-p} gives
\begin{align*}
 F(x,y,z)&= \frac{xy}{1-xy}\cdot\frac{z}{1+\frac{xy}{1-xy}}\sum_{n\ge 0} 
\poch{\frac{1+\frac{x}{1-x}-z\frac{x}{1-x}}{\left(1+\frac{x}{1-x}
\right)\left(1+\frac{xy}{1-xy}\right)}}{\frac{1}{1+\frac{x}{1-x}}}{\!n}\\
&=xyz\sum_{n\ge 0}\poch{\vphantom{\big(}(1-xy)(1-xz)}{1-x}{n}.
\end{align*}
This completes the proof of Proposition~\ref{pro-fish2}, and of
Theorem~\ref{thm-fish2}.
\end{proof}

\section{Further Remarks and Open Problems}

\paragraph{Diagonal-free fillings of polyominoes.} We have seen in
Lemma~\ref{lem-swse} that Fishburn matrices with no two positive cells in
strictly SW position correspond to \tpto-free posets, while those with no two
positive cells in strictly SE position correspond to \tptn-free posets. Both
classes are known to be enumerated by Catalan numbers, and in particular,
the two types of matrices are equinumerous. This can be seen as a very special
case of symmetry between fillings of polyomino shapes avoiding increasing or
decreasing chains. 

To be more specific, a $k$-by-$k$ Fishburn matrix (or indeed, any
upper-triangular $k$-by-$k$ matrix) can also be represented as a
right-justified array of boxes with $k$ rows of lengths $k, k-1, k-2,\dotsc, 1$,
where each box is filled by a nonnegative integer. Such arrays of integers are a
special case of the so-called \emph{fillings of polyominoes}, which are a rich 
area of study. 

Given a Fishburn matrix $M$, we say that a $m$-tuple of nonzero cells
$c_1,\dotsc,c_m$ forms an \emph{increasing chain} (or \emph{decreasing chain})
if for each $i<j$, the cell $c_j$ is strictly NE from $c_i$ (strictly SE from
$c_i$, respectively). It follows from general results on polyomino fillings
(e.g. from a theorem of Rubey~\cite[Theorem 5.3]{Rubey}) that there is a
bijection which maps Fishburn matrices avoiding an increasing chain of length
$m$ to those that avoid a decreasing chain of length $m$, and the bijection
preserves the number of rows, as well as the weight of every row and column. 

We have seen that certain equidistribution results related to Fishburn matrices
without increasing chains of length 2 can be extended to general Fishburn
matrices. It might be worthwhile to look at Fishburn matrices avoiding chains of
a given fixed length $m>2$, and see if these matrices exhibit similar kinds of
equidistribution.

\paragraph{F-triples for other objects.} Disanto et al.~\cite{DisantoAAM}
have shown that many Catalan-enumerated objects, such as Dyck paths,
312-avoiding permutations, non-crossing partitions or non-crossing matchings,
admit a natural bijective encoding into C1-pairs. For other Catalan objects,
e.g. non-nesting matchings, an encoding into C2-pairs is easier to obtain.

Since F1-triples and F2-triples are a direct generalization of C1-pairs and
C2-pairs, it is natural to ask which Fishburn-enumerated objects admit an easy
encoding into such triples. We have seen that such an encoding exists for
interval orders, as well as for Fishburn matrices. It is also easy to
describe an F1-triple structure on non-neighbor-nesting
matchings~\cite{Stoimenow,CL}, since those are closely related to Fishburn
matrices.

A more challenging problem is to find an F-triple structure on ascent
sequences. Unlike other Fishburn objects, ascent sequences do not exhibit any
obvious `trivial involution' that would be analogous to duality of
interval orders or transposition of Fishburn matrices. 

\begin{problem}
Find a natural way to encode an ascent sequence into an F-triple. 
\end{problem}
There are several results showing that certain subsets of ascent sequences are
enumerated by Catalan numbers (see e.g. Duncan and Steingrímsson~\cite{DS},
Callan et al.~\cite{CMS}, or Mansour and Shattuck~\cite{Mansour}). It might be a
good idea to first try to encode those Catalan-enumerated classes into C1-pairs
or C2-pairs.

Another Fishburn-enumerated family for which we cannot find an F-triple encoding
is the family of $\ppat$-avoiding permutations. We say that a permutation
$\pi=\pi_1\dotsb\pi_n$ is $\ppat$-avoiding, if there are no three indices
$i,j,k$ such that $i+1=j<k$ and $\pi_i+1=\pi_k<\pi_j$.
Parviainen~\cite{bivincular} has shown that $\ppat$-avoiding permutations are
enumerated by Fishburn numbers. 

Numerical evidence suggests that there is a close connection between the
statistics of Fishburn matrices expressible in terms of F-triples, and certain
natural statistics on $\ppat$-avoiding permutations. In a permutation
$\pi=\pi_1\pi_2\dotsb\pi_n$, an element $\pi_i$ is a \emph{left-to-right
maximum} (or \emph{LR-maximum}) if $\pi_i$ is larger than any element among
$\pi_1,\dotsc,\pi_{i-1}$. Let $\LRmax(\pi)$ denote the number of LR-maxima
of~$\pi$. Analogously, we define LR-minima, RL-maxima and RL-minima of $\pi$,
their number being denoted by $\LRmin(\pi)$, $\RLmax(\pi)$ and $\RLmin(\pi)$,
respectively. Let $\Av_n(\pat)$ be the set of $\ppat$-avoiding permutations of
size~$n$. Observe that for a $\ppat$-avoiding permutation $\pi$, its composition
inverse $\pi^{-1}$ avoids $\ppat$ as well. Let $\M_n$ be the set of
Fishburn matrices of weight~$n$.

\begin{conjecture}\label{con-pat1} For every $n$, there is a bijection 
$\phi\colon \Av_n(\pat) \to \M_n$ with these properties:
\begin{itemize}
 \item $\LRmax(\pi)$ is the weight of the first row of $\phi(\pi)$,
 \item $\RLmin(\pi)$ is the weight of the last column of $\phi(\pi)$,
 \item $\RLmax(\pi)$ is the number of wNE-cells of $\phi(\pi)$,
 \item $\LRmin(\pi)$ is the number of positive cells of $\phi(\pi)$ belonging to
the main diagonal, and
 \item $\phi(\pi^{-1})$ is obtained from $\phi(\pi)$ by transpose along the
North-East diagonal. 
\end{itemize}
\end{conjecture}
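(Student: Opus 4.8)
The statement is a conjecture, so what I can offer is a plan of attack rather than a finished argument; the fact that it remains open already hints at where the difficulty lies. The natural strategy is to build $\phi$ recursively, mirroring on the permutation side the extension-and-inflation structure of Fishburn matrices that drove the proofs of Theorem~\ref{thm-fish1} and Proposition~\ref{pro-fish2}. Concretely, removing the last column of a Fishburn matrix $M$ (undoing an inflation and then an extension) should correspond to a canonical reduction $\pi\mapsto\pi'$ on $\Av_n(\pat)$ that deletes the entries recorded in that column, and the three extension moves (duplicate, shift, ignore), together with the inflation, would prescribe how a block of new entries is spliced into $\pi'$ to recover $\pi$. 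The first task is therefore to pin down this reduction and its inverse insertion rule. I would begin by computing $\phi$ by hand for small $n$, using the five constraints to force the images, and then reverse-engineer the insertion rule and prove it well defined and bijective; since $\pat$ is a \emph{bivincular} pattern, avoidance is position-sensitive, so one must show that the admissible insertion positions at each step are exactly those preserving $\pat$-avoidance.

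Granting such a recursion, the four numerical properties would be verified inductively by tracking how each statistic changes under a single extension/inflation step, exactly as the column-by-column analysis of Observations~\ref{obs-extreme} and~\ref{obs-extend} tracks sNE-, wNE-, sSE- and wSE-cells. The plan is to match $\LRmax(\pi)$ with the first-row weight and $\RLmin(\pi)$ with the last-column weight first; these are the ``outermost'' statistics and should be the more tractable, since each newly inserted block ought to contribute to at most one of them. One then matches $\RLmax(\pi)$ with the number of wNE-cells and $\LRmin(\pi)$ with the number of positive diagonal cells. The latter two are the delicate ones: by Observation~\ref{obs-extend} a wNE-cell is created only under very specific moves, and a positive diagonal cell corresponds to a degenerate interval $[i,i]$, so I would need to identify precisely which insertion patterns produce a new right-to-left maximum, respectively a new left-to-right minimum, and show these coincide with the matrix events.

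The structural heart of the conjecture, and what I expect to be the main obstacle, is property~(5): that $\phi$ intertwines composition inverse on permutations with transpose along the NE diagonal on matrices. These are the two natural involutions in play, inverse being the obvious involution on $\pat$-avoiding permutations and NE-transpose being exactly interval-order duality, so the correct formulation is that $\phi$ should be \emph{equivariant} for them. Encouragingly, the five properties are already consistent with such equivariance: one checks that NE-transpose swaps the first-row weight with the last-column weight while fixing the counts of wNE-cells and of positive diagonal cells, whereas composition inverse swaps $\LRmax$ with $\RLmin$ while fixing $\LRmin$ and $\RLmax$, and the two pictures agree coordinate by coordinate. The hard part will be forcing a single recursive construction to be simultaneously bijective, statistic-preserving in all four coordinates, and equivariant; equivariance in particular demands that the insertion rule be symmetric under the duality, which sits uneasily with the inherently asymmetric ``delete the last column'' recursion and is the step I expect to need a genuinely new idea. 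As a weaker but safer fallback that would at least settle the numerical content, I would compute the joint generating function of first-row weight, last-column weight, number of wNE-cells and number of positive diagonal cells over $\M_n$ by the same extension-and-inflation bookkeeping used for Proposition~\ref{pro-fish2}, compute the corresponding four-variable generating function over $\Av_n(\pat)$ by a scanning-element or kernel-method recursion, and check that they coincide; this route, however, cannot by itself produce the equivariant bijection $\phi$ that the conjecture actually demands.
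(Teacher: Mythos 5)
This statement is explicitly a \emph{conjecture} in the paper: the author offers only numerical evidence and no proof, so there is no argument of the paper's to compare yours against. Your submission is, as you yourself say, a research plan rather than a proof, and it does not close the gap. What is missing is essentially everything that would constitute the mathematical content: you never define the reduction $\pi\mapsto\pi'$ on $\Av_n(\pat)$ that is supposed to mirror deleting the last column, never give the insertion rule inverse to it, never prove that the admissible insertions are exactly the $\ppat$-avoidance-preserving ones, and never verify even one of the five listed properties for the (undefined) map. The one step you identify as the crux --- making the recursion equivariant under composition inverse and NE-transpose despite the asymmetry of a ``peel off the last column'' decomposition --- is precisely the step you defer to ``a genuinely new idea,'' so the proposal cannot be counted as resolving the conjecture.

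That said, the parts you do carry out are correct and worth recording. Your consistency check of property (5) against properties (1)--(4) is right: NE-transpose of a Fishburn matrix exchanges the weight of the first row with the weight of the last column and fixes both the number of wNE-cells and the number of positive diagonal cells, while $\pi\mapsto\pi^{-1}$ exchanges $\LRmax$ with $\RLmin$ and fixes $\LRmin$ and $\RLmax$, so the five conditions are mutually compatible with equivariance. Your proposed fallback --- matching the four-variable generating functions over $\M_n$ and over $\Av_n(\pat)$ by the extension-and-inflation bookkeeping of Proposition~\ref{pro-fish2} on one side and a scanning/kernel recursion on the other --- would, if executed, establish the equidistribution content (and in particular Conjecture~\ref{con-pat2} via Theorem~\ref{thm-fish2}), but as you note it would not produce the equivariant bijection $\phi$ that the conjecture asserts. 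As it stands, the conjecture remains open.
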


Together with Theorem~\ref{thm-fish2}, Conjecture~\ref{con-pat1} implies the
following weaker conjecture.

\begin{conjecture}\label{con-pat2}
 For any $n$, $\LRmax$ and $\RLmax$ have symmetric joint
distribution on~$\Av_n(\pat)$.
\end{conjecture}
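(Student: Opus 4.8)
The plan is to take Conjecture~\ref{con-pat1} as given and deduce Conjecture~\ref{con-pat2} from it together with the already-proved Theorem~\ref{thm-fish2}; the genuinely hard content lives in Conjecture~\ref{con-pat1} (constructing the statistic-preserving bijection), and once that is granted the rest is bookkeeping. Fix $n$ and let $\phi\colon\Av_n(\pat)\to\M_n$ be the bijection of Conjecture~\ref{con-pat1}. For a Fishburn matrix $M$ write $\mathrm{fr}(M)$ for the weight of its first row, $\mathrm{lc}(M)$ for the weight of its last column, and $\mathrm{ne}(M)$ for its number of wNE-cells. By the first and third bullets of Conjecture~\ref{con-pat1} we have $\LRmax(\pi)=\mathrm{fr}(\phi(\pi))$ and $\RLmax(\pi)=\mathrm{ne}(\phi(\pi))$ for every $\pi$. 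Since $\phi$ is a bijection, the pair $(\LRmax,\RLmax)$ on $\Av_n(\pat)$ has exactly the joint distribution of $(\mathrm{fr},\mathrm{ne})$ on $\M_n$, so it suffices to show that $\mathrm{fr}$ and $\mathrm{ne}$ are symmetrically jointly distributed over $\M_n$.

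Next I would invoke Observation~\ref{obs-extreme}, which identifies $\mmax{S_1}$ with $\mathrm{ne}$ and $\mmax{R}$ with $\mathrm{lc}$; hence Theorem~\ref{thm-fish2} is precisely the statement that $(\mathrm{ne},\mathrm{lc})$ has symmetric joint distribution over $\M_n$. What remains is to trade $\mathrm{lc}$ for $\mathrm{fr}$, and for this I would use the anti-transpose $\tau$ that reflects an upper-triangular $m\times m$ matrix across its North-East diagonal, sending cell $(i,j)$ to $(m+1-j,\,m+1-i)$. This is a value-preserving involution on $\M_n$ realizing the duality $R\mapsto R^{-1}$ of interval orders, and by the last bullet of Conjecture~\ref{con-pat1} it corresponds on the permutation side to inversion $\pi\mapsto\pi^{-1}$. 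The map $\tau$ carries the last column onto the first row, so $\mathrm{fr}(\tau M)=\mathrm{lc}(M)$; and a direct check from the coordinate formula shows that $\tau$ preserves the weakly-NE relation between cells, so that $\mathrm{ne}(\tau M)=\mathrm{ne}(M)$.

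Putting the pieces together, $\tau$ carries the pair $(\mathrm{fr},\mathrm{ne})$ to $(\mathrm{lc},\mathrm{ne})$, so these two statistic pairs are equidistributed over $\M_n$; since $(\mathrm{lc},\mathrm{ne})$ is symmetric by Theorem~\ref{thm-fish2}, the pair $(\mathrm{fr},\mathrm{ne})$ is symmetric as well, and pulling back through $\phi$ gives the symmetric joint distribution of $(\LRmax,\RLmax)$ on $\Av_n(\pat)$. I expect the only real obstacle to be Conjecture~\ref{con-pat1} itself: once the bijection $\phi$ with its prescribed statistic behavior is available, the argument reduces to the elementary fact that the anti-transpose swaps first-row and last-column weights while fixing the wNE-count, combined with Theorem~\ref{thm-fish2}. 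Absent $\phi$, a self-contained proof would instead have to exhibit directly an involution on $\Av_n(\pat)$ interchanging $\LRmax$ and $\RLmax$, and constructing such an involution on $\ppat$-avoiding permutations is the step I would expect to be genuinely difficult.
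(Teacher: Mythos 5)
Your conditional derivation is correct and matches the paper's intent exactly: the paper itself only asserts in one sentence that Conjecture~\ref{con-pat2} follows from Conjecture~\ref{con-pat1} together with Theorem~\ref{thm-fish2}, and your argument (translate $(\LRmax,\RLmax)$ to (first-row weight, wNE-count) via $\phi$, use the anti-transpose to trade first row for last column while fixing the wNE-count, then apply Theorem~\ref{thm-fish2}) is a correct filling-in of that implication. As you note, the statement remains a conjecture only because Conjecture~\ref{con-pat1} is unproven.
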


\paragraph{Primitive Fishburn matrices and Motzkin numbers.} Recall that an
interval order is primitive if it has no two indistinguishable elements.
Primitive interval orders are encoded by primitive Fishburn matrices which are
Fishburn matrices whose every entry is equal to 0 or 1. Primitive interval
orders (and therefore also primitive Fishburn matrices) were enumerated by Dukes
et al.~\cite{indistin}, see also~\cite[sequence A138265]{oeis}. As we pointed
out before, every Fishburn matrix can be uniquely obtained from a primitive
Fishburn matrix by inflation. Thus, knowing the enumeration of primitive objects
we may obtain the enumeration of general Fishburn objects and vice versa.

In particular, with the help of Lemma~\ref{lem-swse}, we easily deduce that the
number of primitive \tpto-free interval orders of size $n$, as well as the
number of primitive \tptn-free interval orders of size $n$ is the $(n-1)$-th
Motzkin number (see~\cite[A001006]{oeis}). Maybe this connection between Motzkin
objects and primitive Fishburn objects will reveal further combinatorial
results, in the same way the connection between Catalan objects and Fishburn
objects inspired Theorems~\ref{thm-fish1} and~\ref{thm-fish2}.

\paragraph{Tamari-like lattices.} Disanto et al.~\cite{Disanto_TamariArxiv}
defined a lattice structure on the set of interval orders of a given size, and
showed that the restriction of this lattice to the set of \tptn-free posets
yields the well-known Tamari lattice of Catalan objects~\cite{FriedmanTamari,
HuangTamari}. They also gave a simple description of the Tamari lattice on
\tptn-free posets~\cite{tamari}.

It might be worth exploring whether the lattice structure introduced by
Disanto et al. for interval orders admits an easy interpretation in terms of
Fishburn triples or Fishburn matrices. It might also be worthwhile to look
at the restriction of this lattice to \tpto-free posets.

\paragraph{The missing bijection.} For Theorem~\ref{thm-fish2}, we could provide
a proof by a generating function argument, but not a direct bijective proof.
By finding a bijective proof, or even better, a proof that generalizes
Deutsch's~\cite{DeutschInvol} involution on Catalan objects, we might, e.g.,
gain more insight into the statistic $\mmin{S_1}$, which generalizes the
Narayana-distributed statistic $\pea$ on Dyck paths. 
\begin{problem}
 Prove Theorem~\ref{thm-fish2} bijectively.
\end{problem}

{
\setlength{\bibsep}{0.0pt}
\small
\bibliographystyle{plain}
\bibliography{intord}
}
\end{document}